\begin{document}
\title{Zipping Tate resolutions and exterior coalgebras} 
\author{Gunnar Fl{\o}ystad}
\address{Matematisk Institutt\\
         Johs. Brunsgt. 12\\ 
        5008 Bergen} 
\email{gunnar@mi.uib.no}

\keywords{Tate resolution, hypercohomology, coherent sheaves, Boij-S\"oderberg
theory, triplets, squarefree modules}
\subjclass[2010]{Primary: 14F05, 13D02; Secondary: 13F55}
\date{\today}


\theoremstyle{plain}
\newtheorem{theorem}{Theorem}[section]
\newtheorem{corollary}[theorem]{Corollary}
\newtheorem*{main}{Main Theorem}
\newtheorem{lemma}[theorem]{Lemma}
\newtheorem{proposition}[theorem]{Proposition}
\newtheorem{conjecture}[theorem]{Conjecture}
\newtheorem*{theoremA}{Theorem A}
\newtheorem*{theoremB}{Theorem B}

\theoremstyle{definition}
\newtheorem{definition}[theorem]{Definition}
\newtheorem{fact}[theorem]{Fact}

\theoremstyle{remark}
\newtheorem{notation}[theorem]{Notation}
\newtheorem{remark}[theorem]{Remark}
\newtheorem{example}[theorem]{Example}
\newtheorem{claim}{Claim}

\newcommand{\llabel}{\addtocounter{theorem}{-1}
\refstepcounter{theorem} \label}

\newcommand{\psp}[1]{{{\bf P}^{#1}}}
\newcommand{\psr}[1]{{\bf P}(#1)}
\newcommand{\op}{{\mathcal O}}
\newcommand{\opw}{\op_{\psr{W}}}
\newcommand{\go}{\op}

\newcommand{\ini}[1]{\text{in}(#1)}
\newcommand{\gin}[1]{\text{gin}(#1)}
\newcommand{\kr}{{\Bbbk}}
\newcommand{\kk}{{\Bbbk}}
\newcommand{\pd}{\partial}
\newcommand{\vardel}{\partial}
\renewcommand{\tt}{{\bf t}}


\newcommand{\cohpW}{{{\text{{\rm coh}}\, \pW }}}


\newcommand{\modv}[1]{{#1}\text{-{mod}}}
\newcommand{\modstab}[1]{{#1}-\underline{\text{mod}}}
\newcommand{\sq}{\text{{sqf-}}S}
\newcommand{\sqf}{\text{{freesqf-}}S}

\newcommand{\sut}{{}^{\tau}}
\newcommand{\sumit}{{}^{-\tau}}
\newcommand{\til}{\thicksim}

\newcommand{\totp}{\text{Tot}^{\prod}}
\newcommand{\dsum}{\bigoplus}
\newcommand{\dprod}{\prod}
\newcommand{\lsum}{\oplus}
\newcommand{\lprod}{\Pi}

\newcommand{\La}{{\Lambda}}
\newcommand{\lam}{{\lambda}}
\newcommand{\GL}{{GL}}

\newcommand{\sirstj}{\circledast}

\newcommand{\she}{\EuScript{S}\text{h}}
\newcommand{\cm}{\EuScript{CM}}
\newcommand{\cmd}{\EuScript{CM}^\dagger}
\newcommand{\cmri}{\EuScript{CM}^\circ}
\newcommand{\cler}{\EuScript{CL}}
\newcommand{\clerd}{\EuScript{CL}^\dagger}
\newcommand{\clerri}{\EuScript{CL}^\circ}
\newcommand{\gor}{\EuScript{G}}
\newcommand{\gF}{\mathcal{F}}
\newcommand{\gG}{\mathcal{G}}
\newcommand{\gM}{\mathcal{M}}
\newcommand{\gE}{\mathcal{E}}
\newcommand{\gD}{\mathcal{D}}
\newcommand{\gI}{\mathcal{I}}
\newcommand{\gP}{\mathcal{P}}
\newcommand{\gK}{\mathcal{K}}
\newcommand{\gL}{\mathcal{L}}
\newcommand{\gS}{\mathcal{S}}
\newcommand{\gC}{\mathcal{C}}
\newcommand{\gO}{\mathcal{O}}
\newcommand{\gJ}{\mathcal{J}}
\newcommand{\gU}{\mathcal{U}}
\newcommand{\gR}{\mathcal{R}}
\newcommand{\gX}{\mathcal{X}}
\newcommand{\mm}{\mathfrak{m}}
\newcommand{\gob}{\gO_{{\mathbb P}(B)}}

\newcommand{\dlim} {\varinjlim}
\newcommand{\ilim} {\varprojlim}

\newcommand{\CM}{\text{CM}}
\newcommand{\Mon}{\text{Mon}}


\newcommand{\Kom}{\text{Kom}}


\newcommand{\EH}{{\mathbf H}}
\newcommand{\res}{\text{res}}
\newcommand{\Hom}{\text{Hom}}
\newcommand{\tHom}{{\text{\rm Hom}}}
\newcommand{\inhom}{{\underline{\text{Hom}}}}
\newcommand{\Ext}{\text{Ext}}
\newcommand{\Tor}{\text{Tor}}
\newcommand{\ghom}{\mathcal{H}om}
\newcommand{\gext}{\mathcal{E}xt}
\newcommand{\id}{\text{{id}}}
\newcommand{\im}{\text{im}\,}
\newcommand{\codim} {\text{codim}\,}
\newcommand{\resol}{\text{resol}\,}
\newcommand{\rank}{\text{rank}\,}
\newcommand{\lpd}{\text{lpd}\,}
\newcommand{\coker}{\text{coker}\,}
\newcommand{\supp}{\text{supp}\,}
\newcommand{\Ad}{A_\cdot}
\newcommand{\Bd}{B_\cdot}
\newcommand{\Fd}{F_\cdot}
\newcommand{\Gd}{G_\cdot}


\newcommand{\sus}{\subseteq}
\newcommand{\sups}{\supseteq}
\newcommand{\pil}{\rightarrow}
\newcommand{\vpil}{\leftarrow}
\newcommand{\rpil}{\leftarrow}
\newcommand{\lpil}{\longrightarrow}
\newcommand{\inpil}{\hookrightarrow}
\newcommand{\pils}{\twoheadrightarrow}
\newcommand{\projpil}{\dashrightarrow}
\newcommand{\dotpil}{\dashrightarrow}
\newcommand{\adj}[2]{\overset{#1}{\underset{#2}{\rightleftarrows}}}
\newcommand{\mto}[1]{\stackrel{#1}\longrightarrow}
\newcommand{\vmto}[1]{\overset{\tiny{#1}}{\longleftarrow}}
\newcommand{\mtoelm}[1]{\stackrel{#1}\mapsto}

\newcommand{\eqv}{\Leftrightarrow}
\newcommand{\impl}{\Rightarrow}

\newcommand{\iso}{\cong}
\newcommand{\te}{\otimes}
\newcommand{\sqte}{\boxtimes}
\newcommand{\into}[1]{\hookrightarrow{#1}}
\newcommand{\ekv}{\Leftrightarrow}
\newcommand{\equi}{\simeq}
\newcommand{\isopil}{\overset{\cong}{\lpil}}
\newcommand{\equipil}{\overset{\equi}{\lpil}}
\newcommand{\ispil}{\isopil}
\newcommand{\vvi}{\langle}
\newcommand{\hvi}{\rangle}
\newcommand{\susneq}{\subsetneq}
\newcommand{\sgn}{\text{sign}}


\newcommand{\xd}{\check{x}}
\newcommand{\ortog}{\bot}
\newcommand{\tL}{\tilde{L}}
\newcommand{\tM}{\tilde{M}}
\newcommand{\tH}{\tilde{H}}
\newcommand{\tD}{\tilde{D}}
\newcommand{\tvH}{\widetilde{H}}
\newcommand{\tvh}{\widetilde{h}}
\newcommand{\tV}{\tilde{V}}
\newcommand{\tS}{\tilde{S}}
\newcommand{\tT}{\tilde{T}}
\newcommand{\tR}{\tilde{R}}
\newcommand{\tf}{\tilde{f}}
\newcommand{\ts}{\tilde{s}}
\newcommand{\tp}{\tilde{p}}
\newcommand{\tr}{\tilde{r}}
\newcommand{\tfst}{\tilde{f}_*}
\newcommand{\empt}{\emptyset}
\newcommand{\bfa}{{\bf a}}
\newcommand{\bfb}{{\bf b}}
\newcommand{\bfd}{{\bf d}}
\newcommand{\bfe}{{\bf e}}
\newcommand{\bfp}{{\bf p}}
\newcommand{\bfc}{{\bf c}}
\newcommand{\bfl}{{\bf \ell}}
\newcommand{\la}{\lambda}
\newcommand{\bfen}{{\mathbf 1}}
\newcommand{\ep}{\epsilon}
\newcommand{\en}{r}
\newcommand{\tu}{s}

\newcommand{\ome}{\omega_E}

\newcommand{\bevis}{{\bf Proof. }}
\newcommand{\demofin}{\qed \vskip 3.5mm}
\newcommand{\nyp}[1]{\noindent {\bf (#1)}}
\newcommand{\demo}{{\it Proof. }}
\newcommand{\demodone}{\demofin}
\newcommand{\parg}{{\vskip 2mm \addtocounter{theorem}{1}  
                   \noindent {\bf \thetheorem .} \hskip 1.5mm }}

\newcommand{\lcm}{{\text{lcm}}}


\newcommand{\dl}{\Delta}
\newcommand{\cdel}{{C\Delta}}
\newcommand{\cdelp}{{C\Delta^{\prime}}}
\newcommand{\dlst}{\Delta^*}
\newcommand{\Sdl}{{\mathcal S}_{\dl}}
\newcommand{\lk}{\text{lk}}
\newcommand{\lkd}{\lk_\Delta}
\newcommand{\lkp}[2]{\lk_{#1} {#2}}
\newcommand{\del}{\Delta}
\newcommand{\delr}{\Delta_{-R}}
\newcommand{\dd}{{\dim \del}}

\renewcommand{\aa}{{\bf a}}
\newcommand{\bb}{{\bf b}}
\newcommand{\cc}{{\bf c}}
\newcommand{\xx}{{\bf x}}
\newcommand{\yy}{{\bf y}}
\newcommand{\zz}{{\bf z}}
\newcommand{\mv}{{\xx^{\aa_v}}}
\newcommand{\mF}{{\xx^{\aa_F}}}

\newcommand{\pnm}{{\bf P}^{n-1}}
\newcommand{\opnm}{{\go_{\pnm}}}
\newcommand{\ompnm}{\omega_{\pnm}}

\newcommand{\pn}{{\bf P}^n}
\newcommand{\hele}{{\mathbb Z}}
\newcommand{\nat}{{\mathbb N}}
\newcommand{\rasj}{{\mathbb Q}}

\newcommand{\st}{\hskip 0.5mm {}^{\rule{0.4pt}{1.5mm}}}              
\newcommand{\disk}{\scriptscriptstyle{\bullet}}
\newcommand{\disc}{\circle*{5}}
\newcommand{\dt}{{\bullet}}

\newcommand{\cF}{F_\dt}
\newcommand{\pol}{f}

\newcommand{\pW}{{\mathbb P}(W)}
\newcommand{\pB}{{\mathbb P}(W)}
\newcommand{\pWp}{{\mathbb P}(\Wp)}
\newcommand{\PS}[1]{{\mathbb P}^{#1}}
\newcommand{\pto}{\PS{2}}
\newcommand{\ptre}{\PS{3}}
\newcommand{\pfire}{\PS{4}}
\newcommand{\fri}[2]{{\mathbb F}_{#1}(#2)}
\newcommand{\friTo}[1]{{\mathbb F}_{#1}}
\newcommand{\frip}[2]{{\mathbb F}^\prime_{#1}(#2)}
\newcommand{\cohB}{coh/ \pB}
\newcommand{\opW}{{{\mathcal O}_{\pW}}}
\newcommand{\ompW}{{{\omega}_{\pW}}}
\newcommand{\opB}{{\mathcal O}_{\pW}}
\newcommand{\gQ}{{\mathcal Q}}
\newcommand{\Vbb}{{\mathbb V}}
\newcommand{\Spec}{{\scriptstyle\mbox{Spec}}}
\newcommand{\vb}{\mbox{vb}}
\newcommand{\coh}{\mbox{coh}}
\newcommand{\Sym}{\text{\rm Sym}}
\newcommand{\Supp}{\text{\rm Supp}}
\newcommand{\SqFree}{\text{\rm SqFree}} 
\newcommand{\las}[2]{{\mathbb L}_{#1}(#2)}
\newcommand{\lasTo}[1]{{\mathbb L}_{#1}}
\newcommand{\funS}[2]{{\mathbb S}_{#1}(#2)}
\newcommand{\funTo}[1]{{{\mathbb W}}^{#1}}
\newcommand{\funL}{\mathbf L}
\newcommand{\funF}{\mathbf F}
\newcommand{\funG}{\mathbf G}
\newcommand{\funFds}{{\mathbf F}^{\oplus}}
\newcommand{\funFext}{{\hat{\mathbf F}}}
\newcommand{\funFht}{\hat{\mathbf F}}
\newcommand{\funGht}{\hat{\mathbf G}}
\newcommand{\Ga}{\Gamma}

\renewcommand{\oe}{\widehat{E}}
\newcommand{\Ep}{E^\prime}
\newcommand{\Sp}{S^\prime}
\newcommand{\Wp}{W^\prime}
\newcommand{\oep}{\widehat{\Ep}}

\newcommand{\Smod}{S\text{-{mod}}}
\newcommand{\Efree}{E\text{-{free}}}
\newcommand{\Sfree}{S\text{-{free}}}
\newcommand{\TT}{{\mathbb T}}
\newcommand{\Hbb}{{\mathbb H}}
\newcommand{\funTht}{\hat{{\mathbb T}}}
\newcommand{\cone}{\mbox{{\rm cone}} \, \, }
\newcommand{\Ktate}{K^{\circ}}

\newcommand{\lb}{\mbox{lb}}
\newcommand{\catL}{\lb/\pB}
\newcommand{\catgL}{{\mathcal L}}
\newcommand{\catLds}{{\mathcal L}^{\oplus}}

\newcommand{\catA}{{\mathcal A}}
\newcommand{\catB}{{\mathcal B}}
\newcommand{\ga}{\gamma}
\newcommand{\cj}[1]{\overline{#1}}
\newcommand{\een}{\mathbf 1}

\newcommand{\Gb}{\overline{G}}
\newcommand{\Fb}{\overline{F}}
\newcommand{\Sb}{\overline{S}}
\newcommand{\Eb}{\overline{E}}

\newcommand{\bR}{{\mathbf R}}
\newcommand{\bfA}{{\mathbf A}}

\newcommand{\db}{{\text{double}}}
\newcommand{\free}{{\text{free}}}
\newcommand{\qcohpw}{{\text{q-coh}/\pW}}
\newcommand{\tot}{{\text{Tot}}}
\newcommand{\SHom}{\mathscr{H}om}

\def\CC{{\mathbb C}}
\def\GG{{\mathbb G}}
\def\ZZ{{\mathbb Z}}
\def\NN{{\mathbb N}}
\def\RR{{\mathbb R}}
\def\OO{{\mathbb O}}
\def\QQ{{\mathbb Q}}
\def\VV{{\mathbb V}}
\def\PP{{\mathbb P}}
\def\EE{{\mathbb E}}
\def\FF{{\mathbb F}}
\def\AA{{\mathbb A}}
\def\HH{{\mathbb H}}
\def\DD{{\mathbb D}}
\def\WW{{\mathbb W}}

\begin{abstract}
We conjecture what the cone of hypercohomology tables
of bounded complexes of coherent sheaves on projective spaces are, when
we have specified regularity conditions on the cohomology
sheaves of this complex and its dual.

  There is an injection from this cone
into the cone of homological data sets of squarefree modules over a polynomial
ring $\kk[x_1, \ldots, x_n]$, and we conjecture that this is an  isomorphism:
The Tate resolutions of a complex of coherent sheaves on projective
space $\PP(W)$, 
and the exterior coalgebra on 
$\langle x_1, \ldots, x_n \rangle$  may
be amalgamated together to form a complex
of free $\Sym(\oplus_i x_i \te W^*)$-modules, a procedure introduced by
Cox and Materov. Via a reduction 
$\oplus_i x_i \te W^* \pil \oplus_i x_i \te \kk$ we get a complex of
free modules over $\kk[x_1, \ldots, x_n]$

The extremal rays in the cone of squarefree complexes are conjecturally given by
triplets of pure free squarefree complexes introduced in \cite{FlTr}.
We describe the corresponding classes of hypercohomology tables, a class
which generalizes vector bundles with supernatural cohomology.

We also show how various pure resolutions in the literature,
like resolutions of modules supported on determinantal varieties, and
tensor complexes, may be obtained by the first part of the procedure.
\end{abstract}

%
%
\maketitle

\section*{Introduction}

What are the possible cohomology tables of coherent sheaves on 
projective spaces, or more generally hypercohomology tables
of bounded complexes of coherent sheaves?
And what are the possible values of 
the homological invariants of complexes of graded
modules over the polynomial rings: Graded Betti numbers and Hilbert functions
of their homology and cohomology modules?
These questions have made considerable advances initiated by the
conjectures of M.Boij and J.S\"oderberg \cite{BS}, and their
subsequent settling by D.Eisenbud and F.-O.Schreyer in \cite {ES}. 
In particular the latter 
achieved the complete classification of all cohomology tables of
vector bundles on projective spaces, up to scalar multiple.
Further advances of notice occurred in \cite{ES3} where they gave
a decomposition of cohomology tables of coherent sheaves, but in 
a non-algorithmic way since it involved an infinite number of steps.
In \cite{BS2} Boij and S\"oderberg classified all Betti diagrams
of graded modules, and
in \cite{EE} D.Eisenbud and D.Erman classify Betti diagrams with 
conditions on the codimension of the homology of the complexes.
For an introduction and survey of this area see \cite{FlIntro}.

\medskip
\noindent{\bf Cone of hypercohomology tables of coherent sheaves.}
A Betti diagram of a graded module is specified by a finite table.
A cohomology table of a coherent sheaf or a bounded complex of coherent
sheaves $\gF^\dt$ on a projective space $\PP(W)$ is  however an infinite table. 
To specify the cohomology table, whose values are
$\dim_\kk \HH^i(\PP(W), \gF^\dt(j))$, 
one notes that its numerical linear strands 
are eventually given by the Hilbert polynomials 
of the cohomology sheaves of the complex when $j \gg 0$ 
and of its derived dual when $j \ll 0$.
Thus one would specify $l \leq r$ such that for twists $l < j < r$ one gives
the finite set of cohomological dimensions, while for $j \geq r$
the cohomology table is given by the Hilbert polynomial of the 
cohomology sheaf $H^i(\gF^\dt)$ in  
degrees $\geq r$, and
for $j \leq l$ it is given by the Hilbert polynomial of the 
cohomology sheaf $H^{-i}((\gF^\dt)^\vee)$ in  degrees $\geq -l$. 

By twisting the complex we may as well assume that $r = 1$ and
now write $l = -n-1$. We give a precise conjecture as to what
the cone of such hypercohomology tables are, with the purely numerical
conditions on the cohomology sheaves $H^i(\gF^\dt)$ and  $H^{-i}((\gF^\dt)^\vee)$
replaced by the natural algebraic conditions that these sheaves are
respectively $1$-regular and $n+1$-regular. (These regularity conditions implies
that the hypercohomology tables of $\gF^\dt$ are given by the Hilbert
polynomials of the cohomology sheaves in these ranges.)

Over the polynomial ring $\kk[x_1, \ldots, x_n]$ there is the category
of $\nat^n$-graded squarefree modules, introduced in \cite{YaSqf}. 
A complex of such modules comes with three homological data,
its graded Betti numbers $B$, the Hilbert functions $H$ of its homology 
sheaves,
and the Hilbert functions $C$ of the homology sheaves of the dual complex.
We show that there is an injection from the cone of hypercohomology tables
to the cone of homological
data sets $(B,H,C)$, and
conjecture that this is an isomorphism, Conjecture \ref{ConeConIso}.
This conjecture is a consequence 
of two conjectures on what are the extremal rays in these
cones, thereby providing the precise description of the cones.
In Conjecture \ref{ConeConExtreme} we conjecture that the
extremal rays of homological data sets are generated by the data
sets that come from triplets of pure free squarefree complexes.
In Conjecture \ref{ConConCx} we propose a corresponding description
of the extremal rays on the hypercohomology table side. 


\medskip
\noindent {\bf Triplets of pure free squarefree modules,}
In a recent paper \cite{FlTr} we introduced the notion of {triplets
of pure free squarefree complexes}. A complex $F_\dt$ of free modules
in the squarefree module category is said to be {\it pure}
if its terms $F_i = S(-d_i)^{\beta_i}$  are generated in a single 
degree $d_i$ when considered as $\hele$-graded modules.
The sequence $(d_0,d_1, \ldots, d_r)$ is its degree sequence.
On the category of free squarefree modules there are two duality 
functors, standard duality $\DD$ and Alexander duality $\AA$. 
The composition functor $\AA \circ \DD$ has order three up to
translation of complexes. (It is the Auslander-Reiten translate on
the derived category of squarefree modules.) If all three complexes
\[ F_\dt, \quad \AA \circ \DD(F_\dt), \quad (\AA \circ \DD)^2(F_\dt) \]
are pure, we say it is a triplet of pure free squarefree complexes.
The degree sequences of these  three complexes is called a degree
triplet.   

\medskip
In \cite{FlTr} we conjectured the existence of such triplets of complexes
for every degree triplet fulfilling certain natural necessary conditions. 
We showed the existence of such triplets of complexes 
provided two of the degree
sequences were intervals. This was done using the tensor complexes
introduced by C.Berkesch et. al. in \cite{BEKSTe}. 
These are complexes over a polynomial ring
$\Sym(V \te W^*)$ where $V = \langle x_1, \ldots, x_n \rangle$. By taking
a suitable quotient $V \te W^* \pil V$ equivariant for the diagonal 
matrices in $\GL(V)$, we obtain a free squarefree complex over $\Sym(V)$
giving rise to such a triplet.

\medskip
This suggests that there may be wider classes of complexes of 
$\Sym(V \te W^*)$-modules which by the same procedure will give more,
possibly all of the triplets of pure free squarefree complexes
whose existence were conjectured in \cite{FlTr}. Here we introduce
such a class, but only as a secondary class associated, via a procedure
we call zipping,  to a
primary class of complexes of coherent sheaves on a projective space
$\pW$. 

\medskip
\noindent{\bf The procedure of zipping.}
This was introduced by D.Cox and E.Materov in a
recent paper \cite{CM}. By \cite{EFS} a 
coherent sheaf $\gF$ on $\pW$, or more generally a complex of 
coherent sheaves $\gF^\dt$, 
\cite{FlDesc},\cite{Coa} corresponds to a Tate resolution
$\TT(\gF^\dt)$ over the exterior algebra $E(W^*) = \oplus \wedge^i W^*$. 
Following \cite{CM} such a Tate resolution and the exterior coalgebra
on a vector space $V$ may be amalgamated together to form 
a complex of free $\Sym(V \te W^*)$-modules, the {\it zip complex}. 
They show in the case that
$\gF^\dt = \gF$ a coherent sheaf, $\text{char}(\kk) = 0$, and $\dim_\kk V \leq
\dim_\kk W-1$ that this corresponds exactly to the complex 
obtained by the method of Lascoux for 
constructing resolutions of sheaves supported on determinantal
varieties. This method starts with a coherent sheaf $\gF$ on $\pW$
usually a vector bundle, and via a pullback and pushdown procedure
gives a complex of free $\Sym(V \te W^*)$-modules.
Here we get rid of the above restrictions in \cite{CM} and show that the
result holds
in arbitrary characteristic,
for complexes of coherent sheaves, and most importantly, without the
bound on $\dim_\kk V$. As a consequence we can show how various old and
recent resolutions in the literature, the Eagon-Northcott complex, 
Buchsbaum-Rim and Buchsbaum-Eisenbud complexes \cite[A2.6]{Ei}, 
pure resolutions of modules supported on determinantal varieties \cite{EFW}, and
tensor complexes \cite{BEKSTe}, may be obtained by the simple
procedure of zipping the Tate resolutions of various vector bundles
with supernatural cohomology on $\pW$,
or more generally locally Cohen-Macaulay sheaves,
with an exterior coalgebra on $V$.

\medskip
\noindent{\bf Complexes of coherent sheaves associated to homology
triplets.}
Our main objective in this paper
is however to introduce the said primary class of complexes of
coherent sheaves on projective spaces. 
To each degree triplet $T$, or an equivalent but
slight
variation thereof $T^\prime$, called a homology triplet, we conjecture
the existence of a complex of coherent sheaves with specified properties
determined by $T^\prime$. The coefficients of the Hilbert polynomial
of this complex fulfills a number of equations which is one less than
the number of unknown coefficients. Hence we expect the Hilbert polynomial
to be unique up to scalar multiple. Our conditions on these complexes
are so strong that we show that each entry of the hypercohomology
table of the complex of coherent sheaves is uniquely determined
by this Hilbert polynomial. But also additional cohomological properties of the 
individual cohomology sheaves of the complex are 
determined. 

When such a complex exists
for a given homology triplet $T^\prime$, we show
that by zipping its Tate resolution with the exterior coalgebra on 
a vector space $V = \langle x_1, \ldots, x_n \rangle$ to get a
complex of free $\Sym(V \te W^*)$-modules, and thereafter taking a suitable
quotient map $V \te W^* \pil V$ to get a complex of free $\Sym(V)$-modules,
we get a complex of free squarefree modules giving rise to a triplet
of pure free squarefree complexes whose 
degree sequence is $T$. This shows, Theorem \ref{SqfTheConCon}, 
that the conjecture
in this paper implies the conjecture in \cite{FlTr}.

\medskip
The organization of the paper is as follows. 

\medskip
\noindent Section \ref{RegdimSec}
recalls the method of Lascoux \cite{La} as presented in 
Weyman \cite{We}, associating to a coherent sheaf 
$\gF$ on $\pW$ a complex of free $\Sym(V \te W^*)$-modules.
We investigate in more detail the properties of the homology modules
of this complex, in particular their dimension and
regularity.

\medskip
\noindent Section \ref{ExtalgSec}
gives the zipping functor, introduced in \cite{CM}, associating
to a complex of free $E(W^*)$-modules and  a vector space $V$,
a complex of free 
$\Sym(V \te W^*)$-modules. We give some elementary properties of this 
functor.

\medskip
\noindent Section \ref{MainSec} extends the main theorem of 
Cox and Materov \cite{CM} which 
says that the method of Lascoux by pullback and projection,
and the zipping functor
give the same complex of free $\Sym(V \te W^*)$-modules. 
We show how various old and recent 
resolutions in the literature may be obtained by the zipping procedure.

\medskip
\noindent Section \ref{Eks2Sec}
gives first a brief recollection of squarefree modules. Then
a specific example of a triplet of pure free squarefree modules
over $\kk[x_1, x_2, x_3]$ is considered. We show in two detailed examples
how two of the complexes in the triplet 
may be obtained from two complexes of coherent
sheaves on projective spaces $\pW$, in this case $\PS{2}$ resp. $\ptre$, 
taking their Tate resolutions, zip
them with the exterior coalgebra on $V = \langle x_1, x_2, x_3 \rangle$
to get $\Sym(V \te W^*)$-complexes and then take reductions via
a map $V \te W^* \pil V$ to get complexes of free squarefree 
$\kk[x_1, x_2, x_3]$-modules.

\medskip
\noindent Section \ref{ConjSec} first introduces the notion of three
degree sequences forming a homology triplet. Then we give our main 
conjecture on the existence of complexes of coherent sheaves associated
to such homology triplets. We give the equations for the coefficients
of the Hilbert polynomial $P$ of this complex and show that all the Hilbert
polynomials of the cohomology
sheaves of this complex and its dual 
are uniquely determined by the polynomial $P$. 
We also show that the hypercohomology table of the complex is determined
by $P$. 
The section also contains a detailed example of such complexes. 

\medskip
\noindent Section \ref{SqfreeSec}: We prove that the main conjecture of
Section \ref{ConjSec} implies Conjecture 2.11 in \cite{FlTr}
on the existence of triplets of pure free squarefree complexes.

\medskip
\noindent Section \ref{ConeSec}: We show that there is an
injection from the cone of hypercohomology tables of bounded complexes of
coherent sheaves on projective spaces with specified regularity
conditions, to the  cone of homological data sets of complexes
of free squarefree modules. We conjecture that this is an
isomorphism of cones.

\medskip
\noindent Section \ref{BeviszipSec}: We prove that the complexes obtained by 
the method 
of Lascoux and the zipping procedure are the same, in our extended version.

\medskip
\noindent {\it Note.} We have developed a Macaulay2 package ``Triplets'',
\cite{M2},
which computes the Betti diagrams of pure free squarefree
complexes associated to degree triplets, and the
hypercohomology tables of the complexes of coherent sheaves 
associated to homology triplets.

\section{Regularity and dimension}
\label{RegdimSec}
We recall the basic setup of Lascoux \cite{La}, as presented
by Weyman in \cite[Section 5]{We}, for computing syzygies of modules supported
on determinantal varieties. 
   We develop some results on regularities and Krull dimensions of
the modules occurring in this construction.

\subsection{The basic setup}
\label{RegSubsecBasic}
Let $W$ be a finite-dimensional vector space over a field $\kk$ of
arbitrary characteristic and $\pW$ the
projective space $\text{Proj}(\Sym(W))$. 
On this space there is a tautological sequence of 
locally free sheaves:
\[ 0 \pil \Omega_{\pW} (1) \pil \opW \te W \pil
\opW(1) \pil 0. \]
Let $V$ be another finite-dimensional vector space. 
Dualizing the sequence above and tensoring with $V$ we get a sequence:
\begin{equation} \label{RegLigVW}
 0 \vpil  V \te_\kk \gQ \vpil (V \te_\kk W^*) \te \opW \vpil V \te_\kk \opW(-1)
\end{equation}
where $\gQ$ is the dual of $\Omega_{\pW}(1)$. 
Let $X$ be the affine space $\Vbb_{Spec \, \kk}(V \te W^*)$, so
\[ X \times \pB = \Vbb_{\pB} (V \te W^* \te \opB) = \Vbb \]
is the trivial (geometric) vector bundle on $\pW$. 
The quotient bundle $V \te \gQ$ of $V \te W^* \te \gO_{\pW}$
gives a geometric subbundle
$Z = \Vbb(V \te \gQ)$ of $X \times \pW$.
So we have a diagram:
\[ \begin{CD} Z @. \quad {\sus} \quad @. X \times \pB  @>p>> \pB \\
@VV{q^\prime}V  @. @VVqV @.\\
Y @.\quad {\sus} \quad   @.  X. @. \end{CD} \] 
where $Y$ is the image of $Z$. When the dimension of $V$ is
greater than or equal to $\pW$, the varieties $Z$ and $Y$ are
birational by \cite[Prop.6.1.1]{We}. 

\medskip
Since $Y$ is an affine variety, the coherent sheaf $q_* \gO_Z$ is simply the
sheafification of the $\Sym(V \te W^*)$-module 
$H^0(Z, \gO_Z) = H^0(\pW, p_* \gO_Z)$.
Since $Z$ is the bundle $\Vbb_{\pW}(V \te \gQ)$, the pushdown 
$p_* \gO_Z = \Sym(V \te \gQ)$ and so $q_* \gO_Z$ is the sheafification
of:
\begin{equation} \label{RegLigSymVQ}
\Gamma (\pW, \Sym(V \te \gQ)) = \oplus_{d \geq 0} 
\Gamma(\pW, \Sym(V \te \gQ)_d). 
\end{equation}
There is a natural action of $\kk^* = \kk \backslash \{ 0\}$ on $V$ by
scalar multiplication. Hence a natural action on the sequence 
(\ref{RegLigVW}) with trivial action on $W$. 
 So we get an action of
$\kk^*$ on $Z, \Vbb, Y$ and $X$, and this action on $q_* \gO_Z$ gives 
the natural grading on (\ref{RegLigSymVQ}) above. 
By the left quotient map of (\ref{RegLigVW}) this is a graded $\Sym(V \te W^*)$-module.

\medskip
For $\gF$ a coherent sheaf on $\pW$ we get the pullback sheaf
$\gO_Z \te_{\gO_\Vbb} p^*\gF$ on $\Vbb = X \times \pW$, and then
the pushdown $q_*(\gO_Z \te_{\gO_{\Vbb}} p^*\gF)$. Again since $Y$ is affine, 
this sheaf is the sheafification of its global sections.
By the projection formula these are:
\begin{align*} \Gamma(\VV, \gO_Z \te_{\gO_\Vbb} p^*\gF)&  =  
\Gamma(\pW, (p_* \gO_Z ) \te_{\gO_{\pW}} \gF) \\
 & =  \Gamma(\pW, \Sym(V \te \gQ) \te_{\gO_{\pW}} \gF) \\
& = \oplus_{d \geq 0} \Gamma(\pW, \Sym(V \te \gQ)_d \te_{\gO_{\pW}} \gF)
\end{align*}
This is a graded $\Gamma(\pW, \Sym(V \te \gQ))$-module and so
a graded $\Sym(V \te W^*)$-module.

\medskip
Since we will use it a lot, for a coherent sheaf $\gF$, or
complex thereof, it 
is convenient to introduce the following short notation
\[ \gS(\gF) = \Sym(V \te \gQ) \te_{\opW} \gF. \]
So 
\[ \gS(\gF) = \oplus_{d \geq 0} \gS_d(\gF) = 
\oplus_{d \geq 0} \Sym_d(V \te \gQ) \te_{\gO_{\pW}} \gF. \]
Also let the global sections of this sheaf be:
\[ S(\gF) = \Ga(\pW, \gS(\gF)) = \oplus_{d \geq 0} \Gamma(\pW, \gS_d(\gF)),\]
a graded $\Sym(V \te W^*)$-module. 

\medskip
Take an injective resolution
$\gI^\dt$ of $\gO_Z \te_{\gO_{\Vbb}} p^*\gF$, consisting quasi-coherent 
$\gO_\Vbb$-modules. The derived complex 
\begin{equation} \label{RegLigqInj}
\RR q_* (\gO_Z \te_{\gO_{\VV}} p^* \gF)  = q_*(\gI^\dt) 
\end{equation}
becomes a complex of $q_* \gO_Z$-modules.
We may replace $\gF$ by a complex of coherent sheaves, and exactly the same
as above holds. 
The injective resolutions $\gI^\dt$ may
be taken to be equivariant for $\kk^*$-action and so we also get the structure
of graded $\Sym(V \te W^*)$-module on the global sections of (\ref{RegLigqInj}).

\begin{lemma} \label{RegLemHyper}
Let $\gF^\dt$ be a complex of coherent sheaves on $\pW$. 
The hypercohomology module $\HH^i(X, \RR q_* (\gO_Z \te p^* \gF^\dt))$ is the 
hypercohomology module
\[ \HH^i(\pW, \gS(\gF^\dt)) = \oplus_{d \in \hele} \HH^i(\pW, \gS_d(\gF^\dt)).
\] It is a graded $\Sym(V \te W^*)$-module for this grading. 
\end{lemma}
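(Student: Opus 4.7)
The plan is to reduce the hypercohomology on $X$ to hypercohomology on $\pW$ by routing through $\Vbb = X \times \pW$, using the composition-of-derived-functors identity twice (once for each projection from $\Vbb$) and the projection formula for $p$. First I would invoke $\RR\Ga(X,-) \circ \RR q_* = \RR\Ga(\Vbb,-)$; the passage is legitimate because quasi-coherent sheaves on the affine $X$ are $\Ga$-acyclic, so $q_*$ of an injective (or sufficiently $q$-acyclic) representative of $\gO_Z \te p^*\gF^\dt$ continues to compute $\RR\Ga(\Vbb,-)$. This gives
\[
\HH^i(X, \RR q_*(\gO_Z \te p^*\gF^\dt)) \iso \HH^i(\Vbb, \gO_Z \te p^*\gF^\dt),
\]
and the symmetric identity $\RR\Ga(\Vbb,-) = \RR\Ga(\pW,-) \circ \RR p_*$ rewrites the right-hand side as $\HH^i(\pW, \RR p_*(\gO_Z \te p^*\gF^\dt))$.

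The remaining task is to identify this derived pushdown with $\gS(\gF^\dt)$. Here I would apply the projection formula to $p$: the restriction $p|_Z \colon Z \to \pW$ is the affine (geometric) vector bundle $\Vbb_\pW(V \te \gQ) \to \pW$, so $p_*\gO_Z = \Sym(V\te\gQ)$ and $R^j p_*\gO_Z = 0$ for $j > 0$. Because $\gQ$ is locally free, each graded piece $\Sym_d(V\te\gQ)$ is locally free on $\pW$, so the derived tensor product collapses:
\[
\RR p_*(\gO_Z \te_{\gO_\Vbb} p^*\gF^\dt) \iso \Sym(V\te\gQ) \te_{\gO_\pW}^{\mathbf L} \gF^\dt = \gS(\gF^\dt).
\]
Chaining the three identifications produces the asserted isomorphism $\HH^i(X, \RR q_*(\gO_Z \te p^*\gF^\dt)) \iso \HH^i(\pW, \gS(\gF^\dt))$.

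For the grading and $\Sym(V\te W^*)$-module structure, I would exploit the $\kk^*$-action on $V$ (with $W^*$ acted on trivially) already present in the setup: it acts compatibly on $Z$, $\Vbb$, $X$, $\gO_Z$, and $p^*\gF^\dt$. Using the $\kk^*$-equivariant injective resolutions $\gI^\dt$ that Section \ref{RegSubsecBasic} has already arranged, every sheaf and every arrow appearing above is $\ZZ$-graded, and the weight-$d$ summand of $\Sym(V\te\gQ)$ is exactly $\Sym_d(V\te\gQ)$. Hence
\[
\HH^i(\pW, \gS(\gF^\dt)) = \bigoplus_{d \geq 0} \HH^i(\pW, \gS_d(\gF^\dt)),
\]
and the $\Ga(X,\gO_X) = \Sym(V\te W^*)$-module structure on the left-hand side of the lemma transports across the chain of isomorphisms to make this decomposition a grading of $\Sym(V\te W^*)$-modules.

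The main technical hurdle is not conceptual but bookkeeping: one must check that the Leray factorizations and the projection formula isomorphism can all be realized by $\kk^*$-equivariant representatives so that the grading survives intact. Once $\kk^*$-equivariant injective (or flasque) resolutions are chosen consistently, and one notes that the local freeness of $\Sym_d(V\te\gQ)$ makes the tensor product derived-trivial term by term in $\gF^\dt$, the argument is essentially formal and no dimension or characteristic hypothesis on $V, W$, or $\kk$ enters.
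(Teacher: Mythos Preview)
Your argument is correct and follows essentially the same route as the paper: use affineness of $X$ to pass from $\HH^i(X,\RR q_*(-))$ to $\HH^i(\Vbb,-)$, then affineness of $p$ to descend to $\HH^i(\pW,p_*(-))$, then the projection formula together with $p_*\gO_Z=\Sym(V\te\gQ)$, with the grading coming from the $\kk^*$-action. The paper's proof is terser (it simply invokes the Leray spectral sequence for the affine map $p$ and the projection formula without spelling out the derived-tensor or equivariance bookkeeping), but the content is the same.
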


\begin{proof} 
Since $X$ is affine the first hypercohomology modules are the hypercohomology
modules
\[ \HH^i (\Vbb, \gO_Z \te p^* \gF^\dt) \]
which are $\Gamma(\Vbb, \gO_Z)$-modules. 
Since $p$ is an affine map, by the spectral sequence of the
composition $\VV \mto{p} \pW \pil \Spec \, \kk$ this is
\[ \HH^i(\pW, p_*(\gO_Z \te p^* \gF^\dt). \]
By the projection formula this
is
\[ \HH^i (\pW, \Sym(V \te \gQ) \te \gF^\dt) \]
and so this is a $\Gamma(\pW, \Sym(V \te \gQ))$-module. The facts
about the grading follow by the $\kk^*$-action.
\end{proof}

\subsection{Regularity} Let the vector space $V$ have dimension $n$. 
The sequence (\ref{RegLigVW}) 
gives an exact sequence (the sheafified Koszul complex):
\begin{align} \label{RegLigSekv}
0 \vpil \Sym(V \te \gQ) &  \vpil \Sym(V \te W^*) \te \opW \vpil \cdots \notag\\
\cdots & \vpil    \Sym(V \te W^*) (-i) \te \wedge^i(V \te \opW(-1)) 
\vpil \cdots \\
\cdots & \vpil \Sym(V \te W^*)(-n) \te \wedge^n(V \te \opW(-1)) \vpil 0 \notag
\end{align}

\begin{lemma} \label{RegLemReg}
Suppose $\gF$ is a $0$-regular coherent sheaf. 

a. $\gS(\gF)$ is also a $0$-regular coherent sheaf.

b. $S(\gF)$ is generated in degree $0$ by $\Gamma(\pW, \gF)$ as
a $\Sym(V \te W^*)$-module.

Suppose $\gF$ is a $1$-regular coherent sheaf on $\pW$.

c. The minimal degree of a generator of the graded $\Sym(V \te W^*)$-module
$S(\gF)$ is the smallest integer $e$ such that $H^e(\pW, \gF(-e))$ is 
nonzero.

\end{lemma}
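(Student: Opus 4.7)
The plan is to base all three parts on the sheafified Koszul resolution coming from (\ref{RegLigSekv}). In $\kk^*$-weight degree $d$, after the canonical identification $\wedge^i(V \te \opW(-1)) \iso \wedge^i V \te \opW(-i)$, this is a finite locally free resolution of $\Sym_d(V \te \gQ)$. After tensoring with $\gF$, it resolves $\gS_d(\gF)$ by the sheaves
\[ \mathcal{K}_i \;=\; \Sym_{d-i}(V\te W^*) \te \wedge^i V \te \gF(-i), \qquad 0 \leq i \leq \min(d,n). \]
I will analyze its cohomology either via the hypercohomology spectral sequence
\[ E_1^{-i,q} \;=\; \Sym_{d-i}(V\te W^*) \te \wedge^i V \te H^q(\pW, \gF(-i-j)) \;\Longrightarrow\; H^{q-i}(\pW, \gS_d(\gF)(-j)), \]
or by splitting $\mathcal{K}_\bullet$ into short exact sequences $0 \to \mathcal{G}_{i+1} \to \mathcal{K}_i \to \mathcal{G}_i \to 0$ (with $\mathcal{G}_0 = \gS_d(\gF)$) and chasing long exact sequences.

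For part (a), $0$-regularity of $\gS_d(\gF)$ means $H^k(\pW, \gS_d(\gF)(-k)) = 0$ for $k \geq 1$. The cells of the spectral sequence (with $j = k$) on the convergent diagonal $q - i = k$ are $E_1^{-i, k+i}$, whose cohomological factor $H^{k+i}(\pW, \gF(-k-i))$ vanishes by $0$-regularity of $\gF$ since $(-k-i) + (k+i) = 0 \geq 0$ and $k + i \geq 1$. Hence the whole diagonal vanishes at $E_1$, and the abutment is zero. For part (b), the short exact sequence $0 \to \mathcal{G}_1 \to \mathcal{K}_0 \to \gS_d(\gF) \to 0$ gives a connecting map $H^0(\pW, \gS_d(\gF)) \to H^1(\pW, \mathcal{G}_1)$; surjectivity of $\Sym_d(V\te W^*) \te H^0(\pW, \gF) \to H^0(\pW, \gS_d(\gF))$ therefore follows once $H^1(\mathcal{G}_1) = 0$. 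Iterating through the short exact sequences for the $\mathcal{G}_i$, this reduces to the vanishing of $H^{i+1}(\pW, \mathcal{K}_i)$ for $i \geq 1$, hence to $H^{i+1}(\pW, \gF(-i)) = 0$, which is again a $0$-regularity vanishing since $(-i) + (i+1) = 1 \geq 0$.

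For part (c), $1$-regularity of $\gF$ forces $H^p(\pW, \gF(-i)) = 0$ whenever $p \geq 1$ and $p > i$, so in the spectral sequence with $j = 0$ converging to $H^0(\pW, \gS_d(\gF))$ every cell on a strictly positive diagonal $q - i > 0$ vanishes, and all outgoing differentials from the diagonal $q - i = 0$ therefore die. On that diagonal, $E_1^{-i, i} = \Sym_{d-i}(V\te W^*) \te \wedge^i V \te H^i(\pW, \gF(-i))$ vanishes whenever $i > d$ (by the symmetric power) or $i < e$ (by minimality of $e$). For $d < e$ every cell on the diagonal is therefore zero, giving $S(\gF)_d = 0$. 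For $d = e$ the single surviving term is $E_1^{-e, e} = \wedge^e V \te H^e(\pW, \gF(-e)) \neq 0$, and every potential incoming differential originates in $E_r^{-e-r, e+r-1}$ whose $E_1$ factor contains $\Sym_{-r}(V\te W^*) = 0$ for $r \geq 1$; so this term persists to $E_\infty$. Thus $S(\gF)_e \neq 0$, and together with $S(\gF)_{<e} = 0$ this pins the minimal generator degree down to exactly $e$. The only delicate step in the argument is this last one, the survival of the $(-e, e)$ cell to $E_\infty$, which requires the combined use of $1$-regularity (to kill outgoing differentials) and the vanishing of symmetric powers of negative index (to kill incoming ones).
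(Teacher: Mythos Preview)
Your proof is correct and follows essentially the same hypercohomology spectral sequence approach as the paper; the paper phrases everything at the $E_2$ page rather than $E_1$, and for part (b) reads the surjection directly off the spectral sequence rather than splitting into short exact sequences, but the substance is identical. One small bookkeeping slip in part (b): iterating the long exact sequences for $0 \to \mathcal{G}_{i+1} \to \mathcal{K}_i \to \mathcal{G}_i \to 0$ reduces $H^1(\mathcal{G}_1)=0$ to the vanishing of $H^{i}(\pW,\mathcal{K}_i)$ for $i \geq 1$, not $H^{i+1}(\pW,\mathcal{K}_i)$ --- fortunately both follow from $0$-regularity of $\gF$, so the conclusion is unaffected.
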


\begin{proof}
Tensoring (\ref{RegLigSekv}) with $\gF(r)$ we get a hypercohomology spectral
sequence:
\begin{align} \label{RegLigSpectral}
E_2^{pq} = H^p H^q(\pW, & \, \Sym_{d+p}(V \te W^*)  \te \gF(r) \te \wedge^{-p}(V \te
\opW(-1)))  \notag \\ & \Rightarrow H^{p+q}(\pW, \gS_d(\gF)(r)). 
\end{align} 
In order to show that $\gS_d(\gF)$ is $0$-regular, we need to show
that the term on the right in the spectral sequence above is zero
when $p+q > 0$ and $p+q+r = 0$. So choosing any negative $r$,
this will follow provided we show that 
$E^{pq}_2 = 0$ for all $(p,q)$ on the line $p+q = -r$. 

But by our regularity assumptions,
the terms on the first line of (\ref{RegLigSpectral}) may be nonzero only when 
i. $p \leq 0$, and ii. $q = 0$ or $p+q+r < 0$. 
Since we are assuming $r$ negative we have also when $q = 0$ that $p+q+r < 0$. 
Since $-r-1 \geq 0$ the possible nonzero range is then:

\scalebox{1} 
{
\begin{pspicture}(-2,-2.5567188)(8.559063,2.5767188)
\definecolor{color50b}{rgb}{0.08235294117647059,0.08235294117647059,0.08235294117647059}
\usefont{T1}{ptm}{m}{n}
\rput(8.184531,-1.7067188){$p$}
\usefont{T1}{ptm}{m}{n}
\rput(4.5045314,2.3732812){$q$}
\usefont{T1}{ptm}{m}{n}
\rput(5.004531,-0.34671876){$-r-1$}
\psdots[dotsize=0.12](4.1,-0.3817)
\pspolygon[linewidth=0.04,linestyle=dotted,dotsep=0.16cm,fillstyle=vlines,hatchwidth=0.04,hatchangle=0.0](0.1,-1.8567188)(4.1,-1.8567188)(4.1,-0.45671874)(2.08,1.5232812)(2.1,1.5032812)
\psline[linewidth=0.04cm,fillcolor=color50b,arrowsize=0.05291667cm 2.0,arrowlength=1.4,arrowinset=0.4]{<-}(7.72,-1.8767188)(0.0,-1.8567188)
\psline[linewidth=0.04cm,fillcolor=color50b,arrowsize=0.05291667cm 2.0,arrowlength=1.4,arrowinset=0.4]{<-}(4.08,2.3032813)(4.1,-2.5367188)
\psline[linewidth=0.04cm,fillcolor=color50b](4.1,-0.45671874)(2.04,1.5432812)
\end{pspicture} 
}

\noindent Hence we obtain that $\gS_d(\gF)$ is a $0$-regular sheaf.

When $r = 0$ the possible nonzero terms are

\scalebox{1} 
{
\begin{pspicture}(-3,-2.2367187)(8.199062,2.2567186)
\definecolor{color129b}{rgb}{0.08235294117647059,0.08235294117647059,0.08235294117647059}
\pspolygon[linewidth=0.04,linestyle=dotted,dotsep=0.16cm,fillstyle=vlines,hatchwidth=0.04,hatchangle=0.0](0.04,-1.3167187)(3.66,-1.3367188)(1.52,0.7632812)(1.54,0.74328125)
\psline[linewidth=0.04cm,fillcolor=color129b,arrowsize=0.05291667cm 2.0,arrowlength=1.4,arrowinset=0.4]{<-}(7.28,-1.3167187)(0.0,-1.3367188)
\psline[linewidth=0.04cm,fillcolor=color129b,arrowsize=0.05291667cm 2.0,arrowlength=1.4,arrowinset=0.4]{<-}(4.24,1.8832812)(4.22,-2.2167187)
\psline[linewidth=0.04cm,fillcolor=color129b](1.5,0.78328127)(3.64,-1.3167187)
\usefont{T1}{ptm}{m}{n}
\rput(7.824531,-0.96671873){$p$}
\usefont{T1}{ptm}{m}{n}
\rput(4.764531,2.0532813){$q$}
\psdots[dotsize=0.12](4.22,-1.9167187)
\usefont{T1}{ptm}{m}{n}
\rput(4.7045314,-1.7867187){$-1$}
\end{pspicture} 
}

\noindent including the half line where $p \leq 0$ and $q = 0$. 
We then see that $S_d(\gF)$ is a quotient of 
$H^0(\pW, \Sym_d(V \te W^*) \te \gF)$, proving b.


As for c. let $r = 0$ in (\ref{RegLigSpectral}). 
Consider terms on the first line when $p+q \geq 1$.
If $p > 0$ these terms are zero. When $p \leq 0$ then $q \geq 1$ and
so the terms on the firs line are again zero.
When $p+q \leq 0$, the terms on the first line may be nonzero only when 
$p \geq -d$. Also when $p+q = 0$ the terms on the left may be nonzero
only when $p \leq -e$. Hence the possible nonzero range of the terms
on the left side is:

\scalebox{1} 
{
\begin{pspicture}(-2,-2.1967187)(7.7590623,2.2167187)
\definecolor{color229b}{rgb}{0.08235294117647059,0.08235294117647059,0.08235294117647059}
\usefont{T1}{ptm}{m}{n}
\rput(7.384531,-1.1067188){$p$}
\usefont{T1}{ptm}{m}{n}
\rput(4.4045315,2.0132813){$q$}
\pspolygon[linewidth=0.04,linestyle=dotted,dotsep=0.16cm,fillstyle=vlines,hatchwidth=0.04,hatchangle=0.0](1.54,-1.3567188)(3.74,-1.3767188)(2.54,-0.19671875)(2.54,0.02328125)(1.54,1.0232812)(1.54,1.0032812)
\psline[linewidth=0.04cm,fillcolor=color229b,arrowsize=0.05291667cm 2.0,arrowlength=1.4,arrowinset=0.4]{->}(0.0,-1.3767188)(7.16,-1.3767188)
\psline[linewidth=0.04cm,fillcolor=color229b,arrowsize=0.05291667cm 2.0,arrowlength=1.4,arrowinset=0.4]{->}(3.96,-2.1767187)(3.94,1.9632813)
\psline[linewidth=0.04,fillcolor=color229b](3.76,-1.3967187)(2.56,-0.21671875)(2.54,0.04328125)(1.52,1.0232812)(1.54,1.0032812)
\usefont{T1}{ptm}{m}{n}
\rput(1.2545313,-1.6667187){$-d$}
\usefont{T1}{ptm}{m}{n}
\rput(2.5145311,-1.6667187){$-e$}
\psdots[dotsize=0.12](1.54,-1.4167187)
\psdots[dotsize=0.12](2.56,-1.3767188)
\psdots[dotsize=0.12](3.74,-1.3967187)
\usefont{T1}{ptm}{m}{n}
\rput(3.5745313,-1.6467187){$-1$}
\psline[linewidth=0.04cm,fillcolor=color229b](1.54,1.0032812)(0.66,1.8232813)
\end{pspicture} 
}

\noindent Therefore when $d < e$ we see that the global sections $H^0(\pW, \gS(\gF)_d)$ 
vanish.
When $d = e$ we see that this space of global sections is isomorphic
to $H^e(\pW, \wedge^e V \te \gF(-e))$. 
\end{proof}

\begin{lemma} \label{RegLemKoh}

Let $\gF^\dt$ be a complex of coherent sheaves on $\pW$. 
Fix an integer $r$ and suppose the cohomology sheaves $H^{r-j}(\gF^\dt)$
are $j$-regular for $j \geq 1$. Then the hypercohomology:
\begin{itemize}
\item[a.] $\HH^r(\pW, \gF^\dt) = H^0(\pW, H^r(\gF^\dt)), $
\item[b.] $ \HH^r(\pW, \gS(\gF^\dt)) = H^0(\pW, \gS(H^r(\gF^\dt))).$ 
\end{itemize}

\end{lemma}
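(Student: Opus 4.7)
The plan is to compute $\HH^r(\pW, \gS(\gF^\dt))$ via the standard hypercohomology spectral sequence of the complex $\gS(\gF^\dt)$. Since each graded piece $\Sym_d(V \te \gQ)$ is locally free (as $\gQ$ is), the functor $\gS = \Sym(V\te\gQ)\te_{\opW}-$ is exact and commutes with taking cohomology sheaves of bounded complexes. Hence the spectral sequence takes the form
\[ E_2^{p,q} \;=\; H^p(\pW,\gS(H^q(\gF^\dt))) \;\Longrightarrow\; \HH^{p+q}(\pW,\gS(\gF^\dt)). \]
The goal is to show that on the diagonal $p+q=r$ only the $(0,r)$-position contributes to the abutment.

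For $p\geq 1$ and $q=r-p$, I would argue $E_2^{p,r-p}=0$ as follows. With $j=p$, the hypothesis says $H^{r-p}(\gF^\dt)$ is $p$-regular, so its twist $H^{r-p}(\gF^\dt)(p)$ is $0$-regular. Applying Lemma \ref{RegLemReg}(a) to this twist, together with the identity $\gS(H^{r-p}(\gF^\dt)(p))=\gS(H^{r-p}(\gF^\dt))(p)$, shows that $\gS(H^{r-p}(\gF^\dt))$ is $p$-regular. Specializing the defining vanishing to cohomological degree $i=p$ in twist $0$ gives $H^p(\pW,\gS(H^{r-p}(\gF^\dt)))=0$. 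For $p<0$ the $E_2$-term vanishes automatically.

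I then need to ensure that no differentials alter $E_\infty^{0,r}$. Incoming differentials $d_s\colon E_s^{-s,r+s-1}\to E_s^{0,r}$ vanish since sheaf cohomology in negative degree is zero. For outgoing differentials $d_s\colon E_s^{0,r}\to E_s^{s,r-s+1}$ with $s\geq 2$, the target is a subquotient of $E_2^{s,r-s+1}=H^s(\pW,\gS(H^{r-s+1}(\gF^\dt)))$. Here $j=s-1$, so the same argument shows $\gS(H^{r-s+1}(\gF^\dt))$ is $(s-1)$-regular, but on its own this only yields vanishing of $H^s$ in twist $-1$. The subtle step is to upgrade via monotonicity of Castelnuovo-Mumford regularity to $s$-regularity, which then gives $H^s(\pW,\gS(H^{r-s+1}(\gF^\dt)))=0$ in twist $0$, as required.

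Putting these vanishings together, the abutment collapses to
\[ \HH^r(\pW,\gS(\gF^\dt))\;=\;E_\infty^{0,r}\;=\;E_2^{0,r}\;=\;H^0(\pW,\gS(H^r(\gF^\dt))).\]
I expect the main obstacle to be the monotonicity step for the outgoing differentials: Lemma \ref{RegLemReg}(a) applied directly produces regularity only in exactly the index furnished by the hypothesis, so one must invoke the general fact that $m$-regular implies $(m+1)$-regular in order to squeeze out the extra vanishing one cohomological degree further up that the outgoing $d_s$ requires.
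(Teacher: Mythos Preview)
Your argument is correct and follows the same approach as the paper: both use the hypercohomology spectral sequence
\[ E_2^{p,q}=H^p(\pW,\gS(H^q(\gF^\dt)))\Rightarrow\HH^{p+q}(\pW,\gS(\gF^\dt)), \]
invoke local freeness of $\Sym(V\te\gQ)$ to identify $H^q(\gS(\gF^\dt))=\gS(H^q(\gF^\dt))$, and then use Lemma~\ref{RegLemReg}(a) to transport the $j$-regularity of $H^{r-j}(\gF^\dt)$ to $\gS(H^{r-j}(\gF^\dt))$.

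The only difference is presentational. The paper observes in one stroke that for $q\leq r-1$ the term $E_2^{p,q}$ can be nonzero only when $p+q\leq r-1$ (this already uses monotonicity of regularity), so the entire region $\{q\leq r-1,\ p+q\geq r\}$ vanishes at the $E_2$-page; this simultaneously kills the diagonal contributions with $p\geq 1$ and the targets $E_2^{s,r-s+1}$ (which lie on $p+q=r+1$) of all outgoing differentials. You instead treat the diagonal and the outgoing $d_s$ separately, and your ``subtle step'' is exactly the standard fact that $m$-regular implies $(m{+}1)$-regular, which the paper uses without comment. There is no genuine obstacle here; your concern at the end is overcautious.
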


\begin{proof}
There is a spectral sequence:
\begin{equation} \label{RegLigSpectral2}
E_2^{p,q} = H^p(\pW, H^q(\gF^\dt)) \Rightarrow \HH^{p+q}(\pW, \gF^\dt). 
\end{equation}

By hypothesis the left side may be nonzero only in the range:

\scalebox{1} 
{
\begin{pspicture}(-2.5,-2.5567188)(5.9190626,2.5767188)
\psline[linewidth=0.04,arrowsize=0.05291667cm 2.0,arrowlength=1.4,arrowinset=0.4,fillstyle=vlines*,hatchwidth=0.04,hatchangle=0.0]{<-}(1.24,2.1032813)(1.24,-2.5167189)(1.26,-2.5367188)
\psline[linewidth=0.04,arrowsize=0.05291667cm 2.0,arrowlength=1.4,arrowinset=0.4,fillstyle=vlines*,hatchwidth=0.04,hatchangle=0.0]{<-}(5.04,-1.6967187)(0.0,-1.6767187)(0.02,-1.6967187)
\psdots[dotsize=0.12](1.24,0.32328126)
\psdots[dotsize=0.12](1.24,-0.01671875)
\psline[linewidth=0.04,linestyle=dashed,dash=0.16cm 0.16cm,fillstyle=vlines*,hatchwidth=0.04,hatchangle=0.0](1.24,0.32328126)(4.22,0.32328126)(4.24,1.6032813)(1.26,1.6032813)(1.28,1.5832813)
\psline[linewidth=0.04cm](1.24,0.32328126)(4.2,0.32328126)
\psline[linewidth=0.04,linestyle=dashed,dash=0.16cm 0.16cm,fillstyle=vlines*,hatchwidth=0.04,hatchangle=0.0](1.24,-0.03671875)(3.34,-2.1567187)(1.28,-2.1567187)(1.3,-2.1767187)
\psline[linewidth=0.04cm](1.24,-0.03671875)(3.46,-2.2767189)
\psline[linewidth=0.04cm](1.26,-1.6767187)(3.04,-1.6767187)
\usefont{T1}{ptm}{m}{n}
\rput(5.5445313,-1.6267188){$p$}
\usefont{T1}{ptm}{m}{n}
\rput(1.2245313,2.3732812){$q$}
\usefont{T1}{ptm}{m}{n}
\rput(0.7945312,0.53328127){$r$}
\usefont{T1}{ptm}{m}{n}
\rput(0.63453126,-0.18671875){$r-1$}
\end{pspicture} 
}

We therefore see that the right side of (\ref{RegLigSpectral2}), when
$p+q = r$, becomes:
\[ \HH^r(\pW, \gF^\dt) = H^0(\pW, H^r(\gF^\dt)), \]
proving part a.

Part b. follows from a. by:
i) $\gS(H^q(\gF^\dt)) = H^q(\gS(\gF^\dt))$
because the $\Sym_d(V \te \gQ)$ are locally free sheaves. 
ii) When a coherent sheaf $\gG$ is $t$-regular then $\gG(t)$ is $0$-regular. By 
Lemma \ref{RegLemReg} a. $\gS(\gG(t)) = \gS(\gG)(t)$ is $0$-regular
and so $\gS(\gG)$ is $t$-regular.
\end{proof}

\subsection{Krull dimensions}
Let $n$ denote the dimension of the vector space $V$, and $w$ the
dimension of $W$. We let $m = w-1$ be the dimension of the projective
space $\pW$. 
This subsection is devoted to prove the following.

\begin{proposition} \label{RegProDim}
Let $\gF$ be a coherent sheaf on $\pW$ whose support has dimension $\delta$,
and $V$ a vector space of dimension $\geq$ that of the support of $\gF$.
Then $S(\gF)$ is a finitely generated $\Sym(V \te W^*)$-module of
Krull dimension $nm + \delta$. 
\end{proposition}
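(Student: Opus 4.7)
\emph{Proof plan.} The strategy is to identify $S(\gF)$ with the module of global sections on the affine scheme $X$ of a coherent sheaf, and then to compute the Krull dimension of its support via the geometry of the incidence variety $Z \subseteq \VV = X \times \pW$.

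First I would apply the projection formula along the affine map $p$ to rewrite
\[ \gS(\gF) = \Sym(V \te \gQ) \te \gF = p_*\gO_Z \te \gF = p_*(\gO_Z \te p^*\gF). \]
Taking global sections and using that $X$ is affine gives
\[ S(\gF) = \Ga(\VV, \gO_Z \te p^*\gF) = \Ga(X, q_*(\gO_Z \te p^*\gF)). \]
The restriction $q|_Z : Z \pil X$ is proper, since $Z \hookrightarrow \VV = X \times \pW$ is closed and $\pW$ is a projective variety; therefore $\gG := q_*(\gO_Z \te p^*\gF)$ is a coherent sheaf on the noetherian affine scheme $X$. Consequently $S(\gF) = \Ga(X, \gG)$ is a finitely generated module over $\Sym(V \te W^*) = \Ga(X, \gO_X)$, whose Krull dimension equals $\dim \Supp \gG$.

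Next I would localize the support. Let $S = \Supp \gF \subseteq \pW$, so $\dim S = \delta$, and set $Z_S := Z \cap p^{-1}(S)$. As $Z = \Vbb(V \te \gQ)$, the restriction $Z_S$ is a geometric vector bundle of rank $nm$ over $S$ (fibre $V \te \gQ_x \cong \kk^{nm}$ at each $x \in S$), so $\dim Z_S = nm + \delta$. Because $\gO_Z \te p^*\gF$ is supported on $Z_S$ and $q|_Z$ is proper, $\Supp \gG$ is contained in the closed subset $Y_S := q(Z_S) \subseteq X$; and since $\gF$ is generically nonzero on $S$ and $q|_{Z_S}$ is dominant onto $Y_S$, the pushforward $\gG$ is generically nonzero on $Y_S$, so the two supports agree up to lower-dimensional loci.

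The crux is to show that $q|_{Z_S} : Z_S \pil Y_S$ is generically finite, whence $\dim Y_S = \dim Z_S = nm + \delta$ by the fibre dimension theorem. The fibre over $f \in Y_S$ is $\PP(\ker f) \cap S$, and for a generic pair $(f, x_0) \in Z_S$ the kernel $\ker f$ is a generic subspace of $W$ containing $\ell_{x_0}$, of dimension $\max(w-n, 1)$. If $n \geq m$, this dimension is $1$, so $\PP(\ker f) = \{x_0\}$ and the fibre is a single point. If $n < m$, then $\PP(\ker f)$ is a generic $(m-n)$-dimensional projective subspace of $\pW$ through $x_0$; projecting from $x_0$ reduces the question to a generic $(m-n-1)$-dimensional subspace of $\PS{m-1}$ meeting the image of $S$, which has dimension $\leq \delta$, with expected intersection dimension $(m-n-1) + \delta - (m-1) = \delta - n$. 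The hypothesis $n \geq \delta$ makes this $\leq 0$, so the generic fibre is finite.

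The principal obstacle is this last dimension count: one must verify that as $(f, x_0)$ ranges over $Z_S$, equivalently as the induced map $\bar f : W/\ell_{x_0} \pil V$ ranges freely, the subspace $\PP(\ker \bar f) \subseteq \PS{m-1}$ sweeps out a generic linear subspace of the correct dimension, so that the transversality estimate applies and the hypothesis $n \geq \delta$ is invoked at the right place. Granted that, one concludes $\dim_{\text{Krull}} S(\gF) = \dim Y_S = nm + \delta$.
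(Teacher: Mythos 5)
This is a genuinely different route from the paper's. The paper reduces to structure sheaves of linear subspaces: Lemma \ref{RegLemDim} invokes Weyman's birationality of $Z$ and $Y$ (valid for $n\geq m$), Lemma \ref{RegLemLindim} handles $\gO_L$ via the splitting $\gQ|_L=\gQ_L\oplus\gO_L^c$, and for $0$-regular $\gF$ the K-theory decomposition $[\gF]=\sum a_i[\gO_{L_i}]$ together with a Hilbert-function comparison gives the dimension; the general case is then an induction on $\dim\Supp\gF$ after twisting, which also delivers finite generation. Your route computes $\dim\Supp q_*(\gO_Z\te p^*\gF)$ directly. The finite-generation step via properness of $q|_Z$ is in fact cleaner and needs no regularity hypothesis. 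But the crucial claim that $Z_S\to Y_S$ is generically finite when $n<m$, so that $\dim Y_S=\dim Z_S=nm+\delta$, is exactly where you flag a gap yourself, and the gap is real.

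What must be proved: for $(x_0,\phi)$ general in the top-dimensional component of $Z_S$, the set $\{x\in S:\im\phi\subseteq H_x\}$ is finite, where $H_x\subset W$ is the hyperplane cut out by $x$. Your expected-dimension count (project from $x_0$, intersect a generic $(m-n-1)$-plane with a $\leq\delta$-dimensional image) gives $\delta-n\leq 0$, but "expected" is not automatically "actual" here: the linear space is constrained to pass through $x_0\in S$, and lines of $S$ through $x_0$ could a priori be swallowed by the projection. The standard way to close this is an incidence variety. Let $G$ be the Grassmannian of $(m-n)$-planes in $\pW$ through $x_0$ and set $I=\{(L,s):s\in L\cap S\}\subseteq G\times S$. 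Fibering $I\to S$: over $s\neq x_0$ the fiber has dimension $n(m-n-1)$, and over $s=x_0$ it is all of $G$, so $\dim I\leq\max\bigl(\delta+n(m-n-1),\,n(m-n)\bigr)=n(m-n)=\dim G$, using $n\geq\delta$ exactly here; hence $I\to G$ is generically finite. You also need the family of $L$'s coming from varying $\phi\in\Hom(V,\ker\lambda_{x_0})$ to dominate $G$, which holds because $U\mapsto U^\perp$ bijects $n$-planes of $\ker\lambda_{x_0}$ with $(w-n)$-planes of $W^*$ through the line of $x_0$. So your argument is completable, but that step carries real content and has to be written out; the paper's numerical route sidesteps it entirely at the price of the two preparatory lemmas and the twist-induction for the non-$0$-regular case.
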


We prove this at the end of this subsection. First we develop some lemmata.

\begin{lemma} \label{RegLemDim}
The $\Sym(V \te W^*)$-module $S(\opW)$ has 
Krull dimension $(n+1)m$ when $\dim_\kk V \geq \dim \pW$. 
\end{lemma}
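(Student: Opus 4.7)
My plan is to interpret $S(\opW)$ as global sections of a pushforward sheaf on the affine variety $X$, and to reduce the computation of its Krull dimension to the dimension of the image variety $Y = q(Z)$, which I can then compute by comparison with $Z$ via the birationality input from \cite{We}.

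First, I would unwind the identifications from Subsection \ref{RegSubsecBasic} to obtain
\[ S(\opW) = \Ga(\pW, \Sym(V \te \gQ)) = H^0(\pW, p_* \gO_Z) = H^0(Z, \gO_Z) = \Ga(X, q_* \gO_Z). \]
Next, I would observe that the restriction $q|_Z : Z \pil X$ is proper, since it factors as $Z \hookrightarrow X \times \pW \pil X$ with the second map being the proper projection. Hence $q_* \gO_Z$ is a coherent sheaf on the affine variety $X$, so $S(\opW)$ is a finitely generated $\Sym(V \te W^*)$-module whose Krull dimension equals the dimension of the support of $q_* \gO_Z$; that support is exactly the image $Y = q(Z)$, since $q|_Z$ is surjective onto $Y$ and $\gO_Z$ has full support on $Z$.

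The dimension of $Z$ is straightforward to compute. The map $p|_Z : Z \pil \pW$ realises $Z$ as the total space of the geometric vector bundle $\Vbb(V \te \gQ)$ on $\pW$. Since $\gQ$ has rank $m$ and $V$ has dimension $n$, the fibres of $p|_Z$ are affine spaces of dimension $nm$, and so $\dim Z = m + nm = (n+1)m$.

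The final and decisive step is to invoke \cite[Prop.6.1.1]{We}, which states that the hypothesis $\dim V \geq \dim \pW$ is exactly what is needed for the map $q|_Z : Z \pil Y$ to be birational. Therefore $\dim Y = \dim Z = (n+1)m$, giving the stated Krull dimension of $S(\opW)$. There is no serious obstacle in this proof: the birationality is the only nontrivial input, and it is supplied by \cite{We}; the rest is assembling the geometric identifications and applying standard facts about proper pushforwards to affine targets.
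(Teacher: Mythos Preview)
Your proof is correct and follows essentially the same route as the paper's: identify $S(\opW)$ with $\Gamma(X, q_*\gO_Z)$, reduce its Krull dimension to $\dim Y$ via coherence on the affine base, and then invoke \cite[Prop.~6.1.1]{We} for the birationality $Z \to Y$ to conclude $\dim Y = \dim Z = (n+1)m$. Your version spells out the properness of $q|_Z$ and the identification of the support with $Y$ a bit more explicitly, but the argument is the same.
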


\begin{proof}  This module is the global sections of
$q_*(\gO_Z)$ on the affine variety $X$. Being affine 
this is the dimension of the sheaf $q_* \gO_Z$. 
Since $Z$ and its image $Y$ 
in $X$ are birational
by \cite[6.1.1]{We}, $q_*(\gO_Z)$ has the same dimension
as $\gO_Z$.  Since the dimension of $Z$ is $(n+1)m$
we are done. 
\end{proof}

\begin{lemma} \label{RegLemLindim}
Let $L$ be a linear subspace of $\pW$ of codimension $c$.
The module $S(\gO_L)$ has Krull dimension $(n+1)m- c$
when $\dim_\kk V \geq \dim_\kk L$. 
\end{lemma}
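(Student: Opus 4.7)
My plan is to reduce Lemma \ref{RegLemLindim} to Lemma \ref{RegLemDim} applied on the subspace $L$ itself, after identifying how the universal quotient bundle $\gQ$ restricts.

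First, I would write $L = \PP(U)$ for a subspace $U\sus W$ of dimension $w-c$. Since $\gS(\gO_L) = \Sym(V \te \gQ) \te_{\opW} \gO_L$ is set-theoretically supported on $L$, the global sections are
\[ S(\gO_L) = \Gamma(L,\Sym(V\te \gQ|_L)). \]
The key geometric step is to compare $\gQ|_L$ with the analogous quotient bundle $\gQ_L$ on $L$ itself. From the defining sequence $0 \to \opW(-1) \to W^* \te \opW \to \gQ \to 0$, restriction to $L$ gives $0 \to \gO_L(-1) \to W^* \te \gO_L \to \gQ|_L \to 0$, and the inclusion $\gO_L(-1) \hookrightarrow W^* \te \gO_L$ factors through $U^* \te \gO_L$. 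Using $0 \to U^* \to W^* \to (W/U)^* \to 0$ and the snake lemma (or the fact that a surjection onto a free sheaf splits), I obtain a canonical splitting
\[ \gQ|_L \;\cong\; \gQ_L \,\oplus\, (W/U)^* \te \gO_L. \]

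Given this decomposition, I would pass to symmetric powers, getting
\[ \Sym(V\te \gQ|_L) \;\cong\; \Sym(V\te \gQ_L) \te_\kk \Sym(V\te (W/U)^*), \]
with the second factor a trivial (free) sheaf on $L$. Taking global sections and noting that $\Sym(V\te (W/U)^*)$ is a polynomial ring in $nc$ variables over $\kk$,
\[ S(\gO_L) \;\cong\; \Gamma(L,\Sym(V\te \gQ_L)) \,\te_\kk\, \Sym(V\te (W/U)^*). \]
I would then apply Lemma \ref{RegLemDim} to the projective space $L$ (with vector space $U$ of dimension $w-c$ in place of $W$); the hypothesis $\dim_\kk V \geq \dim_\kk L$ is exactly the condition needed there, so the first factor has Krull dimension $(n+1)(m-c)$ as a module over $\Sym(V\te U^*)$. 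Adding the contribution of the polynomial ring in $nc$ variables gives
\[ \dim S(\gO_L) \;=\; (n+1)(m-c) + nc \;=\; (n+1)m - c, \]
as claimed, and the module is finitely generated since both tensor factors are.

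The main (very mild) obstacle is the splitting of $\gQ|_L$: one must check that the inclusion $\gO_L(-1) \hookrightarrow W^* \te \gO_L$ really does land in the $U^*$-summand, which follows because on $L=\PP(U)$ the tautological line is precisely the one from $U$. Once this is established, the rest is formal manipulation of symmetric algebras and Krull dimensions; I would finally note that the compatibility of $\Sym(V\te W^*)$-module structures on $S(\gO_L)$ with the $\Sym(V\te U^*)$-module structure on $\Gamma(L,\Sym(V\te \gQ_L))$ is inherited from the surjection $W^* \pils U^*$, so taking Krull dimensions over the larger ring gives the same answer.
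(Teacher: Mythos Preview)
Your argument is correct and follows essentially the same route as the paper: split $\gQ|_L \cong \gQ_L \oplus \gO_L^{\,c}$, pass to symmetric algebras, and invoke Lemma~\ref{RegLemDim} on $L$ to get $(n+1)(m-c)+nc=(n+1)m-c$. The paper does the dimension count via Hilbert series rather than your direct tensor-product-of-algebras argument, but this is only a cosmetic difference.

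One notational slip to fix: in the paper's convention $\pW=\text{Proj}(\Sym(W))$, a linear subspace $L$ corresponds to a \emph{quotient} $W\twoheadrightarrow W'$, not a subspace $U\sus W$; this is why your sequence $0\to U^*\to W^*\to (W/U)^*\to 0$ and the phrase ``factors through $U^*\te\gO_L$'' are slightly garbled (you need $U^*\sus W^*$, which holds when $U$ is a quotient of $W$, but then $W/U$ is undefined). Replace $U$ by the quotient $W'$ (so $(W')^*\sus W^*$ and the trivial summand is $\ker(W^*\to (W')^*)$, of dimension $c$) and everything lines up.
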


\begin{proof} 
On the linear subspace $L$ we have the dual tautological sequence:
\[ 0 \pil \gO_L(-1) \pil H^0 \gO_L(1) \te \gO_L \pil \gQ_L \pil 0. \]
Recall the bundle $\gQ$ of (\ref{RegLigVW}) and note that
$\gQ_{|L} = \gQ_L \oplus \gO_L^c$. Hence 
\begin{align} 
\Sym_d(V \te \gQ) \te \gO_L  = & \, \, \Sym_d((V \te \gQ_L) \oplus (V \te \gO_L^c)) \\
 = & \oplus_i \Sym_{d-i} (V \te \gQ_L) \te_\kk \Sym_i(V \te_\kk \kk^c). \notag
\end{align}
The subspace $L \sus \pW$ may be considered as a projective space
$L = \PP(W^\prime)$ for some quotient $W \pil W^\prime$. We then get the
functors $\gS_L$ and $S_L$ on coherent sheaves on this space.
Taking sections of the above  equation we get:
\[ \sum_{d \geq 0} \dim_\kk S_d(\gO_L)t^d 
= (\sum_{d \geq 0} \dim_\kk S_{L,d}(\gO_L) t^d) 
(\sum_{d \geq 0} \dim_\kk \Sym_d(V \te_{\kk} \kk^c)t^d), \] 
and so 
\[ \dim S(\gO_L) = \dim S_L(\gO_L) + nc. \]
By Lemma \ref{RegLemDim} the dimension of $S_L(\gO_L)$ is 
\[ (n+1)(w^\prime - 1) = (n+1) (w-c-1) = (n+1)(w-1) -cn -c.\]
We then get the result.
\end{proof}

\begin{proof}[Proof of Proposition \ref{RegProDim}]
Let $L_d$ be a subspace of $\pW$ of dimension $d$ for 
$d = 0, \ldots, m$. The Grothendieck group of coherent sheaves on $\pW$
has a basis consisting of classes $[ \gO_{L_d}]$ for $d = 0, \ldots, m$.
So let $[\gF] = \sum_{i = 0}^{\delta} a_i [\gO_{L_i}]$ where
$a_{\delta} \neq 0$. Since $\Sym(V \te \gQ)$ is locally free we get by
tensoring the above that
\[ [ \gS_d(\gF)] = \sum_{i = 0}^{\delta} a_i [ \gS_d(\gO_{L_i})]. \]
The Hilbert-Poincar\'e polynomial $\chi$ of coherent sheaves on 
$\pW$ is an additive function on 
exact sequences and so descends to a function on the Grothendieck group.
Hence
\[ \chi \gS_d(\gF) =\sum_{i= 0}^{\delta} a_i \chi \gS_d(\gO_{L_i}). \]
Assume that $\gF$ is $0$-regular. 
By Lemma \ref{RegLemReg}b. $\gS(\gF)$ is finitely generated and by
by Lemma \ref{RegLemReg}a. 
the Euler-Poincar\'e characteristics above are simply given by
the dimension of the space of global sections. Hence we obtain
an equality of Hilbert functions:
\[ \sum_d \dim_\kk S_d(\gF) t^d = 
\sum_{i= 0}^{\delta} a_i \sum_d \dim_\kk S_d(\gO_{L_i})t^d. \] 
Whence $S(\gF)$ has dimension $nm + \delta$ by Lemma \ref{RegLemLindim}.

\medskip 
In general if $\gF$ is not assumed to be $0$-regular, we may argue
by induction on the dimension of the support of $\gF$ as follows:
If $\dim \Supp \gF = 0$, then $\gF$ is $0$-regular and so we are done.
If $\dim \Supp \gF \geq 1$ let $\gF$ be $r$-regular where $r > 0$.
Let $Q$ be a general form of degree $r$ defining a hypersurface not
containing any associated subscheme of $\gF$ (embedded or not). This
gives a short exact sequence
\[ 0 \pil \gF \mto{\cdot Q} \gF(r) \pil \gG \pil 0 \]
where $\gG$ is the cokernel, and $\dim \Supp \gG < \dim \Supp \gF$. 
We then get an exact sequence
\[ 0 \pil S(\gF) \pil S(\gF(r)) \pil S(\gG) . \]
By induction  $S(\gG)$ is finitely generated of dimension 
$\leq nm + \delta - 1$. Since $S(\gF(r))$ is finitely generated of dimension
$nm + \delta$ the same holds for $S(\gF)$. 
\end{proof}

\section{Free complexes over the exterior algebra}
\label{ExtalgSec}
In this section we give elementary properties of free complexes over
the exterior algebra $E(W^*)$ where $W$ is a finite dimensional vector space.
If $V$ is another finite dimensional vector space we define a functor 
associating to a complex of free $E(W^*)$-modules a complex of free 
$\Sym(V \te W^*)$-modules. This functor was introduced by Cox
and Materov in \cite{CM}
and is here of central importance.

\subsection{Starting with the vector space $W$}
Let $W$ be a finite dimensional vector space of dimension $w$ and
$E = E(W^*) = \oplus_{i = 0}^{w} \wedge^i W^*$ the
exterior algebra on the dual space $W^*$. We consider $W$ to have degree
$1$, so $\wedge^i W^*$ has degree $-i$. Let $\oe = \Hom_\kk(E,\kk)$ be
the dualizing module for $E$. This is both a free (hence projective)
and injective module over $E$. 

In this section $\EE^\dt$ shall denote a complex of graded free
left modules over the exterior algebra. In the setting we consider
it will be natural to write the terms as
\begin{equation} \label{ExtalgLigEp}
 \EE^p = \oplus_{j \in \hele} \oe(j-p) \te_\kk N^p_{p-j} 
\end{equation}
where $N^p_{p-j}$ is a vector space. Note that there may be a nonzero
map $\oe(j) \pil \oe(j^\prime)$ iff $j \geq j^\prime \geq j - w$. 
Such a complex is {\it linear} if there is some $k$ such that 
\[ \EE^p = \oe(k-p) \te_\kk N^p_{p-k} \]
for all $p$. Let $M = \oplus_{i \in \hele} M_i$ be a graded $\Sym(W)$-module.
Then we may associate a linear complex of free $E$-modules
\begin{equation} \label{ExtLigBR}
\bR(M) : \cdots \pil \oe(-i) \te_\kk M_{i} 
\mto{d} \oe(-i-1) \te_\kk M_{i+1} \pil \cdots .
\end{equation}
Letting $y_1, \ldots, y_w$ be a basis for $W$ and the $y_i^*$ a dual basis
for $W^*$, the differential $d$ is defined by
\[ u \te m \mapsto \sum_{i = 1}^w u y_i^* \te y_i m.\]
Any linear complex is a shift of $\bR(M)$ for a graded $\Sym(W)$-module $M$. 
We define the {\it dimension} of the linear
complex to be the Krull dimension of $M$. 

If $\EE^\dt$ is a minimal complex over $E$, meaning that all maps
$\oe(i) \te N^p_{-i} \pil \oe(i) \te N^{p+1}_{-i}$ between terms
with the same twist $i$, are zero, then there is a filtration
$\EE_{\leq k}^\dt$ of $\EE^\dt$ given by
\[ \EE^p_{\leq k} = \oplus_{j \leq k} \oe(j-p) \te_\kk N^p_{p-j}. \]
The quotient $\EE^\dt_{\leq k} / \EE^\dt_{\leq k-1}$ is a linear complex.
It is the {\it $k$'th linear strand} of $\EE^\dt$. 

The complex $\EE^\dt$ is a {\it Tate resolution} if it is acyclic, i.e.
all homology groups $H^p(\EE^\dt)$ vanish, and its terms are finitely
generated modules.
Such a complex will be unbounded to the left and right, if it is not
nullhomotopic.
These are central objects of the next sections. Let us note
that such are easily constructed from any finitely generated $E$-module
$N$ by taking a free resolution of $N$, and an injective resolution 
of $N$ by
modules of the form $\oplus_{j \in \hele} \oe(j) \te_\kk N_{-j}$, and then 
splicing these resolutions together. 

\medskip
Let $\Wp \sus W$ be a subspace, giving a quotient exterior algebra
\[ E = E(W^*) \pil E({\Wp}^*) = \Ep. \]
Note that the dualizing module 
$\oep = \Hom_\kk(\Ep, \kk) = \Hom_E(\Ep, \oe)$. Given a complex $\EE^\dt$ 
over the exterior algebra $E$ we obtain a restricted
complex $\Hom_E(\Ep, \EE^\dt)$
over the exterior algebra $\Ep$. If the term $\EE^p$ is given
as in (\ref{ExtalgLigEp}) then this complex has terms
\[ \Hom_E(\Ep,\EE^p) = \oplus_{j \in \hele} \oep(j-p) \te_\kk N^p_{p-j}. \]

\subsection{Introducing the vector space $V$}
Let $V$ be a finite dimensional vector space and $S = \Sym(V \te W^*)$
the symmetric algebra. Cox and Materov \cite{CM} define a $\kk$-linear 
functor $\funTo{V}_W$ from finitely generated graded free $E$-modules 
with homomorphisms of degree zero, to finitely generated free $S$-modules
with homomorphisms of degree zero, as follows.
 A map 
\begin{equation} \label{ExtalgLigbeta}
 \oe(j)\mto{\beta} \oe(j^\prime)
\end{equation}
is given on the generator of the first module as
\[ \wedge^w W  \pil \wedge^{w+j^\prime - j} \, W \]
or equivalently a map
\[ \kk \mto {\underline{\beta}} 
\wedge^{j-j^\prime} W^*.\]
By the Cauchy formula \cite[2.3.2, 3.2.3]{We} there is a natural inclusion 
of $\wedge^d V \te \wedge^d W^*$ into $\Sym_d(V \te W^*)$. 
This holds in any characteristic.
Consider $\oplus_i\wedge^i V$ as a coalgebra and denote the
comultiplication map by $\delta$. 
We define 
the map
\[ \wedge^j V \te S(-j) \mto{\funTo{V}_W(\beta)}
 \wedge^{j^\prime} V \te S(-j^\prime) \]
to be given by 
\begin{eqnarray*}
\wedge^j V \te \kk & \mto{\delta \te \underline{\beta}} & 
\wedge^{j^\prime} V \te \wedge^{j-j^\prime} V \te \wedge^{j-j^\prime} W^*  \\
& \pil & \wedge^{j^\prime} V \te \Sym(V \te W^*)_{j-j^\prime}.
\end{eqnarray*}

Given a complex $\EE^\dt$ of free graded $E$-modules with
terms as in (\ref{ExtalgLigEp}), we then
obtain a complex $\funTo{V}_W (\EE^\dt)$ of free graded $S$-modules
with terms
\[ \funTo{V}_W (\EE^p) = \oplus_{j \in \hele} 
\wedge^{j-p} V \te S(p-j) \te_\kk N^p_{p-j}. \] 


\begin{lemma} (Duality.) \label{ExtalgLemDual}
Let $\beta$ be the map of (\ref{ExtalgLigbeta}) and $n = \dim_\kk V$.
Then 
\begin{equation} \label{ExtalgLigDualb}
\Hom_S(\funTo{V}_W(\beta), S(-n) \te \wedge^n V) = 
\funTo{V}_W(\Hom_\kk(\beta, \wedge^{m+1} W)(n)). 
\end{equation}
Hence we have
a commutative diagram of functors starting from complexes of
free graded $E$-modules
\[ \begin{CD} \EE^\dt @>{\funTo{V}_W}>> \dt \\
@V{\Hom_\kk(-, \wedge^{m+1} W)(n)}VV @VV{\Hom_S(-, S(-n) \te \wedge^n V)}V \\
\dt @>{\funTo{V}_{W}}>> \dt
\end{CD} \]
\end{lemma}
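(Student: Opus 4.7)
The plan is to verify (\ref{ExtalgLigDualb}) on a single basic map $\beta$ and then deduce the functorial diagram formally. Since both sides of (\ref{ExtalgLigDualb}) are $\kk$-linear and additive, it suffices to treat one map $\beta : \oe(j) \pil \oe(j^\prime)$ encoded by a datum $\underline{\beta} : \kk \pil \wedge^{j-j^\prime} W^*$.

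First I would identify the common source and target. Using the perfect pairing $\wedge^k V \te \wedge^{n-k} V \pil \wedge^n V$, the $S$-dual
\[ \Hom_S(\wedge^k V \te S(-k),\, S(-n) \te \wedge^n V) \iso \wedge^{n-k} V \te S(k-n), \]
so the left hand side of (\ref{ExtalgLigDualb}) becomes a map $\wedge^{n-j^\prime} V \te S(j^\prime - n) \pil \wedge^{n-j} V \te S(j-n)$. On the right hand side, the isomorphism $\oe = \Hom_\kk(E,\kk) \iso E \te \wedge^{m+1} W$ (with an appropriate shift) gives a natural identification $\Hom_\kk(\oe(j),\, \wedge^{m+1}W) \iso \oe(-j)$. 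Hence $\Hom_\kk(\beta, \wedge^{m+1}W)(n)$ is a map $\oe(n-j^\prime) \pil \oe(n-j)$, and applying $\funTo{V}_W$ yields a map between exactly the same $S$-modules as the left hand side.

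Next I would show that the two maps coincide. By the definition of $\funTo{V}_W$, both are assembled from the comultiplication $\delta$ on the exterior coalgebra $\wedge^\bullet V$ together with the linear datum $\underline{\beta}$ (respectively its transpose $\underline{\beta}^\vee : \wedge^{j-j^\prime} W \pil \kk$). Two compatibilities are at work: the comultiplication on $\wedge^\bullet V$ is dual, under the top pairing into $\wedge^n V$, to the multiplication on $\wedge^\bullet V$; and the Cauchy inclusion $\wedge^d V \te \wedge^d W^* \inpil \Sym_d(V \te W^*)$, which holds in arbitrary characteristic, interchanges $\kk$-duality on $W$ with $S$-duality on $\Sym(V \te W^*)$ up to the appropriate twist by $\wedge^{m+1} W$ and $\wedge^n V$. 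Combining these identifications, the $S$-dual of $\funTo{V}_W(\beta)$ equals the map obtained by running the same construction on the transposed datum, which is precisely $\funTo{V}_W(\Hom_\kk(\beta, \wedge^{m+1}W)(n))$.

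The commutative diagram of functors on complexes of free graded $E$-modules then follows formally, since $\funTo{V}_W$ is applied termwise and on each differential, and each of the two duality operations is equally termwise. The main obstacle is bookkeeping of canonical identifications and signs: fixing the self-duality isomorphism $\oe \iso \Hom_\kk(\oe,\wedge^{m+1}W)$ (which requires a trivialization of $\wedge^{m+1}W$), the sign conventions in the pairing $\wedge^k V \te \wedge^{n-k} V \pil \wedge^n V$, and the compatibility of the characteristic-free Cauchy inclusion with dualization. Once these are arranged consistently, the identity is a direct consequence of the naturality of comultiplication under duality.
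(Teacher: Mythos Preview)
Your proposal is correct and follows the same approach as the paper: identify the source and target $S$-modules on both sides via the pairings $\wedge^k V \te \wedge^{n-k} V \pil \wedge^n V$ and $\Hom_\kk(\oe(j),\wedge^{m+1}W) \iso \oe(-j)$, and then observe that the maps agree. The paper's own proof is considerably terser---it simply records that the left side is $\wedge^{n-j}V \te S(-n+j) \vpil \wedge^{n-j^\prime}V \te S(-n+j^\prime)$ and that $\Hom_\kk(\beta,\wedge^{m+1}W)(n)$ is $\oe(n-j) \vpil \oe(n-j^\prime)$, then declares the equality---whereas you go further and sketch why the actual maps (not just the underlying modules) coincide via the duality between multiplication and comultiplication on $\wedge^\bullet V$ and the compatibility of the Cauchy inclusion with dualization; this extra care is warranted and fills in what the paper leaves implicit.
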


\begin{proof}
The left side of (\ref{ExtalgLigDualb}) is 
\[ \wedge^{n-j} V \te S(-n+j) \vpil \wedge^{n-j^\prime} V \te S(-n+j^\prime). \]
On the other hand $\Hom_\kk(\beta, \wedge^{m+1} W) (n)$ is
\[ \oe(n-j) \vpil \oe(n-j^\prime). \]
And so we get the equality.
\end{proof}

For the subspace $W^\prime$ of $W$ let 
$S^\prime = \Sym(V \te (W^\prime)^*)$.
The following now relates the functor $\funTo{V}$ and
restriction.

\begin{lemma} (Restriction.) \label{ExtLemRes}
Let $\beta$ be the map of (\ref{ExtalgLigbeta}). Then 
\[ \funTo{V}_W(\beta) \te_S S^\prime = 
\funTo{V}_{W^\prime}(\Hom_E(\Ep, \beta)). \]
As a consequence we have a commutative diagram of functors
starting from complexes of free graded $E$-modules
\[ \begin{CD} \EE^\dt @>{\funTo{V}_W}>> \dt \\
@V{\Hom_E(\Ep, -)}VV @VV{- \te_S \Sp}V \\
\dt @>{\funTo{V}_{\Wp}}>> \dt
\end{CD} \]
\end{lemma}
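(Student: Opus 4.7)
The plan is to reduce everything to the definition of $\funTo{V}_W$ on a single map $\beta:\oe(j)\pil \oe(j^\prime)$, then invoke naturality of the Cauchy inclusion $\wedge^d V\te \wedge^d W^*\hookrightarrow \Sym_d(V\te W^*)$ in the variable $W^*$; the consequence on complexes is formal.

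First I would unravel both sides. Dualization of the inclusion $W^\prime\hookrightarrow W$ gives a surjection $W^*\twoheadrightarrow (W^\prime)^*$, which induces the quotient $E\twoheadrightarrow E^\prime$ and the quotient $S\twoheadrightarrow S^\prime$. Under the identification of $\Hom_E(\oe(j),\oe(j^\prime))$ with $\Hom_\kk(\kk,\wedge^{j-j^\prime}W^*)$, passing from $\beta$ to $\Hom_E(\Ep,\beta)$ corresponds to composing $\underline{\beta}\colon \kk\pil \wedge^{j-j^\prime}W^*$ with the natural projection $\wedge^{j-j^\prime}W^*\pil \wedge^{j-j^\prime}(W^\prime)^*$.

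Next, by the very definition of $\funTo{V}_W$, the map $\funTo{V}_W(\beta)$ is the composition
\[
\wedge^j V\te S(-j)\mto{\delta\te \underline{\beta}\te 1} \wedge^{j^\prime}V\te \wedge^{j-j^\prime}V\te \wedge^{j-j^\prime}W^*\te S(-j)\pil \wedge^{j^\prime}V\te S(-j^\prime),
\]
where the last arrow uses the Cauchy inclusion $\wedge^{j-j^\prime}V\te \wedge^{j-j^\prime}W^*\hookrightarrow \Sym_{j-j^\prime}(V\te W^*)$. The key point is that this Cauchy inclusion is functorial in $W^*$: the diagram
\[
\begin{CD}
\wedge^{d}V\te \wedge^{d}W^* @>>> \Sym_d(V\te W^*) \\
@VVV @VVV \\
\wedge^{d}V\te \wedge^{d}(W^\prime)^* @>>> \Sym_d(V\te (W^\prime)^*)
\end{CD}
\]
commutes, the left vertical arrow being induced by $W^*\pil (W^\prime)^*$ and the right by tensoring with $S^\prime$ over $S$. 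Tensoring $\funTo{V}_W(\beta)$ with $S^\prime$ over $S$ therefore factors exactly as $\funTo{V}_{W^\prime}$ applied to the map obtained from $\beta$ by post-composing $\underline{\beta}$ with $\wedge^{j-j^\prime}W^*\pil \wedge^{j-j^\prime}(W^\prime)^*$, which by the first step is $\Hom_E(\Ep,\beta)$. This gives the identity on single arrows.

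Finally, the commutative square of functors on complexes is immediate. On each term $\EE^p=\oplus_j \oe(j-p)\te_\kk N^p_{p-j}$, the identification $\Hom_E(\Ep,\oe(j-p))=\oep(j-p)$ yields the equality $\funTo{V}_{W^\prime}(\Hom_E(\Ep,\EE^p))=\funTo{V}_W(\EE^p)\te_S S^\prime$ termwise, and the differentials agree by the first part applied to each component map. The main (and essentially only) obstacle is the naturality of the Cauchy map in $W^*$; once this is granted, everything is a bookkeeping of shifts and Koszul signs, which the conventions already encode.
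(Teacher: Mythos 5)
Your proof is correct and follows the same route as the paper: both reduce to the observation that $\Hom_E(\Ep,\beta)$ amounts to post-composing $\underline{\beta}$ with the projection $\wedge^{j-j^\prime}W^*\pil\wedge^{j-j^\prime}(\Wp)^*$, and that $-\te_S\Sp$ applied to $\funTo{V}_W(\beta)$ effects the same composition. You simply make explicit the naturality of the Cauchy inclusion in $W^*$, which the paper leaves implicit in its closing sentence.
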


\begin{proof}
The map $\oe(j) \pil \oe(j^\prime)$ is given by a map
\[ \kk \mto{\underline{\beta}} \wedge^{j-j^\prime} W^*.\]
The map $\Hom_E(E^\prime, \beta)$ is given by composing this with the
natural projection $\wedge^{j-j^\prime} W^*\pil \wedge^{j-j^\prime} {\Wp}^*$.
The map $\funTo{V}_W(\beta)$ is given on generators by 
\[ \wedge^j V \mto{\delta \te \underline{\beta}} \wedge^{j^\prime} \te
\wedge^{j - j^\prime} V \te \wedge^{j-j^\prime} W^* \]
and so $\funTo{V}_W(\beta) \te_S S^\prime$ 
is given by the composition of this with 
$\wedge^{j-j^\prime} W^*\pil \wedge^{j-j^\prime} {\Wp}^*$. It is clear that this
map equals $\funTo{V}_{\Wp}( \Hom_E(\Ep, \beta))$.
\end{proof}

\section{Zipping}
\label{MainSec}
In this section we recall the notion of Tate resolution, and
then we extend the main result
of Cox and Materov \cite{CM} on how Tate resolutions may be zipped
with the exterior coalgebra on a vector space. This is
applied to show how various old and recent resolutions from the
Eagon-Northcott complex, 1962, to tensor complexes, 2011, may be
obtained by zipping a Tate resolution with an exterior coalgebra.

\subsection{Tate resolutions of coherent sheaves}
\label{ZipSekTate}
The dualizing complex, \cite[V §0]{HaRD}, on the category of complexes
of coherent sheaves on the projective space $\pW$ of dimension $m$
is $\ompW[m]$, where $\ompW = \opW(-m-1)$.
For a complex of coherent sheaves $\gF^\dt$ on $\pW$ the dual complex
is 
\[ (\gF^{\dt})^\vee = \RR\SHom_\opW(\gF^\dt, \ompW[m]). \]
If we have a complex of vector bundles $\gE^\dt$ together with a 
quasi-isomorphism $\gE^\dt \pil \gF^\dt$ this may be computed as
\[ \SHom_\opW(\gE^\dt, \ompW[m]). \]
Serre duality \cite[Thm.III.5.1]{HaRD} gives dualities between the 
hypercohomology modules:
\[ \HH^p(\pW, \gF^\dt) = \HH^{-p}(\pW, (\gF^{\dt})^\vee)^*. \]
We use the following notation for the graded $\Sym(W)$-modules
\[ \HH^p_*(\pW, \gF^\dt) = \oplus_{i \in \hele} \HH^{p}(\pW, \gF^\dt(i)). \]
Since we will work much in the context of a vector space 
$V$ having dimension $n$, it will be convenient
to define
\[ (\gF^{\dt})^* = (\gF^{\dt})^\vee(n)[-n]. \]

\medskip
If  $\gF$ is a coherent sheaf on $\pW$ there is associated a Tate 
resolution $\TT(\gF)$, see \cite[Section 4]{EFS}, whose terms are given by
\begin{equation} \label{MainLigTp}
\TT^p(\gF) = \oplus_{j \in \hele} \oe(j-p) \te_\kk H^j(\pW, \gF(p-j)).
\end{equation}
In particular note that when $p \gg 0$ this term is simply
$\oe(-p) \te_\kk H^0(\pW, \gF(p))$, so we say it is eventually linear.

Furthermore the $i$'th linear strand of this complex is the 
linear complex \linebreak $\bR(H^i_*(\pW, \gF))[-i]$, associated to the $i$'th
cohomology module. Sending a sheaf to its Tate resolution is 
a functor from the category of coherent sheaves to the 
homotopy category of complexes of free $E$-modules
\[ \TT: \cohpW \pil K(E-\text{free}). \]
More generally if $\gF^\dt$ is a complex of coherent sheaves there is
similarly associated a Tate resolution $\TT(\gF^\dt)$, see
\cite[Thm.3.2.1]{FlDesc}, \cite[Thm.10]{Coa}, and whose
terms are given as in (\ref{MainLigTp}) but now with the cohomology 
modules of $\gF(p-j)$ replaced by the  hypercohomology modules
of $\gF^\dt(p-j)$. It gives a functor from the bounded derived category of
coherent sheaves on $\pW$
\[ \TT : D^b(\cohpW) \pil K(E-\text{free}). \]
The image is in the subcategory of $K(E-\text{free})$ consisting
of Tate resolutions.
In fact $\TT$ gives an equivalence of categories between the bounded
derived category above and the category of Tate resolutions up to homotopy,
loc.cits.
\begin{lemma} \label{MainLemTTrel}
\begin{itemize} Let $\gF^\dt$ be a complex of coherent sheaves on 
$\pW$. 
\item[a.] $\TT(\gF^\dt[k]) = \TT(\gF^\dt)[k]$.
\item[b.] $\TT(\gF^\dt(n)) = \TT(\gF^\dt)(n)[n]$.
\item[c.] $\TT((\gF^{\dt})^\vee) = \tHom_\kk(\TT(\gF^\dt), \wedge^{m+1} W)$.
\item[d.] $\tHom_S(\funTo{V}_W(\TT(\gF^\dt)), S(-n) \te \wedge^n V) = 
\funTo{V}_W(\TT((\gF^{\dt})^*)). $
\end{itemize}
\end{lemma}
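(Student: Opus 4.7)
The plan is to treat parts (a), (b), (c) as direct computations from the term-wise formula
\[
\TT^p(\gF^\dt)\,=\,\bigoplus_{j\in\hele}\oe(j-p)\te_\kk \HH^j(\pW,\gF^\dt(p-j))
\]
recalled from \cite{EFS}, \cite{FlDesc}, \cite{Coa}, and then to obtain (d) by chaining (a)--(c) with Lemma \ref{ExtalgLemDual}. No new geometric input is needed beyond Serre duality; the proof is essentially bookkeeping with shifts.

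For (a), the shift $\gF^\dt[k]$ replaces $\HH^j(\gF^\dt[k](i))$ by $\HH^{j+k}(\gF^\dt(i))$. Substituting $j'=j+k$ in the defining formula and identifying the corresponding homological position $p+k$ shows that the $p$-th term of $\TT(\gF^\dt[k])$ is the $(p+k)$-th term of $\TT(\gF^\dt)$, i.e.\ $\TT(\gF^\dt)[k]$. For (b), the internal twist $\gF^\dt(n)$ replaces $\gF^\dt(p-j)$ by $\gF^\dt(p-j+n)$; after substituting $j'=j+n$ both the degree shift inside $\oe$ and the homological index need to be absorbed, and together they produce exactly the composition of an internal shift $(n)$ on the exterior side with a homological shift $[n]$, matching the right-hand side.

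For (c), the plan is to feed Serre duality
\[
\HH^j(\pW,(\gF^\dt)^\vee(p-j))\,\iso\,\HH^{m-j}(\pW,\gF^\dt(j-p))^*
\]
into $\TT^p((\gF^\dt)^\vee)$ and compare to the $p$-th term of $\Hom_\kk(\TT(\gF^\dt),\wedge^{m+1}W)$. Reindexing $j\leftrightarrow m-j$ together with the identification $\Hom_\kk(\oe(k),\wedge^{m+1}W)\iso\oe(-k)$ (up to the standard shift coming from $\wedge^{m+1}W$ sitting in degree $m+1$) matches the underlying graded modules term by term. The main obstacle is then to verify that the BGG-type differentials on $\TT(\gF^\dt)$ are compatible with $\kk$-linear dualization; this follows from the functoriality of the Tate resolution construction in \cite{FlDesc}, \cite{Coa}, since $\kk$-linear dualization on the exterior side corresponds to the derived dual $(-)^\vee$ on the geometric side under the equivalence of categories recalled in Subsection \ref{ZipSekTate}.

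For (d), assemble the identifications
\[
\TT((\gF^\dt)^*)\,=\,\TT((\gF^\dt)^\vee(n)[-n])\,\overset{\text{(a),(b)}}{=}\,\TT((\gF^\dt)^\vee)(n)\,\overset{\text{(c)}}{=}\,\Hom_\kk(\TT(\gF^\dt),\wedge^{m+1}W)(n),
\]
then apply the functor $\funTo{V}_W$ and invoke Lemma \ref{ExtalgLemDual} to conclude
\[
\funTo{V}_W(\TT((\gF^\dt)^*))\,\iso\,\Hom_S(\funTo{V}_W(\TT(\gF^\dt)),S(-n)\te\wedge^nV).
\]
This final step is purely formal once (a), (b), (c) are in hand, so the only substantive content of the lemma sits in (c), and there only in matching the differentials.
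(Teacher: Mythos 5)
Your treatment of parts (a) and (b) is essentially the paper's: the paper dismisses them as trivial, and your term-by-term substitution is the sort of direct verification meant. Part (d) is also handled the same way — chaining (a)–(c) with Lemma~\ref{ExtalgLemDual} — and is fine.

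The gap is in part (c). Your term-matching via Serre duality does show that $\TT^p((\gF^\dt)^\vee)$ and the $p$-th term of $\tHom_\kk(\TT(\gF^\dt),\wedge^{m+1}W)$ are isomorphic as graded $E$-modules, but this says nothing about the differentials. You then assert that compatibility of the differentials ``follows from the functoriality of the Tate resolution construction, since $\kk$-linear dualization on the exterior side corresponds to the derived dual $(-)^\vee$ on the geometric side under the equivalence.'' That last clause \emph{is} the statement of part (c); invoking it to justify the differential check is circular. Functoriality of $\TT$ says it preserves morphisms, not that it intertwines the two specific duality functors $(-)^\vee$ and $\tHom_\kk(-,\wedge^{m+1}W)$ — that is an additional compatibility which must be proved, not read off.

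The paper closes this gap by a dévissage argument you do not supply: it observes that $\TT$ is an exact (triangulated) functor, checks (c) directly for vector bundles (where both sides are transparent), and then, given a morphism $\phi\colon\gF^\dt\to\gG^\dt$ for which (c) is known, forms the two distinguished triangles — one from applying $\tHom_\kk(-,\wedge^{m+1}W)$ to the triangle $\TT(\gF^\dt)\to\TT(\gG^\dt)\to\TT(\cone\phi)$, the other from applying $\TT$ to the dual triangle using $\cone(\phi^\vee)\iso(\cone\phi)^\vee[1]$ — and uses the two-out-of-three property to conclude (c) for $\cone\phi$. Since any bounded complex of coherent sheaves is built by iterated cones from (shifted) vector bundles, this finishes the proof. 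You should replace the appeal to functoriality with an argument of this kind, or an equivalent rigidity argument showing that a degreewise isomorphism of the underlying terms of two minimal Tate resolutions forces an isomorphism of complexes.
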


\begin{proof}
Parts a. and b. are trivial. 
For part c. note that $\TT$ is a functor of triangulated 
categories, \cite[Thm.3.2.1]{FlDesc}. 
Let $\gF^\dt \mto{\phi} \gG^\dt$ be a morphism of complexes.
It gives a distinguished triangle
\[ \gF^\dt \pil \gG^\dt \pil \cone \phi \pil \gF^\dt[1]. \]
Then we get a distinguished triangle 
\[\TT(\gF^\dt) \pil \TT(\gG^\dt) \pil \TT(\cone \phi) \pil \TT(\gF^\dt)[1]. \]
Dualizing we get a distinguished triangle
\[ \Hom_\kk(\TT(\cone \phi)[-1], \wedge^{m+1} W) 
\vpil \Hom_\kk(\TT(\gF^\dt), \wedge^{m+1} W) \vpil
\Hom_\kk(\TT(\gG^\dt), \wedge^{m+1} W). \]
Also there is a distinguished triangle
\[ \TT(\cone \phi^\vee) 
\vpil \TT((\gF^{\dt})^\vee) \vpil
\TT((\gG^{\dt})^\vee).\]
Note that $\cone(\phi^\vee) = (\cone \phi)^\vee [1]$. 
If part c. holds for $\gF^\dt$ and $\gG^\dt$, we get a morphism between 
these two 
distinguished triangles where two of the maps are isomorphisms. Hence
the third is also an isomorphism and so part c. holds for $\cone \phi$.
Now part c. clearly holds for vector bundles. Any complex of vector
bundles may be built up as cones from complexes of smaller length.
This proves part c.
Part d. follows by Lemma \ref{ExtalgLemDual} since 
\[ \Hom(\TT(\gF^\dt), \wedge^{m+1}W)(n) \iso 
\TT((\gF^{\dt})^\vee)(n) = \TT((\gF^{\dt})^*) \]
using parts a., b. and c. above.
\end{proof}

\subsection{Zipping and the method of Lascoux}

\begin{definition} Let $\gF^\dt$ be a complex of coherent sheaves on $\pB$
and $V$ a finite dimensional vector space over $k$. 
The complex $\funTo{V}_W (\TT(\gF^\dt))$ is the {\it zip complex} or
the {\it Weyman complex} (in \cite{CM}) of 
the Tate resolution of $\gF^\dt$ w.r.t. the exterior coalgebra on $V$. 
We say that this complex is obtained by {\it zipping} the Tate resolution
$\TT(\gF^\dt)$
and the exterior coalgebra of the vector space $V$, or
simply with the vector space $V$.
Since the Tate resolution is unique up to isomorphism, so is the zip complex.
It is also a minimal complex since $\TT(\gF^\dt)$ is minimal.
\end{definition}

Our main general observation is the following theorem which is an
extension of the main Theorem 1.4 of Cox and Materov \cite{CM}. It is
also close to the Basic Theorem 5.1.2 in Weyman's book \cite {We}. 
Its proof is given in Section \ref{BeviszipSec}.
After the statement we explain how it extends and relates to these.
Recall the setup of Subsection \ref{RegSubsecBasic}.

\begin{theorem} \label{MainTheMain}
Let $\gF^\dt$ be a complex of coherent sheaves on the projective space $\pW$. 
The graded complexes $\Gamma(X, \RR q_*(\gO_Z \te p^* \gF^\dt))$ and 
$F_\dt = \funTo{V}_W (\TT(\gF^\dt))$
are isomorphic in the derived category of graded
$\Sym (V \te W^*)$-modules.

In particular the terms of $F_\dt$ are given by  
\[ F_p = \bigoplus_j \wedge^{p+j} V \te S(-p-j) \te 
\Hbb^j (\pB, \gF^\dt(-p-j)) \] and the homology of $F_ \dt$ is the
hypercohomology
\begin{equation} \label{MainLigSymm}
H_p (F_\dt) = \Hbb^{-p} (\pW, \gS(\gF^\dt)).
\end{equation}
\end{theorem}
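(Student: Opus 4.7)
The plan is to resolve $\gS(\gF^\dt)$ via the sheafified Koszul complex (\ref{RegLigSekv}) tensored with $\gF^\dt$, and then identify the resulting bicomplex, termwise and differentially, with the zip complex $F_\dt$. Since each $\wedge^i(V\te\opW(-1))$ is locally free, tensoring (\ref{RegLigSekv}) with $\gF^\dt$ produces a complex $\mathcal{K}^\dt$ with
\[ \mathcal{K}^{-i} \;=\; \Sym(V\te W^*)(-i)\te\wedge^i V\te\opW(-i)\te\gF^\dt \]
that is quasi-isomorphic to $\gS(\gF^\dt)$. Applying $\RR\Gamma(\pW,-)$ and invoking Lemma \ref{RegLemHyper} identifies the graded $S$-module $\Gamma(X,\RR q_*(\gO_Z\te p^*\gF^\dt))$ with $\RR\Gamma(\pW,\mathcal{K}^\dt)$.

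Next I would take a $\kk^*$-equivariant Cartan--Eilenberg injective resolution of $\mathcal{K}^\dt$ in quasi-coherent sheaves, apply $\Gamma(\pW,-)$, and form the associated bicomplex. The spectral sequence that computes vertical sheaf cohomology first has
\[ E_1^{-i,j} \;=\; \Sym(V\te W^*)(-i)\te\wedge^i V\te\Hbb^j(\pW,\gF^\dt(-i)), \]
and collecting along the antidiagonal $p=j-i$ recovers exactly the summands of $F_p=\funTo{V}_W(\TT^{-p}(\gF^\dt))$ given by (\ref{MainLigTp}). Thus the terms of $F_\dt$ and of the total complex of the $E_1$ page agree.

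The heart of the argument, and the main obstacle, is matching differentials. The horizontal $d_1$ on the $E_1$ page is the Koszul differential of (\ref{RegLigSekv}) applied to hypercohomology; a direct calculation should show it coincides with the differential that $\funTo{V}_W$ produces from the BGG-linear strand $\bR(\Hbb^j_*(\pW,\gF^\dt))[-j]$ of the Tate resolution. The higher differentials $d_r$ for $r\geq 2$ should correspond to the connecting maps between distinct linear strands of $\TT(\gF^\dt)$, encoding the extension data that assembles $\gF^\dt$ from its cohomology sheaves.

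Rather than identify every $d_r$ directly, I would proceed by d\'evissage. Both $\gF^\dt\mapsto\RR\Gamma(X,\RR q_*(\gO_Z\te p^*\gF^\dt))$ and $\gF^\dt\mapsto\funTo{V}_W(\TT(\gF^\dt))$ are triangulated functors on $D^b(\cohpW)$, the former as a composition of standard derived functors and the latter by the triangulated nature of $\TT$ (\cite{FlDesc,Coa}) combined with additivity of $\funTo{V}_W$. The bicomplex totalization furnishes a natural comparison morphism $F_\dt\to\RR\Gamma(\pW,\mathcal{K}^\dt)$; it suffices to verify this is a quasi-isomorphism on line bundles $\gF^\dt=\opW(k)$, where the Tate resolution reduces to explicit Koszul data and both sides can be computed directly. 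The general case then follows by induction on the length of $\gF^\dt$ using mapping cones and the triangulated five-lemma, and the homology identification (\ref{MainLigSymm}) is immediate from Lemma \ref{RegLemHyper}.
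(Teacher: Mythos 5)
Your sketch shares the overall geometric starting point with the paper's argument (Koszul resolution of $\gS(\gF^\dt)$, match of terms with the zip complex, reduction to Lemma \ref{RegLemHyper} for the homology statement), but the middle of the argument is not sound as written, and the d\'evissage pivot leaves the crucial step unjustified.

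The spectral-sequence plan does not produce the complex $F_\dt$. The terms of the $E_1$ page do coincide with the terms of $F_\dt$, and the $d_1$ differential matches the linear part of the differential of $F_\dt$, but the higher differentials $d_r$ ($r\geq 2$) of a spectral sequence act on the \emph{subquotient} pages $E_r$, not on $E_1$: there is no canonical way to assemble $(E_1,d_1,d_2,\dots)$ into a single complex with differential $d_1+d_2+\dots$, so you cannot recover the differential of $F_\dt$ from the spectral sequence in general. This is precisely why the theorem is not formal. Your d\'evissage escape route then needs, as its input, a \emph{natural} comparison morphism $F_\dt\to\RR\Gamma(\pW,\mathcal{K}^\dt)$; you assert that ``the bicomplex totalization furnishes'' it, but this is the heart of the matter and it is not given. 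The obstruction is real: $F_\dt=\funTo{V}_W(\TT(\gF^\dt))$ is a \emph{minimal} complex, and the map from the minimal complex to an explicit bicomplex total complex is only unique up to homotopy, so naturality (which the five-lemma step requires) is not automatic.

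The paper avoids this entirely with an observation your proposal is missing, namely Lemma \ref{BevisLemGVW}: for the specific Koszul resolution $\gK_\dt$ of $\gO_Z$ over $\VV$, one has the strict equality of functors
\[ \Gamma(\VV,\,\gK_\dt\te p^*(-))\;=\;\funTo{V}_W\circ\bR\circ\Gamma_*(-). \]
Because of this, after replacing $\gF^\dt$ by a quasi-coherent injective resolution $\gI^\dt$, the complex of global sections of the Koszul bicomplex is \emph{on the nose} $\funTo{V}_W\bigl(\tot\circ\bR\circ\Gamma_*(\gI^\dt)\bigr)$. Since $\funTo{V}_W$ is additive and sends cones to cones, it carries the homotopy equivalence between $\tot\circ\bR\circ\Gamma_*(\gI^\dt)$ and its minimalization $\TT(\gF^\dt)$ to a homotopy equivalence with $F_\dt$. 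This is what removes any need to identify higher $d_r$'s or to construct a natural transformation from $F_\dt$ into a derived-functor value. If you want to salvage your outline, the missing ingredient is exactly this identification of the global sections of the Koszul bicomplex with $\funTo{V}_W$ applied to the BGG/Tate bicomplex; once you have that, the rest collapses to the paper's argument.
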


In the notation of Weyman's book \cite[Section 5.1]{We} we work
in the special case that his projective variety $V$ is $\pW$, and 
the sequence $0 \pil \gS \pil \gE \pil {\mathcal T} \pil 0$ is the
sequence (\ref{RegLigVW}). The new thing Cox and Materov
noted, compared to the Basic Theorem 5.1.2 in \cite{We} in this setting,
is that the complex $F_\dt$ may be obtained by {\it first}
taking the Tate resolution of the coherent sheaf $\gF$ and {\it then} 
zipping it with $V$, i.e.
applying the functor $\funTo{V}_W$. We thus have a factorization of 
a functor.
Apart from this, the above is an extension 
of 5.1.2 in this setting
in the sense that we phrase it for complexes of coherent
sheaves, while Weyman does this only for vector bundles. This is however
an extension that would not require much modification in Weyman's arguments.

The main new feature above compared to Theorem 1.4 of Cox and Materov
is that they prove it under the assumption that $\dim_\kk V \leq 
\dim \pW$. This is outside the range of our most significant applications
where in general $V$ has dimension larger than $\pW$. 
The reason for their restriction is that their proof relies heavily on it, 
but they say that the statement probably holds also when $\dim_\kk V > \pW$. 
Also Cox and Materov work only in $\text{char}(\kk) = 0$, while
the above is stated characteristic free. 
Their proof, using representation theory, depends heavily on 
characteristic zero.
Cox and Materov also only consider coherent sheaves and not complexes
and hypercohomology, which will be quite essential in our applications.

\medskip
In \cite[Prop. 1.3]{CM} they show that when $\dim_\kk V$ is greater than
the dimension of a coherent sheaf $\gF$ then $\gF$ is determined by  
$\funTo{V}_W(\TT(\gF))$. We state the following more general:
\begin{conjecture}
If $\dim_{\kk} V$ is greater than the dimensions of the cohomology
sheaves of $\gF^\dt$, then $\gF^\dt$ is determined (up to isomorphism
in the derived category)
by $\funTo{V}_W(\TT(\gF^\dt))$. 
\end{conjecture}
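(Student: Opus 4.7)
The plan is to recover the Tate resolution $\TT(\gF^\dt)$ from the zip complex $F_\dt = \funTo{V}_W(\TT(\gF^\dt))$; by the equivalence between $D^b(\cohpW)$ and the homotopy category of Tate resolutions \cite[Thm.3.2.1]{FlDesc}, this will determine $\gF^\dt$ up to isomorphism in the derived category. First I would exploit the minimality of $F_\dt$. By Theorem \ref{MainTheMain}, each term decomposes as $F_p = \bigoplus_{k=0}^n \wedge^k V \te S(-k) \te \HH^{k+p}(\pW, \gF^\dt(-k))$, and since the summand $\wedge^k V \te S(-k)$ has minimal generators in internal degree $k$, the multiplicities $\dim_\kk \HH^{k+p}(\pW, \gF^\dt(-k))$ are readable from the $\kk$-dimensions of $F_p$ in each internal degree (divided by $\binom{n}{k}$). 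This extracts all dimensions $h^j(\gF^\dt(i)) := \dim_\kk \HH^j(\pW, \gF^\dt(i))$ in the strip $i \in [-n,0]$.

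Next I would leverage the hypothesis $\delta := \max_q \dim H^q(\gF^\dt) < n$ to extend this strip to the full cohomology table. Using the hypercohomology spectral sequence $E_2^{p,q} = H^p(\pW, H^q(\gF^\dt)) \Rightarrow \HH^{p+q}(\pW, \gF^\dt)$ together with $H^p(\pW, H^q(\gF^\dt)(i)) = 0$ for $p > \dim H^q(\gF^\dt) \leq \delta < n$, one argues inductively that the strip $i \in [-n,0]$ determines the Hilbert polynomials of each $H^q(\gF^\dt)$ (a polynomial of degree $\leq \delta < n$ is determined by $n$ consecutive values) and then the full cohomology table of each cohomology sheaf in the regularity range provided by Serre vanishing. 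Pasting these via the spectral sequence yields the whole hypercohomology table of $\gF^\dt$, hence all terms of $\TT(\gF^\dt)$.

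Finally I would promote this recovery of terms to a recovery of differentials. The functor $\funTo{V}_W$ is faithful on Hom-spaces between the generators $\oe(j), \oe(j')$ with $j,j' \in [0,n]$, because the map $\delta \te \underline\beta$ on the generator $\wedge^j V \te \kk$ depends injectively on $\underline\beta \in \wedge^{j-j'}W^*$, thanks to the injectivity of the comultiplication $\delta:\wedge^j V \to \wedge^{j'}V\te\wedge^{j-j'}V$ when $j \leq n$. Combined with the uniqueness of the acyclic continuation of a Tate resolution beyond any finite truncation, this determines $\TT(\gF^\dt)$ up to homotopy from $F_\dt$. The main obstacle is the middle step: cleanly turning the finite strip of hypercohomology data into the entire table, which requires careful bookkeeping with the hypercohomology spectral sequence at the boundary of the strip and exploiting the dimension bound $\delta<n$ in the spirit of Lemma \ref{RegLemKoh} and Proposition \ref{RegProDim}.
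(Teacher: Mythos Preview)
This statement is presented in the paper as a \emph{conjecture}, not a theorem; the paper gives no proof, only remarking that Cox and Materov \cite[Prop.~1.3]{CM} settle the special case of a single coherent sheaf. There is therefore no proof in the paper to compare your proposal against.

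On the proposal itself: steps 1--2 are fine and correctly extract the numbers $\dim_\kk\HH^j(\pW,\gF^\dt(-k))$ for $k\in[0,n]$ from the graded Betti numbers of $F_\dt$. Step 3, however, does not work as written: knowing the Hilbert polynomials of the $H^q(\gF^\dt)$ does not determine their individual cohomology tables in intermediate twists, and the spectral sequence argument you sketch does not pin down the $E_2$-page from the abutment. In any case step 3 is aimed only at the \emph{terms} of $\TT(\gF^\dt)$ outside the window, which is useless without the differentials.

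Step 4 is where the real content lies, and here there are two gaps. First, faithfulness of $\funTo{V}_W$ on $\Hom(\oe(j),\oe(j'))$ is correct, but it does not by itself let you invert the construction: the zip complex is handed to you as an abstract graded $S$-complex, and the identification of the degree-$j$ generators of $F_p$ with $\wedge^j V\otimes N^p_j$ is not intrinsic to that data. An arbitrary isomorphism of zip complexes need not be $\funTo{V}_W$ of a map of Tate resolutions, so faithfulness does not immediately yield that isomorphic zip complexes come from isomorphic Tate resolutions. (One could try to repair this using the natural $\GL(V)$-equivariance of $\funTo{V}_W$, which makes the tensor splitting canonical.) Second, even granting recovery of the subquotient $\TT(n,0)$, your appeal to ``uniqueness of the acyclic continuation'' is not justified: the differentials in $\TT(n,0)$ are truncations of the full differentials of $\TT$, and in general a finite subquotient of a Tate resolution does not determine it. It is precisely here that the hypothesis $\dim_\kk V>\dim H^q(\gF^\dt)$ must do real work, and your sketch does not explain how.
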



\subsection{When the zip complex becomes a resolution}

Let $\gE$ be a locally Cohen-Macaulay sheaf of pure dimension $d$. 
This is equivalent to $H^i_* (\pW, \gE)$ being of finite length 
for $0 < i < d$. Note that if $d = \dim \pW = m$ this is equivalent
to $\gE$ being a vector bundle.

\begin{lemma} \label{MainLemEdual}
Let $\gE$ be locally Cohen-Macaulay sheaf on $\pW$ of dimension $d$.
Then $\gE^*$ (forget the cohomological position) 
is $k$-regular iff $H^{d-i}(\pW, \gE(i-n-k)) = 0$
for $i > 0$. In this case we say that $\gE$ is $(d-n-k)$-coregular.
\end{lemma}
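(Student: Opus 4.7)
The plan is to unpack what $\gE^*$ looks like as an ordinary sheaf using the Cohen-Macaulay hypothesis, and then translate the cohomology of twists of $\gE^*$ into cohomology of twists of $\gE$ via Serre duality.

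First, since $\gE$ is locally Cohen-Macaulay of dimension $d$ on the $m$-dimensional space $\pW$, the local Ext sheaves $\gext^j(\gE, \ompW)$ vanish outside $j = m-d$. Consequently $\gE^\vee = \RR\SHom_\opW(\gE, \ompW[m])$ collapses to the single sheaf $\gext^{m-d}(\gE, \ompW)$ placed in cohomological degree $-d$, and so after forgetting the overall cohomological shift in $\gE^* = \gE^\vee(n)[-n]$ one identifies $\gE^*$ with the honest coherent sheaf $\gext^{m-d}(\gE, \ompW)(n)$, which is itself locally Cohen-Macaulay of dimension $d$.

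Next, I would apply Serre duality to this sheaf, using the collapse of the local-to-global Ext spectral sequence (which is forced by the Cohen-Macaulay hypothesis in a single row) to rewrite the global Ext as cohomology of the sheaf $\gext^{m-d}(\gE^*, \ompW)$; then I would invoke Cohen-Macaulay biduality $\gext^{m-d}(\gext^{m-d}(\gE, \ompW), \ompW) \iso \gE$, noting that the twist by $\opW(n)$ inside the inner $\gext^{m-d}$ passes to a twist by $\opW(-n)$ after the outer $\gext^{m-d}$, so that $\gext^{m-d}(\gE^*, \ompW) \iso \gE(-n)$. Putting all this together should produce a natural isomorphism
\[ H^i(\pW, \gE^*(k-i))^* \iso H^{d-i}(\pW, \gE(i-n-k)) \]
valid for all integers $i \geq 0$.

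Finally, the conclusion follows immediately: by definition $\gE^*$ is $k$-regular precisely when the left-hand side vanishes for every $i > 0$, which via the displayed isomorphism is equivalent to the vanishing of $H^{d-i}(\pW, \gE(i-n-k))$ for every $i > 0$, yielding the stated criterion. The only step that genuinely requires care is the bookkeeping of twists and shifts through $(-)^\vee$, $(n)$ and $[-n]$, together with the sign of the twist that emerges from Cohen-Macaulay biduality; once the collapse of $\gE^\vee$ to a single sheaf is observed, no substantive obstacle remains.
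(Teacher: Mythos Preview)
Your proposal is correct and follows essentially the same route as the paper: both use the locally Cohen--Macaulay hypothesis to see that $\gE^\vee$ is concentrated in a single cohomological degree, and then apply Serre duality to translate $k$-regularity of $\gE^*$ into the stated vanishing for $\gE$. The paper's version is slightly more compact because it invokes derived-category Serre duality $\HH^{p}(\pW,\gE^\vee(t))^* \cong \HH^{-p}(\pW,\gE(-t))$ directly, whereas you unpack this into classical Serre duality, collapse of the local-to-global Ext spectral sequence, and Cohen--Macaulay biduality --- but the content is the same.
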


\begin{proof} Forgetting the cohomological position,
$H^i(\pW, \gE^*(k-i))$ equals $H^i(\pW, \gE^\vee(n+k-i))$. 
If $\gE$ has dimension $d$,
then $\gE^\vee$ is in cohomological position $m-d$. Hence the latter cohomology
group is  
\[ \HH^{i+m-d}(\pW, \gE^\vee(n+k-i)) = H^{d-i}(\pW, \gE(i-k-n))^* \]
by Serre duality.
\end{proof}

\begin{proposition} \label{MainProRes}
Let $\gF$ be a coherent sheaf on $\pW$ and $V$ a 
vector space of dimension $n$. 
\begin{itemize}
\item [a.] The zip complex 
$\funTo{V}_W(\TT(\gF))$ is a free resolution iff $\gF$ is
a $1$-regular coherent sheaf. In this case 
it is a free resolution of the $S$-module $S(\gF)$. 

\item [b.] This zip complex
is a minimal free resolution of a Cohen-Macaulay module iff
$\gF$ is a locally Cohen-Macaulay sheaf of pure dimension which is: 
\begin{itemize}
\item [i.] $1$-regular and,
\item [ii.] $(d-n-1)$-coregular, where $d$ is the dimension of $\gF$.  
\end{itemize}
\end{itemize}
\end{proposition}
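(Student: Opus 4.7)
The plan is to reduce part (a) to Theorem \ref{MainTheMain} and to reduce part (b) to part (a) via the duality property in Lemma \ref{MainLemTTrel}(d).

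For part (a), Theorem \ref{MainTheMain} furnishes two pieces of data: the explicit form of the terms
\[ F_p = \bigoplus_j \wedge^{p+j} V \te S(-p-j) \te H^j(\pW, \gF(-p-j)), \]
and the homology identification $H_p(F_\dt) = \HH^{-p}(\pW, \gS(\gF))$. The zip complex is a free resolution iff (i) $F_p = 0$ for $p < 0$, and (ii) $\HH^i(\pW, \gS(\gF)) = 0$ for $i \geq 1$. For the \emph{only if} direction, setting $p = -1$ gives $F_{-1} = \bigoplus_{j \geq 1} \wedge^{j-1} V \te S(1-j) \te H^j(\pW, \gF(1-j))$; assuming $\dim V$ is large enough that every $\wedge^{j-1} V$ with $j \leq \dim \pW$ survives, the vanishing $F_{-1} = 0$ extracts exactly $1$-regularity of $\gF$. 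For the \emph{if} direction, Mumford's extension of regularity forces $H^j(\pW, \gF(k-j)) = 0$ for all $k, j \geq 1$, giving $F_p = 0$ for $p < 0$; and since $\gF(1)$ is then $0$-regular, Lemma \ref{RegLemReg}(a) implies $\gS(\gF)(1) = \gS(\gF(1))$ is $0$-regular, and Mumford again forces $H^i(\pW, \gS_d(\gF)) = 0$ for $i \geq 1$ and every $d \geq 0$. Finally $H_0(F_\dt) = H^0(\pW, \gS(\gF)) = S(\gF)$ by the definition of $S(\gF)$.

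For part (b), a finitely generated $S$-module $M$ is Cohen-Macaulay iff it admits a free resolution whose $S$-dual (suitably twisted) is again a free resolution of some module, that is, has homology concentrated in a single cohomological position. By Lemma \ref{MainLemTTrel}(d), the dualized complex $\Hom_S(F_\dt, S(-n)\te\wedge^n V)$ is canonically identified with the zip complex $\funTo{V}_W(\TT(\gF^*))$. So the zip complex of $\gF$ is a minimal free resolution of a Cohen-Macaulay module iff the zip complex of $\gF^*$ is also a minimal free resolution (it is automatically minimal, by construction). Now applying part (a) to $\gF^*$ forces two things. First, $\gF^*$ must itself be quasi-isomorphic to a sheaf concentrated in one cohomological degree; unwinding the definition of the derived dual, this is exactly the condition that $\gF$ be locally Cohen-Macaulay of pure dimension, and then $d$ is intrinsically defined. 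Second, this sheaf must be $1$-regular, which by Lemma \ref{MainLemEdual} taken with $k = 1$ translates to $\gF$ being $(d-n-1)$-coregular.

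The main obstacle is the converse direction in part (a): when $\dim V$ is small, the factors $\wedge^{j-k} V$ appearing in the formula for $F_{-k}$ can vanish for reasons unrelated to the cohomology of $\gF$, so $F_p = 0$ in negative degrees no longer detects the full range of cohomological vanishing required for $1$-regularity. I would handle this by recording the mild size hypothesis on $V$ that is present anyway in the intended applications, namely $\dim V \geq \dim \pW - 1$ (and the analogous condition involving the dimension of $\gF$ in part (b)); under this the $k = 1$ instance alone recovers the full $1$-regularity condition $H^j(\pW, \gF(1-j)) = 0$ for $1 \leq j \leq \dim \pW$, and both ``iff'' statements close cleanly.
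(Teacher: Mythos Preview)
Your approach matches the paper's proof essentially line for line. For (a), the paper also uses Theorem \ref{MainTheMain} together with Lemma \ref{RegLemReg} for the forward direction, and for the converse argues that failure of $1$-regularity makes $F_{-1}$ nonzero, whence minimality of $F_\dt$ forces nonzero homology in some negative degree. For (b), the paper likewise dualizes via Lemma \ref{MainLemTTrel}(d) and translates $1$-regularity of $\gF^{*}$ through Lemma \ref{MainLemEdual}; your version is in fact more careful than the paper's in isolating why $\gF^{*}$ must be concentrated in a single cohomological degree, i.e.\ why $\gF$ is forced to be locally Cohen--Macaulay of pure dimension.

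Your closing worry is genuine, and the paper makes the very same implicit leap: the inference ``$H^{j}(\pW,\gF(1-j))\neq 0$ for some $j\geq 1$ $\Rightarrow$ $F_{-1}\neq 0$'' tacitly needs $\wedge^{j-1}V\neq 0$, i.e.\ $j\leq n+1$. Without any size hypothesis the ``only if'' direction really does fail: for instance with $n=0$ and $\gF=\gO_{\PP^{2}}(-2)$ (or $n=1$ and $\gF=\gO_{\PP^{3}}(-2)$) the zip complex is identically zero, hence trivially a resolution, while $\gF$ is not $1$-regular. The paper does not record the hypothesis because every application in the paper has $n\geq m$; your proposed bound $n\geq m-1$ is exactly what closes the equivalence.
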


\begin{proof}
a. If $\gF$ is a $1$-regular coherent sheaf, then all cohomology groups
$H^i(\pW, \gS(\gF))$ vanish for $i \neq 0$ by Lemma \ref{RegLemReg}.
Hence by Theorem \ref{MainTheMain} $F_\dt$ is a resolution. 
If $\gF$ is not $1$-regular. Then
$H^j(\pW, \gF(1-j))$ is nonzero for some $j \geq 1$. But then 
$F_{-1}$ is nonzero. Since $F_\dt$ is a minimal complex, it
has nonzero homology $H_p(F^\dt)$ for some negative $p$. 
But also $H_0(F^\dt) = S(\gF)$
is nonzero. 
Hence $F_\dt$ is not a resolution.

b. By part a. and Lemma \ref{MainLemTTrel} d. the dual complex 
$\Hom(F^\dt, S(-n) \te \wedge^n V)$
is a 
resolution iff $\gF^*$ is $1$-regular. But by Lemma \ref{MainLemEdual}
this is equivalent to $\gF$ being $(d-n-1)$-coregular.
\end{proof}

Following \cite{ES} a vector bundle $\gE$ on $\pW$ is said to have 
{\it supernatural cohomology} if there is a sequence 
\[ - \infty = r_{m+1} < r_m < \cdots < r_1 < r_0 = +\infty \]
such that the $i$-th cohomology $H^i(\pW, \gE(r))$ is nonzero
only if $r$ is in the interval $\langle r_{i+1}, r_{i} \rangle$. 
In particular note that the Hilbert polynomial $P$ of $\gE$ must be 
\[ P(n) = c/m! (n-r_1)(n-r_2) \cdots (n-r_m) \]
for some constant $c$ (which is the rank of $\gE$), 
and that the regularity of $\gE$ is $r_1 +1$ and
the coregularity is $m+r_m-1$.  

More generally
a locally Cohen-Macaulay sheaf $\gE$ of dimension $d$ on $\pW$ is said
to have supernatural cohomology if the above holds with the index $m$
replaced by $d$.
Note that $\gE$ has supernatural cohomology iff its 
Tate resolution is ${\it pure}$,
i.e. each cohomological term $\TT^p(\gE)=\oe(j-p)\te N^p_{p-j}$ has only 
one twist $j-p$ occurring.

\begin{corollary} Let $\gE$ be a locally Cohen-Macaulay sheaf of $d$ on $\pW$
with supernatural cohomology and root sequence as above.
The zip complex $\funTo{V}_W(\TT(\gE))$ is a resolution iff $r_1 \leq 0$.
It is a resolution of a Cohen-Macaulay module iff
$-n \leq r_d < r_1 \leq 0$. 
In any case the complex is pure with degree sequence given by the
complement $[0,n]\backslash \{- r_1, \ldots, -r_d\}$. 
\end{corollary}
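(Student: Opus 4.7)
The plan is to invoke Proposition~\ref{MainProRes} and translate its cohomological hypotheses into numerical conditions on the root sequence via the supernatural structure. For part~(a), the zip complex is a resolution iff $\gE$ is $1$-regular, i.e.\ $H^i(\pW, \gE(1-i)) = 0$ for $i \geq 1$, which by supernaturality reads $1 - i \notin (r_{i+1}, r_i)$ for $i = 1, \ldots, d$. If $r_1 \leq 0$, the strict decrease $r_j \leq r_1 - (j-1) \leq 1 - j$ shows $1-j$ lies at or above the upper endpoint of the open interval. For the converse, assuming $r_1 \geq 1$, one inducts from $j = d$ downwards (using $r_{d+1} = -\infty$ as the base, which forces $r_d \leq 1-d$); at step $k$ the available alternative forces $r_{d-k} \leq k+1-d$, and at $k = d-1$ this yields $r_1 \leq 0$, a contradiction.

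For part~(b), one additionally requires $(d-n-1)$-coregularity by Proposition~\ref{MainProRes}~(b); by Lemma~\ref{MainLemEdual} this means $H^j(\pW, \gE(d-n-1-j)) = 0$ for $0 \leq j \leq d-1$, i.e.\ $d-n-1-j \notin (r_{j+1}, r_j)$. A symmetric argument applies: if $r_d \geq -n$, the strict decrease gives $r_{j+1} \geq r_d + (d-j-1) \geq d-n-1-j$, placing the value at or above the upper endpoint. The converse starts from the constraint $r_1 \geq d-n-1$ at $j=0$ (using $r_0 = +\infty$) and propagates upward to force $r_d \geq -n$. The main obstacle is carefully managing the binary alternative at each step of these inductions: the ``wrong'' branch must be ruled out by showing it contradicts an inequality already established at an earlier step.

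Finally, by Theorem~\ref{MainTheMain}, $F_p = \bigoplus_j \wedge^{p+j} V \te S(-p-j) \te H^j(\pW, \gE(-p-j))$. Supernaturality ensures that for each $p$ at most one $j$ contributes, namely the unique index with $p+j \in (-r_j, -r_{j+1})$, so $F_p$ is pure, generated in a single degree $p+j$. Since $\wedge^{p+j} V$ vanishes outside $[0,n]$, the attained values of $p+j$ sweep out precisely the integers in $[0,n] \setminus \{-r_1, \ldots, -r_d\}$, the complement of the endpoints of these open intervals, giving the stated degree sequence.
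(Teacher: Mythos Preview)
Your proof is correct and follows the same route as the paper: invoke Proposition~\ref{MainProRes} and translate $1$-regularity and $(d-n-1)$-coregularity into root-sequence inequalities. The paper's own argument is much terser because it simply quotes the observations stated immediately before the Corollary, namely that a supernatural sheaf has regularity $r_1+1$ and coregularity $d+r_d-1$; from these the equivalences $r_1\le 0$ and $r_d\ge -n$ are one-line checks. Your inductive arguments re-derive these facts from scratch, which is fine but more laborious than necessary.

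One small slip of phrasing: in the forward direction of part~(b) you show $r_{j+1}\ge d-n-1-j$, which places the value $d-n-1-j$ at or \emph{below the lower} endpoint of $(r_{j+1},r_j)$, not ``at or above the upper endpoint'' as you wrote. The inequality itself is correct.
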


\begin{proof}
That the complex is pure is clear. The 
other parts follow 
from Proposition \ref{MainProRes} and the observations above on
the regularity and coregularity of $\gE$: The sheaf $\gE$ is $1$-regular
iff $r_1 + 1 \leq 1$, and it is $d-n-1$-coregular iff $d-n-1 \leq d+r_d-1$.
\end{proof}

\subsection{Complexes from the literature}
Here is how some notable old and recent resolutions
in the literature are obtained by zipping an appropriate Tate
resolution and a vector space $V$ of dimension $n$.

\begin{example} {\it Eagon-Northcott, 1962.}
Denote by $D_i(W^*)$ the $i$'th divided powers of $W^*$. (In characteristic
zero this is isomorphic to $\Sym_i(W^*)$.) Let $\tilde{D}_i(W^*) =
\wedge^{m+1}W^* \te D_i(W^*)$.
The structure sheaf $\gO_{\pW}$ has Tate resolution:
\begin{alignat}{4}
\cdots \pil   & \oe(n) \te \tilde{D}_{n-m-1}(W^*) \pil 
\oe(n-1) \te \tilde{D}_{n-m-2}(W^*) \pil \cdots \notag \\
\pil  & \oe(m+2) \te \tilde{D}_1(W^*)  \notag 
 \pil  \oe(m+1)  \te \tilde{D}_0(W^*) 
\mto{\beta}   \oe  \notag  
\pil  \oe(-1) \te \Sym_1(W) \pil \cdots  \notag
\end{alignat}
Let $S = \Sym(V \te W^*)$ and assume $\dim_\kk V \geq \dim_\kk W$
or equivalently $n \geq m+1$. 
We zip with the exterior powers of $V$ and
obtain the Eagon-Northcott complex:
\[ \wedge^n V \te S \te \tD_{n-m-1}(W^*) \pil \cdots \pil
\wedge^{m+2} V \te S \te \tD_1(W^*) \pil \wedge^{m+1} V \te S \te \tD_0(W^*) 
\mto{\alpha} S.\]
From the explicit form of $\beta$ and the explicit way zipping is done, 
the image of $\alpha$ is seen to be the ideal of maximal minors of the
generic map
\[ S \te V \pil S \te W. \]
Also since the structure sheaf $\opW$ has supernatural cohomology 
with root sequence $-1, -2 ,\ldots, -m$
this complex is a free resolution of a Cohen-Macaulay ring.
\end{example}

\begin{example} {\it Buchsbaum-Rim, 1964, and Buchsbaum-Eisenbud, 1973.}
The twisted sheaf $\opW(r)$ when $r \geq 1$ has Tate resolution:
\begin{align} 
\cdots \pil  & \oe(r+m+1) \te \tD_0(W^*) \pil \oe(r) \te \Sym_0(W) 
\pil  \oe(r-1) \te \Sym_1(W) \pil \cdots \notag \\
\pil  & \oe \te \Sym_{r}(W) \pil \cdots  
\pil   \oe(-i) \te \Sym_{r+i}(W) \pil
\cdots  \notag
\end{align}
Zipping this with $V$, we obtain when $r = 1$ the Buchsbaum-Rim complex
and when $r \geq 2$ the Buchsbaum-Eisenbud complexes:
\begin{align} \cdots \pil & \wedge^{r+m+1} V \te S \te \tD_0(W^*) \pil 
\wedge^r V \te S \te \Sym_0(W) \pil \wedge^{r-1} V \te S \te \Sym_1(W)
\pil \cdots \notag \\
\pil & \wedge^0 V \te S \te \Sym_r(W). \notag 
\end{align}
The root sequence of $\opW(r)$ is $-r-1, -r-2, \ldots, -r-m$.
Hence these complexes 
are resolutions (assuming $r \geq 1$) and they are resolutions 
of Cohen-Macaulay modules exactly when $n \geq r+m$. 
\end{example}

\begin{example} {\it Pure resolutions of modules supported on determinantal 
varieties, 2007.} 
Let $\lambda$ be a partition into $m$ parts and $\gQ$ the dual of the 
tautological
rank $m$ subbundle on $\pW$. 
When $\text{char}(\kk) = 0$
the Schur bundle $S_\lambda(\gQ)$ has supernatural cohomology with 
root sequences given by 
\[ - \lambda_1-m, - \lambda_2 - m+1, \ldots,  - \lambda_m - 1, \]
see \cite[Thm.5.6]{ESW}
Thus any root sequence may be obtained for a suitable partition $\lambda$. 
It is a $1$-regular coherent sheaf when $\lambda_m \geq -1$. By zipping
its Tate resolution with the vector space $V$ of dimension
$\lambda_1 + m +1$, we obtain the second construction in \cite{EFW}
of pure resolutions of Cohen-Macaulay modules supported on 
determinantal varieties.
\end{example}

\begin{example} {\it Tensor complexes, 2011.}
Let $W_1, \ldots, W_r$ be vector spaces of positive dimensions 
$w_1, \ldots, w_r$.
On the Segre embedding
\[ \PP(W_1) \times \PP(W_2) \times \cdots \times \PP(W_r) 
\sus \PP(W_1 \te \cdots \te W_r) \]
consider the line bundle
\[ \gO_{\PP(W_1)}(u_1) \sqte \gO_{\PP(W_2)}(u_2) \sqte \cdots
\sqte \gO_{\PP(W_r)}(u_r). \]
When $u_i + w_i -1 \leq u_{i+1}$, this is a locally Cohen-Macaulay
sheaf on $\PP(W_1 \te \cdots \te W_r)$ with supernatural cohomology.
Its root sequence is the union of the intervals 
$\cup_{i = 1}^r  [-u_i - w_i +1, -u_i -1]$. Zipping its Tate resolution
with a vector space $V$ 
we get a pure complex.
When $u_1 \geq -1$
it becomes a pure resolution, and when $n \geq u_r + w_r -1$ it is a
pure resolution of a Cohen-Macaulay module. This gives the tensor complexes 
of \cite{BEKSTe}, with pinching weights $(0;u_1, \ldots, u_r)$.  
\end{example}

\section{Examples} \label{Eks2Sec}

In this section we first briefly recall the notion of squarefree
modules and the two dualities, standard duality and Alexander duality,
we have on complexes of free squarefree modules. 

We then consider the triplet of pure free squarefree complexes of
Example 2.2 in \cite{FlTr} and show in two detailed examples how two
of them
may be obtained by i) starting with a complex of coherent sheaves on 
$\pW$, ii) zipping its Tate resolution 
with a vector space $V$ to get av complex of free
$\Sym(V \te W^*)$-modules, and iii) taking a suitable general quotient
map $V \te W^* \pil V$ to get a complex of pure free squarefree 
$\Sym(V)$-modules.

\subsection{Squarefree modules and dualities}
We briefly recall basic notions. For more detail see Section 1
of \cite{FlTr}.
Let $\nat = \{0,1,2, \ldots, \}$, and $e_i$ the $i$-th coordinate
vector in $\nat^{n}$. An $\nat^n$-graded module $M$ over 
$\kk[x_1, \ldots, x_n]$ is {\it squarefree} if the multiplication map 
\[ M_{\bfd} \mto{\cdot x_i} M_{\bfd + e_i} \] is an isomorphism whenever
the $i$'th coordinate $d_i > 0$. 

Let $\bfen = (1,1,\ldots, 1)$.
The {\it Alexander dual} $\bfA(M)$ is the squarefree module such that when 
$ {\mathbf 0} \leq \bfd \leq \bfen$:
\begin{itemize}
\item $ \bfA(M)_{\bfd} = \Hom_\kk(M_{\bfen - \bfd}, \kk)$.
\item If $d_i = 0$ then 
\[ \bfA(M)_\bfd \mto{\cdot x_i} \bfA(M)_{\bfd + e_i} \]
is the dual of 
\[ M_{\bfen - \bfd - e_i} \mto{\cdot x_i} M_{\bfen - \bfd}. \]
\end{itemize}

Given a squarefree module $M$ there is associated a linear complex 
$\gL(M)$ of free squarefree modules with terms
\[ \gL^i(M) = \oplus_{|\bfd| = i} (M_\bfd)^\circ \te_\kk S \]
where we sum over all $0,1$-vectors $\bfd$ of total degree $i$, and
$(M_\bfd)^\circ$ is $M_\bfd$ but considered to have multidegree $\bfen - \bfd$. 
The differential is given by 
\[ m^\circ \te s \mapsto \sum_{j;\bfd_j = 0} (-1)^{\alpha(j,\bfd)}
(x_j m)^\circ \te x_j s\]
where $\alpha(j,\bfd)$ is the number of $i < j$ with $d_i = 1$.
\medskip

On the homotopy category of complexes of free squarefree modules there is the 
standard dual of the complex $F_\dt$ of free modules
\[ \DD(F_\dt) = \Hom_S(F_\dt, \omega_S). \]
The Alexander dual $\AA(F_\dt)$ is defined as a complex of free
squarefree modules for which there is a quasi-isomorphic map to
$\bfA(F_\dt)$. Yanagawa \cite{Ya} shows that the third iterate
$(\AA \circ \DD)^3$ is isomorphic to the $n$'th iterate $[n]$ of the
translation functor. The complex $F_\dt$ is said to belong
to a {\it triplet of pure free squarefree complexes} if all three of 
\[ F_\dt, \quad (\AA \circ \DD)(F_\dt), \quad (\AA \circ \DD)^2(F_\dt) \]
are pure when considered as singly-graded modules.
Since 
$(\AA \circ \DD) (\DD(F_\dt))$ is the dual $\DD ((\AA \circ \DD)^2)(F_\dt)$
(up to translation of complexes), 
we see that a pure complex $F_\dt$ belongs to such a triplet iff
$(\AA \circ \DD)$ applied to both $F_\dt$ and $\DD(F_\dt)$ are both pure.

The functor $\AA \circ \DD$ rotates the homological invariants of
$F_\dt$. Whether $(\AA \circ \DD)(F^\dt)$ is pure can then be checked
on the homology modules of $F_\dt$ using the following, which is 
\cite[Theorem 3.8]{Ya}.

\begin{lemma} \label{EksLemLinSt}
The $i$'th linear strand of $(\AA \circ \DD)(F_\dt)$ is 
$\gL(H^i(F_\dt))[n-i]$. 
\end{lemma}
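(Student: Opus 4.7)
The plan is to reduce to the case where $F_\dt$ is concentrated in a single cohomological degree, and then identify the $i$'th linear strand by transferring the computation through the Yanagawa equivalence between squarefree $S$-modules and squarefree modules over the exterior algebra $E$ on $x_1, \ldots, x_n$.

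First, I would reduce to the case $F_\dt \simeq M[-i]$ for a single squarefree module $M$. For a general $F_\dt$, the good truncation triangles
\[ \tau_{< i} F_\dt \lpil \tau_{\leq i} F_\dt \lpil H^i(F_\dt)[-i] \]
give, after applying the triangulated functor $\AA \circ \DD$, distinguished triangles of complexes of free squarefree modules. Once the base case establishes that $H^j(F_\dt)$ contributes only to the $j$'th linear strand of $(\AA \circ \DD)(F_\dt)$, the $i$'th linear strand is additive across this triangle (the connecting maps must involve different linear strands by degree reasons), and an induction on the length of the cohomological support of $F_\dt$ gives the general statement.

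Second, for the base case $F_\dt = M[-i]$, I would compute $(\AA \circ \DD)(M[-i])$ explicitly. Take a minimal free squarefree resolution $P_\dt \pils M$; applying $\Hom_S(-, \omega_S)$ termwise represents $\DD$, with cohomology computing $\Ext^\ast_S(M, \omega_S)$. Then $\AA$ of this complex is, by definition, a free squarefree complex quasi-isomorphic to $\bfA$ of its cohomology. To identify the $i$'th linear strand with $\gL(M)[n-i]$, I would appeal to the Yanagawa equivalence: a linear complex of free squarefree $S$-modules corresponds to a squarefree $E$-module, with $\gL(N)$ corresponding (up to shift) to $\bfA(N)$ viewed as an $E$-module. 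Under this correspondence, the composition $\AA \circ \DD$ acts on the $E$-side as a functor whose cube is the translation $[n]$, consistent with the remark in the introduction that $(\AA \circ \DD)^3$ is a translation functor, and the statement of the lemma becomes the assertion that extracting the $i$'th linear strand of $(\AA \circ \DD)(F_\dt)$ corresponds, on the $E$-side, to reading off the $i$'th cohomology $H^i(F_\dt)$ as a squarefree $E$-module.

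The hardest part will be the precise bookkeeping of cohomological and internal-degree shifts — the interplay between the shift $[n-i]$ in the statement, the $\bfen - \bfd$ flip built into the definitions of $\gL$ and $\bfA$, and the cube-root relation between $\AA \circ \DD$ and the translation. The cleanest route to pin down conventions is to verify the lemma first on a simple pure complex (for instance the Koszul resolution of the residue field, or a monomial ideal cogenerated in top degree) where both sides are computable by hand, and then let the inductive argument propagate the identity across the derived category of squarefree modules.
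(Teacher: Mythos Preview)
The paper does not actually prove this lemma: the sentence immediately preceding it says ``\ldots using the following, which is \cite[Theorem 3.8]{Ya}.'' So the paper's proof is a bare citation to Yanagawa. Your proposal, by contrast, is an attempt to sketch an independent argument.

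As a sketch it is reasonable in spirit but not yet a proof. Two points deserve more care. First, in your reduction step you assert that after applying $\AA \circ \DD$ to the truncation triangle, the $i$'th linear strand is ``additive across this triangle'' because the connecting maps must land in different linear strands ``by degree reasons.'' This is exactly the nontrivial content: in a distinguished triangle of minimal free squarefree complexes the middle term is generally a twisted extension of the outer two, and you need to know in advance that the contribution of $H^j(F_\dt)$ sits entirely in a single linear strand before you can say the strands do not interact. That is precisely what the base case is supposed to supply, so the induction must be set up so that the base case controls the degrees before you invoke additivity. Second, your base case appeals to ``the Yanagawa equivalence'' and the cube-root relation for $\AA \circ \DD$, but identifying $(\AA \circ \DD)(M[-i])$ with $\gL(M)[n-i]$ \emph{is} the heart of Yanagawa's Theorem 3.8; invoking the equivalence here is close to assuming what you want to prove. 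If you want a self-contained argument, you would need to unwind $\AA$ applied to $\RR\Hom_S(M,\omega_S)$ directly in terms of the multigraded pieces of $M$ and match them against the terms of $\gL(M)$.

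In short: the paper treats this as a black box from \cite{Ya}; your outline points in the right direction but still leans on that same black box at the crucial step.
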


\medskip
Let $S = \kk[x_1, x_2, x_3]$. In \cite{FlTr}, Example 2.2
we showed that there is a triplet of pure free squarefree complexes 
\begin{align} 
G_\dt&: S \xleftarrow{[x_0x_1, x_0x_2, x_1x_2]} S(-2)^3, \notag\\ 
(\AA \circ \DD)(G_\dt)&: S^2 \vpil S(-2)^3 \vpil S(-3), \label{EksLigADG} \\ 
(\AA \circ \DD)^2(G_\dt)&: S(-1)^3 \vpil S(-2)^6 \vpil S(-3)^2. \notag
\end{align}
In the next example we show how to obtain $(\AA \circ \DD)(G_\dt)$
by i) starting from a complex of coherent sheaves on $\pW = \pto$,
ii) zipping its Tate resolution with $V = \langle x_1, x_2, x_3 \rangle$, 
and iii) tensoring the zip complex with $- \te_{\Sym(V \te W^*)} \Sym(V)$, 
where $\Sym(V)$ becomes a module via a general map $V \te W^* \pil V$, 
equivariant for the diagonal matrices in $\GL(V)$. 
In the example following thereafter we show how to obtain $(\AA \circ
\DD)^2(G_\dt)$ in a similar way.

In the following we let $S = \Sym(V \te W^*)$ and for a complex $F_\dt$
of free $S$-modules we let (recall $n$ is the dimension of $V$)
\begin{equation} \label{Eks2LigFdual}
F_\dt^\vee = \Hom_S(F_\dt, S(-n) \te \wedge^n V).
\end{equation} 

\subsection{First example} \label{EksSubsecFirst}

Consider the ideal sheaf $\gI_P$ of a point $P$ in $\PS{2} = \pW$.
The resolution of the twisted sheaf $\gI_P(1)$ is 
\[ \gF^\dt: \gO_{\PS{2}}(-1) \pil \gO_{\PS{2}}^2. \]
Its cohomology table is:
\begin{center}
\begin{tabular}{c c c c c c c c c c c| c}

$\cdots$ & 10 & 6 & 3 & 1 & $\cdot $ & $\cdot $ & $\cdot $ & $\cdot $ & $\cdot $ &
$\cdots$ & 2\\ 
$\cdots$ & 1 & 1 & 1 & 1 & 1 & $\cdot $& $\cdot $& $\cdot $& $\cdot $& $\cdots$ & 1\\
$\cdots$ & $\cdot $& $\cdot $& $\cdot $& $\cdot $& $\cdot $ & 2 & 5 & 9 & 14 & 
$\cdots$ & 0\\
\hline
$\cdots$ & -5 &  -4 & -3 & -2 & -1 & 0 & 1 & 2 & 3 &  $\cdots$ & $d \backslash i$
\end{tabular}.
\end{center}
Letting $E$ be the exterior algebra $\oplus_{i = 0}^3 \wedge^i W^*$, the complex
$\gF^\dt$ 
has Tate resolution:
\begin{equation} \label{EksLigTate1}
\pil \begin{matrix} \oe(6)^6 \\
\oplus \oe(5) \end{matrix}
\pil \begin{matrix} \oe(5)^3 \\
 \oplus \oe(4) \end{matrix}
\pil \begin{matrix} \oe(4) \\
\oplus \oe(3) \end{matrix}
\mto{\beta}
\oe(2) \mto{\alpha}
\oe^2 
\pil \oe(-1)^5
\pil 
\end{equation}
Writing $W = \langle y_0, y_1, y_2 \rangle$ in terms of a basis, we
get a dual basis $W^* = \langle y_0^*, y_1^*, y_2^* \rangle$. 
If $P$ is the point $y_1 = y_2 = 0$, 
the two maps in the Tate resolution above may be written
\begin{equation}
\alpha = \left [ \begin{matrix} y_0^* \wedge y_1^* \\
                               y_0^* \wedge y_2^* 
                 \end{matrix} \right ],
\quad \beta = \left[ \begin{matrix}
                    y_1^* \wedge y_2^*, y_0^* 
                   \end{matrix} \right ].
\end{equation}
Let $V = \langle x_0, x_1, x_2 \rangle$ be a vector space of dimension $3$.
We then zip the Tate resolution (\ref{EksLigTate1}) with the
exterior coalgebra on $V$ and get a complex of $S = \Sym(V \te W^*)$-modules:
\[ F_\dt: \wedge^3 V \te S(-3) \mto{\psi} \wedge^2 V \te S(-2) 
\mto{\phi} S^2 \]
or simply
\[ F_\dt: S(-3) \mto{\psi} S(-2)^3 \mto{\phi} S^2. \]
The first map $\psi$ is determined by $\beta$ and is given by
\begin{equation*}
x_0 \wedge x_1 \wedge x_2 \mapsto 
(x_0 \te y_0^*) \te x_1 \wedge x_2
- (x_1 \te y_0^*) \te x_0 \wedge x_2
+ (x_2 \te y_0^*) \te x_0 \wedge x_1
\end{equation*}
so
\begin{equation*}
\psi = \left[ \begin{matrix} x_0 \te y_0^* \\
                  -x_1 \te y_0^* \\
                   x_2 \te y_0^*
              \end{matrix} \right ].
\end{equation*}
Also the map $\phi$ sends
\begin{equation*}
x_i \wedge x_j \mapsto  \left [ 
\begin{matrix} (x_i \wedge x_j) \te (y_0^* \wedge y_1^*) \\
       (x_i \wedge x_j) \te (y_0^* \wedge y_2^*) 
\end{matrix} \right ].
\end{equation*}
Via the embedding of $\wedge^2 V \te \wedge^2 W^*$ into $\Sym(V \te W^*)_2$
this matrix becomes
\begin{equation*}
\left [ \begin{matrix} (x_iy_0^*)\cdot(x_jy_1^*) - (x_iy_1^*)\cdot(x_jy_0^*) \\
                     (x_iy_0^*)\cdot(x_jy_2^*) - (x_iy_2^*)\cdot(x_jy_0^*)
\end{matrix} \right ].
\end{equation*}
By Theorem \ref{MainTheMain} 
we find that $F_\dt$ has only one
nonvanishing homology module:
\begin{itemize}
\item[] 
\[\hskip -46mm \bullet \hskip 20mm  H_0(F_\dt) = H^0(\pW, \gS(\gI_P(1))).\]
This has dimension $4\cdot2 = 8$ by Proposition \ref{RegProDim}, 
and its smallest degree generator is in degree $0$ by Lemma \ref{RegLemReg}c. 
\end{itemize}

\medskip
We now consider the dual complex:
\begin{equation*}
(\gF^\dt)^*[2]:
\gO_{\PS{2}}^2 \pil \gO_{\PS{2}}(1)
\end{equation*}
where the last term is in cohomological degree $0$.
Its hypercohomology table is:
\begin{center}
\begin{tabular}{c c c c c c c c c c c | c}
$\cdots$ & 14 & 9 & 5 & 2 & $\cdot $& $\cdot $& $\cdot $& $\cdot $& $\cdot $&
$\cdots$ & 1\\ 
$\cdots$ & $\cdot $& $\cdot $& $\cdot $& $\cdot $& 1 & 1 & 1 & 1 & 1 & $\cdots$ & 0\\
$\cdots$ & $\cdot $& $\cdot $& $\cdot $& $\cdot $& $\cdot $& 1 & 3 & 6 & 10 & 
$\cdots$ & -1\\
\hline
$\cdots$ &  -5 & -4 & -3 & -2 & -1 & 0 & 1 & 2 & 3 & $\cdots$ &
$d \backslash i$
\end{tabular}
\end{center}
so its Tate resolution is:
\begin{equation*}
\pil \oe(4)^5 \pil \oe(3)^2 \pil \oe(1) 
\pil \begin{matrix} {\oe} \\ {\oplus \oe(-1)}
     \end{matrix}
\pil \begin{matrix} {\oe(-1)} \\ {\oplus \oe(-2)^3}
     \end{matrix}
\pil \begin{matrix} {\oe(-2)} \\ {\oplus \oe(-3)^6}
     \end{matrix} \pil .
\end{equation*}
Zipping this with the exterior coalgebra on $V$ we get a complex
(recall the notation (\ref{Eks2LigFdual}))
\begin{equation*}
F^\vee_\dt\!: \wedge^3 V \te S(-3)^2 \pil \wedge^1 V \te S(-1) \pil
\wedge^0 V \te S
\end{equation*}
or simply
\[ F^\vee_\dt\!: S(-3)^2 \pil S(-1)^3 \pil S.\]
Again using Theorem \ref{MainTheMain} and Lemma \ref{RegLemKoh} 
its homology is:
\begin{itemize}  
\item[] \begin{align*} \hskip -33mm \bullet \hskip 18mm  H_0(F_\dt^\vee) = &
 \HH^0(\pW, \gS(\gO_{\PS{2}}^2 \pil \gO_{\PS{2}}(1))) \\
= & H^0(\pW, \gS(\gO_P(1)) = S(\gO_P(1)).
\end{align*}
This has dimension $4\cdot 2 - 2 = 6$ by Proposition \ref{RegProDim} 
and its smallest degree generator
is in degree $0$ by Lemma \ref{RegLemReg}c.
\item[] 
\begin{align*}\hskip -35mm \bullet \hskip 18mm   H_1(F_\dt^\vee) = 
& \HH^{-1}(\pW, \gS(\gO_{\PS{2}}^2 \pil \gO_{\PS{2}}(1))) \\
= & H^0(\pW, \gS(\gO_{\PS{2}}(-1))).
\end{align*}
This has dimension $8$ and its smallest degree generator is of degree $2$,
coming from the fact that 
\[ H^0(\pW, \gO_{\PS{2}}(-1)) = 0 = H^1(\pW, \gO_{\PS{2}}(-2))\]
while $H^2(\pW, \gO_{\PS{2}}(-3))$ is nonzero.
\end{itemize}

We now take a general map $V \te W^* \pil V$, 
equivariant for the action of the diagonal matrices in $GL(V)$, 
sending 
\[ x_i \te y_j^* \mapsto \alpha_{ij} x_i, \]
where the $\alpha_{ij} \in \kk$ are general.
Letting $\Sb = \Sym(V)$, we get from $F_\dt$ a complex:
\begin{equation*}
\Fb_\dt: \Sb(-3) \mto{\overline{\psi}} \Sb(-2)^3 \mto{\overline{\phi}}
\Sb^2
\end{equation*}
where
\begin{equation*}
\overline{\psi} = \left [ \begin{matrix} 
\alpha_{00} x_0 \\
-\alpha_{10} x_1 \\
\alpha_{20} x_2
\end{matrix} \right ]
\end{equation*}
and the map $\overline{\phi}$ given by 
\begin{equation*}
\left [ \begin{matrix}
x_1x_2(\alpha_{10}\alpha_{21} - \alpha_{11}\alpha_{20}), 
x_0x_2(\alpha_{00}\alpha_{21} - \alpha_{01}\alpha_{20}),
x_0x_1(\alpha_{00}\alpha_{11} - \alpha_{01}\alpha_{10}) \\
x_1x_2(\alpha_{10}\alpha_{22} - \alpha_{12}\alpha_{20}), 
x_0x_2(\alpha_{00}\alpha_{22} - \alpha_{02}\alpha_{20}),
x_0x_1(\alpha_{00}\alpha_{12} - \alpha_{02}\alpha_{10}) 
\end{matrix} \right ].
\end{equation*}
We show in Section \ref{SqfreeSec} that the kernel of the map 
$V \te W^* \pil V$,
has a basis giving a regular sequence for the homology modules of
these complexes. Hence  $\Fb_\dt$ has only one nonvanishing homology 
module:
\begin{itemize}
\item 
$H_0(\Fb_\dt)$ of dimension $2$ with generator of degree $0$.
\end{itemize}
Also $\Fb_\dt^\vee = F_\dt^\vee \te_S \Sb$ (which is $\DD(\Fb_\dt)$)
has nonvanishing homology modules: 
\begin{itemize}
\item $H_0(\Fb^\vee_\dt)$ of dimension $0$ and with generator of degree $0$,
\item $H_1(\Fb^\vee_\dt)$ of dimension $2$ and with generator of degree
$2$.
\end{itemize}

\medskip
We now apply the functor $\AA \circ \DD$ to these free squarefree complexes.
The homology of $\Fb_\dt$ is transferred to the linear strands of 
$(\AA \circ \DD)(\Fb_\dt)$. By Lemma \ref{EksLemLinSt}, we easily compute:
\begin{equation*}
(\AA \circ \DD)(\Fb_\dt) : \Sb(-3)^2 \pil \Sb(-2)^6 \pil \Sb(-1)^3.
\end{equation*}
The complex $(\AA \circ \DD)^2 (\Fb_\dt)$ is the dual of 
$(\AA \circ \DD)(\Fb^\vee_\dt)$ up to translation. Since the homology
of $\Fb^\vee_\dt$ is transferred to the linear strands of 
$(\AA \circ \DD)(\Fb^\vee_\dt)$ we again easily compute 
\begin{align*}
(\AA \circ \DD)(\Fb^\vee_\dt): & \, \, \Sb(-3) \pil \Sb(-1)^3 \\ 
(\AA \circ \DD)^2(\Fb_\dt): & \, \, \Sb(-2)^3 \pil \Sb.
\end{align*}
Thus $\Fb_\dt, (\AA \circ \DD) (\Fb_\dt)$ and $(\AA \circ \DD)^2 (\Fb_\dt)$
is a triplet of pure free squarefree complexes of the type in
(\ref{EksLigADG}). Looking ahead to the next
Section \ref{ConSec}, Definition \ref{ConDefHomtrip}
and Remark \ref{ConRemDD}, 
our starting complex $\gF^\dt$ corresponds to the homology
triplet 
\[B = \{0,2,3 \}, H = \{0,1,2\}, C = \{0,2 \}. \]
Our triplet of free squarefree complexes above then corresponds to the 
degree triplet
\[ B = \{ 0,2,3\}, \overline{H} = \{ 1,2,3 \}, C = \{ 0,2 \}. \]

\subsection{From complexes on $\pW$ to squarefree complexes}
\label{EksSubsekFrom}
Analyzing why $(\AA \circ \DD)(\Fb^\vee_\dt)$ becomes a pure complex,
the reason is that the {\it dimension} of $H_0(\Fb^\vee_\dt)$ which is 
$0$, is two less than the {\it minimal degree} of a generator of the
previous homology module
$H_1(\Fb^\vee_\dt)$, which is $2$. 
In general the following holds.

\begin{lemma} Let the nonzero homology modules of a squarefree complex
$\Gb_\dt$ be
\[ H_{i_r}(\Gb_\dt), \ldots , H_{i_0}(\Gb_\dt)\]
where $i_r > i_{r-1} > \cdots > i_1 > i_0$. 
Let $H_{i_j}(\Gb_\dt)$ have dimension $d_j$ and minimal degree generator of 
degree $e_j$. 
Then $(\AA \circ \DD)(\Gb_\dt)$ is pure iff $e_{j+1} = d_j + (i_{j+1} - i_j + 1)$
for $j = 0, \ldots, r-1$. 
\end{lemma}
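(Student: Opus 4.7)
The plan is to apply Lemma~\ref{EksLemLinSt}: the $i_j$-th linear strand of $(\AA \circ \DD)(\Gb_\dt)$ coincides with $\gL(H^{i_j}(\Gb_\dt))[n - i_j]$. Hence the complex $(\AA \circ \DD)(\Gb_\dt)$ decomposes, position by position, into the $r+1$ strands arising from the nonzero homology modules $H_{i_0}(\Gb_\dt), \ldots, H_{i_r}(\Gb_\dt)$.

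The first substantive step is to identify the cohomological range of each strand. For a squarefree module $M$ of minimal generator degree $e$ and Krull dimension $d$, the complex $\gL(M)$ with terms $\gL^k(M) = \oplus_{|\bfd|=k}(M_\bfd)^\circ \te_\kk S$ is nonzero precisely for $e \leq k \leq d$: the lower bound is the minimal degree of a generator of $M$, while the upper bound is the largest $|\bfd|$ with $\bfd \in \{0,1\}^n$ and $M_\bfd \neq 0$, which for a squarefree module coincides with $\dim M$. Moreover each nonzero $\gL^k(M)$ is free with generators in total degree $n-k$. In particular the strand $\gL(H^{i_j}(\Gb_\dt))[n-i_j]$ occupies a consecutive interval of cohomological positions of length $d_j - e_j + 1$, and the generator degree at each position within a strand is determined linearly by $i_j$ and the position.

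For $(\AA \circ \DD)(\Gb_\dt)$ to be pure, every cohomological position must carry generators of a single internal degree. Since two different strands $i_j \neq i_{j+1}$ assign different generator degrees to any common position, purity forces the cohomological supports of distinct strands to be disjoint. Combined with the fact that a pure complex in our setup is indexed by a contiguous degree sequence $(d_0, d_1, \ldots, d_r)$ without gaps, consecutive strands must abut exactly: the final position of strand $i_j$ is immediately followed by the first position of strand $i_{j+1}$.

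Translating this adjacency condition through the interval description above then yields the equation $e_{j+1} = d_j + (i_{j+1} - i_j + 1)$, and both directions of the iff follow. The main technical point will be pinning down the precise shift convention in $[n-i_j]$ (so the boundary computation becomes unambiguous) and confirming that ``pure'' in our setup genuinely prohibits zero terms in the middle of the complex, so that disjointness of strands is in fact forced to be adjacency rather than merely non-overlap.
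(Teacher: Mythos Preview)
Your proposal follows exactly the paper's approach: the paper's proof is the single sentence ``This follows from Lemma~\ref{EksLemLinSt},'' and you are fleshing out precisely that derivation by decomposing $(\AA\circ\DD)(\Gb_\dt)$ into the strands $\gL(H^{i_j}(\Gb_\dt))[n-i_j]$, locating each strand's cohomological support, and reading off the adjacency condition.

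One technical point deserves care. Your claim that $\gL^k(M)$ is nonzero \emph{precisely} for $e\le k\le d$ is what forces adjacency rather than mere disjointness, and you correctly justify the endpoints (in particular $\dim M=\max\{|\bfd|:M_\bfd\neq 0\}$ for squarefree $M$). But the absence of internal gaps is not automatic for an arbitrary squarefree module: for instance $M=S(-e_1)/(x_2,x_3,x_4)\oplus S(-e_2-e_3-e_4)/(x_1)$ over $\kk[x_1,\ldots,x_4]$ has $e=1$, $d=3$, yet $h_M^{sq}(2)=0$. The paper is equally silent on this, and in the applications (homology modules coming from the zip construction) such gaps do not arise; your flagged caveat that ``pure'' should forbid zero terms in the interior is exactly the convention needed to make the ``only if'' direction go through as stated. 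With that understood, your argument is complete.
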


\begin{proof}
This follows from Lemma \ref{EksLemLinSt}.
\end{proof}

The following summarizes the procedure used in 
Subsection \ref{EksSubsecFirst} to obtain
a complex $\Gb_\dt$ of $\Sym(V)$-modules from 
a complex $\gG^\dt$ of coherent sheaves on $\PP(W)$.

\medskip
\noindent {\bf Procedure \thetheorem. \llabel{EksLabPro}}

\noindent 1. Start with a complex $\gG^\dt$ of coherent sheaves on $\pW$.

\noindent 2. Zip its Tate resolution with the exterior coalgebra
on a vector space $V$ to get $G_\dt$, a complex of free 
$S = \Sym(V \te W^*)$-modules.

\noindent 3. Let $V$ have basis $x_1, \ldots, x_n$ and take a general
map $V \te W^* \pil V$, equivariant for the diagonal matrices in 
$\GL(V)$. Let $\Sb = \Sym (V)$ and $\Gb_\dt = G_\dt \te_S \Sb$.

\medskip


In order for the passing from $G_\dt$ to $\Gb_\dt$ to behave well
on homology modules, it is necessary that we divide out by
a regular sequence on these homology modules. 
We develop conditions ensuring this in Section \ref{SqfreeSec}.
In Section \ref{ConSec} we develop in detail the
properties of $\gG^\dt$ such that the procedure above gives a complex
$\Gb_\dt$ with $\Gb_\dt, (\AA \circ \DD)(\Gb_\dt)$ and 
$(\AA \circ \DD)^2(\Gb_\dt)$ a triplet of pure free squarefree 
complexes, for any degree triplet as conjectured in 
\cite[Conjecture 2.11]{FlTr}.

\subsection{Second example}

We shall now find a complex $\gE^\dt$ which via Procedure \ref{EksLabPro}
gives the third complex $(\AA \circ \DD)^2(\Gb_\dt)$ of
(\ref{EksLigADG}). 

Let $l_0$ and $l_1$ be linear forms in $\kk[y_0, y_1, y_2, y_3]$ and
$q$ a quadratic form in this ring. We get the Koszul complex
on $\PS{3} = \pW$. 
\[ \gO_{\ptre} \vmto{[l_0, l_1, q]} \gO_{\ptre}(-1)^2 \oplus \gO_{\ptre}(-2)
\vpil \gO_{\ptre}(-2) \oplus \gO_{\ptre}(-3)^2 \vpil \gO_{\ptre}(-4). \]
Twist this complex with $1$ and consider the quotient complex (look
at the end)
\[ \gE^\dt\!: \gO_{\ptre}(-1) \vpil \gO_{\ptre}(-1) \oplus\gO_{\ptre}(-2)^2 
\vpil \gO_{\ptre}(-3) \]
where we let the left term have cohomological degree $0$. 
The cohomology is $H^0(\gE^\dt) = \gO_L(-1)$ where $L$ is the line
in $\ptre$ defined by $l_0$ and $l_1$, and $H^{-1}(\gE^\dt) = \gO_{\PS{3}}(-1)$.
Both of these are $1$-regular sheaves.

Dualizing this complex we obtain
\[ (\gE^\dt)^*[2]\!: \gO_{\ptre}(2) \vpil \gO_{\ptre}(1)^2 \oplus\gO_{\ptre} 
\vpil \gO_{\ptre} \]
where the left term is in cohomological degree $0$. The cohomology
is $H^0((\gE^\dt)^*[2]) = \gO_X(2)$ where $X$ is the
two point subscheme in $\ptre$ which is the intersection of 
the forms $l_0, l_1$ and $q$, and $H^{-1}((\gE^\dt)^*[2])$ is the ideal 
sheaf $\gI_L$. Both of these are $1$-regular
sheaves. From this we compute the hypercohomology table of $\gE^\dt$.
\begin{center}
\begin{tabular}{c c c c c c c c c c c| c}
$\cdots$ & 30 & 16 & 7 & 2 & $\cdot $& $\cdot $& $\cdot $& $\cdot $& $\cdot $&
$\cdots$ & 2\\ 
$\cdots$ & 2 & 2 & 2 &  2 & 2 & 1 & $\cdot $& $\cdot $& $\cdot $&
$\cdots$ & 1\\ 
$\cdots$ & $\cdot $& $\cdot $& $\cdot $& $\cdot $ & $\cdot $ & $\cdot $ 
& 1 & 2 & 3 & $\cdots$ & 0\\
$\cdots$ & $\cdot $& $\cdot $& $\cdot $& $\cdot $& $\cdot $& 1 & 
4 & 10 & 20 & 
$\cdots$ & -1\\
\hline
$\cdots$ &  -5 & -4 & -3 & -2 & -1 & 0 & 1 & 2 & 3 & $\cdots$ &
$d \backslash i$
\end{tabular}.
\end{center}
The Tate resolution of $\gE^\dt$ is then
\begin{equation*}
 \pil \begin{matrix} \oe(5)^{7} \\
\oplus \oe(4)^2 \end{matrix} 
 \pil \begin{matrix} \oe(4)^2  \\
\oplus \oe(3)^2 \end{matrix} 
\pil \oe(2)^2
\pil \begin{matrix} \oe(1) \\ 
\oplus \oe(-1) \end{matrix}
\pil \begin{matrix} \oe(-1) \\
\oplus \oe(-2)^4 \end{matrix} 
\pil \begin{matrix} \oe(-2)^2 \\ 
\oplus \oe(-3)^{10} \end{matrix} \pil .
\end{equation*}

Zipping this with the exterior coalgebra on a three dimensional vector
space $V$, we get the complex of free $S = \Sym(V \te W^*)$-modules
\[ E_\dt: S(-3)^2 \pil S(-2)^6 \pil S(-1)^3. \]
Using Proposition \ref{RegProDim} and Lemma \ref{RegLemReg}c. the 
homology is given as follows.
\begin{itemize}
\item $H_0(E_\dt) = H^0(\pW, \gS(\gO_L(-1)))$ is $10$-dimensional
with minimal degree generator of degree $1$.
\item $H_1(E_\dt) = H^0(\pW, \gS(\gO_{\ptre}(-1)))$ is $12$-dimensional
with minimal degree generators of degree $3$.
\end{itemize}
Zipping the Tate resolutions of $(\gE^\dt)^*[2]$ with the exterior coalgebra
on $V$ we get the dual of $E_\dt$:
\[ E_\dt^\vee: S(-2)^3 \pil S(-1)^6 \pil S^2. \]
The following is its homology.
\begin{itemize}
\item $H_0(E_\dt^\vee) = H^0(\pW, \gS(\gO_X(2)))$ is $9$-dimensional
with minimal degree generators of degree $0$.
\item $H_1(E_\dt^\vee) = H^0(\pW, \gS(\gI_L))$ is $12$-dimensional with 
minimal degree generators of degree $2$.
\end{itemize}

Take a general map $V \te W^* \pil V$, equivariant for the diagonal matrices
in $\GL(V)$ and reduce to squarefree $\Sb$-modules
\[ \Eb_\dt : \Sb(-3)^2 \pil \Sb(-2)^6 \pil \Sb(-1)^3 \]
where 
\begin{itemize}
\item $H_0(\Eb_\dt)$ is $1$-dimensional with minimal degree generator
of degree $1$.
\item $H_1(\Eb_\dt)$ is $3$-dimensional with minimal degree generator 
of degree $3$.
(Note that this degree is $2$ more than the dimension of $H_0(\Eb_\dt)$.)
\end{itemize}
Also
\[ \Eb^\vee_\dt : \Sb(-2)^3 \pil \Sb(-1)^6 \pil \Sb^2 \]
where \begin{itemize}
\item $H_0(\Eb^\vee_\dt)$ is $0$-dimensional with minimal degree generator of 
degree $0$.
\item $H_1(\Eb^\vee_\dt)$ is $3$-dimensional with minimal degree generator of 
degree $2$. (This is again $2$ more than the dimension of 
$H_0(\Eb_\dt)$.)
\end{itemize}

It then follows that
\[ (\AA \circ \DD)(\Eb_\dt): \Sb(-2)^3 \pil \Sb\]
and
\[ (\AA \circ \DD)(\Eb^\vee_\dt): \Sb(-3)^2 \pil \Sb(-1)^3 \pil \Sb, \]
giving that the dual complex
\[ (\AA \circ \DD)^2(\Eb_\dt): \Sb(-3) \pil \Sb(-2)^3 \pil \Sb^2. \] 
Hence $\Eb_\dt, (\AA \circ \DD)(\Eb_\dt)$ and $(\AA \circ \DD)^2(\Eb_\dt)$
is a triplet of pure free squarefree complexes of the same type as 
in (\ref{EksLigADG}).
Looking ahead to Section \ref{ConSec}, Definition \ref{ConDefHomtrip}
and Remark \ref{ConRemDD}, the complex $\gE^\dt$ corresponds to the 
homology triplet
\[ B = \{ 1,2,3 \}, H = \{ 1,3 \}, C = \{ 0,2,3 \}. \]
This gives the degree triplet 
\[ B = \{ 1,2,3 \}, \overline{H} = \{ 0,2\}, C = \{ 0,2,3 \}. \]

\section{Homology triplets and complexes of coherent sheaves}
\label{ConjSec}
First we introduce the elementary notion of three sets of natural 
numbers forming a homology triplet. This is (almost) the same
as the notion of degree triplet in \cite{FlTr}. 
We then give our main conjecture concerning the existence of certain
complexes of coherent sheaves associated to such homology triplets. 
The class of such complexes may be considered an extension of the class of 
vector bundles with supernatural cohomology. We also give an
example and translate the conjecture to its corresponding statement
for Tate resolutions.

\label{ConSec}
\subsection{Sets of integers and their strands}
Fix an interval $[m,M]$. 
Let $X$ be a subset of this interval.
For $d \in [m,M+1]$ with $d-1 \not \in X$ let $i$ be the number of
elements in the interval $[m,d-1]$ which are not in $X$. The 
{\it $i$'th strand} of $X$ is the interval $[d,e-2]$ where
$e$ is the maximal number with  $[d,e-2]$ contained in $X$.

\begin{example}
Let $[m,M] = [2,12]$ and $X = \{2,3,5,9,10,11\}$.
\begin{itemize}
\item The $0$'th strand is $[2,3]$. 
\item The $1$'th strand is $[5,5]$.
\item The $2$'nd and $3$'rd strands are $\emptyset$. 
\item The $4$'th strand is $[9,10,11]$.
\item The $5$'th strand is $\emptyset$.
\end{itemize}
\end{example}

We may note the following.

\begin{itemize}
\item[1.] $e > d$ and $e = d+1$ iff $d \not \in X$. In this case
the $i$'th strand is the empty interval.
\item[2.] The number of integers in $[m,M]$ not in $X$, denoted $s(X)$
is called the {\it strand span}. It is one less than the number of 
of strands of $X$ in $[m,M]$.
\item[3.] There are integers, called the {\it strand starts}  
(except the last one)
\[ m = x_0 < x_1 < \cdots < x_s < x_{s+1} = M+2 \]
where $s = s(X)$,
such that the $i$'th linear strand of $X$ is the interval
$[x_i, x_{i+1}-2]$. 
\end{itemize}

\begin{example}
In the example above there are $6$ linear strands.
The strand starts are
\[ x_0 = 2, \, x _1 = 5, \, x_2 = 7, \, x_3 = 8, \, x_4 = 9, \, x_5 = 13.\]
\end{example}

An element of  $X$ will be called a {\it degree} of $X$, 
and an element 
not in $X$ a {\it nondegree} of $X$.

Let $Y$ be another subset of $[m,M]$.
We say that $(X,Y)$ is  a {\it balanced pair} with respect to 
$[m,M]$ if for each
$m \leq u \leq M$ the following
equivalent conditions hold:
\begin{enumerate}
\item The number of degrees of $X$ in $[m,u]$ is greater than 
the number of nondegrees of $Y$ in $[m,u]$. 
\item The number of degrees of $X$ in $[m,u]$ plus the number
of degrees of $Y$ in $[m,u]$ is greater than the cardinality of $[m,u]$,
which is $u-m+1$. 
\item  The number of degrees of $Y$ in $[m,u]$ is greater than 
the number of nondegrees of $X$ in $[m,u]$. 
\item The number of degrees of $X \cap [m,u]$ is greater or equal to 
the number of strands of $Y \cap [m,u]$ as a subset of $[m,u]$.
\end{enumerate}

Note that (4) is because the number of strands of $Y \cap [m,u]$ in
$[m,u]$ is 
one more than the number of its nondegrees in $[m,u]$. 
Also note that both $X$ and
$Y$ must contain $m$.

\begin{lemma} Let the degrees of $X$ be $m = d_0 < d_1 < \cdots$
and let the strand starts of $Y$ be $m = y_0 < y_1 < \cdots$.
Then $X$ and $Y$ are balanced iff $y_i > d_i$ for $i \geq 1$. 
\end{lemma}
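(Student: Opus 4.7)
The plan is to unpack what the strand starts of $Y$ encode and then apply the definition of balanced in its cleanest form, namely condition (1) of the list preceding the lemma. The essential observation is that the strand starts of $Y$ are in bijection (after a shift) with the nondegrees of $Y$: for $i \geq 1$, the integer $y_i - 1$ is, by definition of strand start, a nondegree of $Y$ in $[m,M]$, and the ordering of the $y_i$ forces $y_i - 1$ to be precisely the $i$-th smallest nondegree of $Y$ in $[m,M]$. This translation will let us reduce the two directions of the equivalence to elementary counting.

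For the forward direction, assume $(X,Y)$ is balanced and fix $i \geq 1$ for which $y_i$ is defined. Choose $u = y_i - 1 \in [m,M]$. Then the nondegrees of $Y$ in $[m,u]$ are exactly $y_1 - 1, y_2 - 1, \ldots, y_i - 1$, so there are $i$ of them. Condition (1) of being balanced gives strictly more than $i$ degrees of $X$ in $[m,u]$, which forces $d_i$ to exist and satisfy $d_i \leq u = y_i - 1$, hence $d_i < y_i$.

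For the reverse direction, assume $y_i > d_i$ whenever both sides are defined, and fix any $u \in [m,M]$. Let $j$ denote the number of nondegrees of $Y$ lying in $[m,u]$. If $j = 0$ then $d_0 = m \leq u$ already exhibits one degree of $X$ in $[m,u]$, giving strictly more than $j$. If $j \geq 1$, the $j$-th nondegree of $Y$ is $y_j - 1 \leq u$, so the hypothesis $d_j < y_j$ yields $d_j \leq y_j - 1 \leq u$, and hence $d_0 < d_1 < \cdots < d_j$ all lie in $[m,u]$. In either case we produce at least $j+1$ degrees of $X$ in $[m,u]$, which is condition (1).

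The argument is entirely combinatorial and the only subtlety is the bookkeeping: one must be careful to identify $y_i - 1$ as the $i$-th nondegree of $Y$ (not the $i$-th strand start, which is the closely related but distinct shifted index) and to confirm that whenever the hypothesis $y_i > d_i$ is invoked the index $d_i$ indeed exists. Both potential pitfalls are resolved by the deliberate choice $u = y_i - 1$ in the forward direction, which simultaneously pins down the count of nondegrees and forces the existence of enough $d_i$ by the balanced condition itself.
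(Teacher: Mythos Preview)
Your proof is correct and follows essentially the same approach as the paper: both identify $y_i-1$ as the $i$-th nondegree of $Y$ and check condition (1) at the critical values $u=y_i-1$. Your version is simply more explicit, spelling out the reverse direction for arbitrary $u$ (including the $j=0$ case), whereas the paper compresses both directions into a single ``iff'' observation at those critical points.
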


\begin{proof} Let $i \geq 1$. 
The number of nondegrees of $Y$ in $[m, y_i - 1]$ is $i$.
Then note that the number of degrees of $X$ in $[m, y_i - 1]$ is
greater than $i$ if an only if $d_i \leq y_i - 1$. 
\end{proof}

\medskip
\begin{definition} \label{ConDefHomtrip} Let $n$
be a natural number. For $0 \leq u \leq n$ let
$\cj{u} = n-u$. 
A triplet $(B,H,C)$ of non-empty subsets of 
$\nat_0$ is a {\it homology triplet of type $n$} if there
are integers
$0 \leq b,h,c \leq n$ such that:
\begin{enumerate}
\item \[ B \sus [h,\cj{c}], \quad H \sus [h, \cj{b}], \quad
C \sus [c, \cj{b}] \]
and both endpoints of each interval are in the 
subset. (Note the slight asymmetry, but see the remark below.)
\item Let $i(B)$ be the number elements of $[h, \cj{c}] \backslash B$,
i.e. the number of {\it internal} nondegrees of $B$.
Recall that the number of elements of $[h, \overline{b}]\backslash H$ (resp.
$[c, \overline{b}]\backslash C$) is $s(H)$ (resp. $s(C)$). Then 
$n = b+h+c+i(B) + s(H) + s(C)$. 
\item Each of the pairs $(B,H)$, $(\cj{B},C)$, 
and $(\cj{H},\cj{C})$ are balanced with respect to $[h,n], [c,n]$ and
$[b,n]$ respectively.
\end{enumerate}

We usually say only that $(B,H,C)$ is a {\it homology triplet} since
the last element of $B$ and the first of $C$ sum to $n$, and so determine
$n$. 

\end{definition}

\begin{remark} \label{ConRemDD}
If $(B,H,C)$ is a homology triplet, then $(\cj{B},C,H)$ is
also a homology triplet, the {\it dual} homology  triplet.
Also $(B, \overline{H}, C)$ is a degree triplet in the sense
of \cite[Definition 2.9]{FlTr}.
\end{remark}

Denote by $e = i(B) + s(H) + s(C)$ the nondegree number of the 
homology triplet.

\medskip
\noindent {\bf Observation.} 
The cardinality of the interval $[h,\cj{c}]$
is $b+e+1$. This follows by the equation in (2). Similarly the cardinality
of $[h,\cj{b}]$ is $c+e+1$ and that of $[c, \cj{b}]$ is $h+e+1$.

\medskip

For $X \sus [0,n]$, if $u$ is the 
maximum of $X$ we define the {\it codimension} of $X$ to be $\cj{u}
= n - u$. Note that the common codimension of $H$ and $C$ is $b$.

\begin{lemma} For any homology triplet $(B,H,C)$ the following
equality holds for the strand spans and codimension:
\[ s(H) + s(C) + b = |B|-1.\]
In particular the number of (Betti) degrees in $B$ minus one, is greater than
or equal to the number of strands of $C$ plus the codimension $b$ of $C$.
This latter inequality is an equality iff $H$ has only one strand or 
equivalently $H$ is the interval $[h,\overline{b}]$. 
\end{lemma}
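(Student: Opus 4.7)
The plan is to obtain the equation $s(H)+s(C)+b = |B|-1$ as an immediate bookkeeping consequence of the Observation following Definition \ref{ConDefHomtrip}, and then read off the ``in particular'' statement by inspection.

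For the equation, the key input is that $B \sus [h,\cj{c}]$ with both endpoints of the interval lying in $B$, and with exactly $i(B)$ elements missing (these are, by definition, the internal nondegrees of $B$). Hence
\begin{equation*}
|B| \;=\; \bigl| [h,\cj{c}] \bigr| - i(B).
\end{equation*}
Substituting the value $|[h,\cj{c}]| = b + e + 1$ supplied by the Observation, and then $e = i(B) + s(H) + s(C)$ from item (2) of the definition of a homology triplet, the two occurrences of $i(B)$ cancel and one is left with $|B| = b + s(H) + s(C) + 1$. Rearranging gives the stated equality $s(H)+s(C)+b = |B|-1$.

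For the ``in particular'' clause I would simply rewrite this identity as $|B|-1 = b + s(C) + s(H)$ and invoke $s(H) \geq 0$. This yields the inequality $|B|-1 \geq b + s(C)$, and equality holds exactly when $s(H)=0$. Since both endpoints $h$ and $\cj{b}$ already lie in $H$ by the definition of a homology triplet, the condition $s(H)=0$ (no nondegrees of $H$ inside $[h,\cj{b}]$) is equivalent to $H = [h,\cj{b}]$, i.e.\ to $H$ consisting of a single strand.

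There is essentially no obstacle here: once the Observation is granted the lemma reduces to a one-line cancellation together with the nonnegativity of the strand span. The only minor point worth noting is that in the statement the phrase ``number of strands of $C$'' must be read consistently with the notation introduced above, namely as the strand span $s(C)$; the equality case $s(H)=0$ then pins this interpretation down unambiguously.
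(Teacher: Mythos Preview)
Your argument is correct and matches the paper's proof essentially line for line: both compute $|B| = |[h,\cj{c}]| - i(B)$ and then substitute condition (2) of the definition (you route this through the Observation, the paper writes $|[h,\cj{c}]| = n-c-h+1$ directly, which is the same thing). Your closing remark about reading ``number of strands of $C$'' as the strand span $s(C)$ is exactly the interpretation needed to make the equality case consistent, and the paper's one-line ``immediate from the equality'' implicitly relies on it as well.
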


\begin{proof}
The equality is because $i(B)$ is the cardinality of 
$[h,\cj{c}]\backslash B$ and so $|B| = n-c-h +1 - i(B)$.
Then use (2) of Definition \ref{ConDefHomtrip} to get:
\begin{align*}
|B| - 1 &= (n-c-h) - i(B) \\
& =  (b + i(B) + s(H) + s(C)) - i(B) \\
& =  b + s(H) + s(C).
\end{align*}
The last statement is immediate from the the
equality and the definition of linear strand.
\end{proof}

\subsection{The main conjecture}

We are now ready to state our conjecture.

\begin{conjecture} \label{ConConCx}
Let $(B,H,C)$ be a homology triplet of type $n$, where the degrees of $B$ are:
\[ h = d_0 < d_1 < d_2 < \cdots < d_t = \cj{c}, \]
and the strand starts of $H$ and $C$ are, respectively:
\begin{equation*}
h = h_0 < h_1 < \cdots, \quad c = c_0 < c_1 < \cdots 
\end{equation*}
There is a complex $\gE^\dt$ of
coherent sheaves on some projective space $\pW$ such that the following holds:
\begin{itemize}
\item[1.] {\bf Betti degrees.} For each integer $t$ 
with $0 \leq t \leq n$ the hypercohomology 
$\HH^j(\pW, \gE^\dt(-t))$ is nonzero iff $(j,t)  = (d_p-p, d_p)$ 
for some $d_p \in B$.
\item[2.] {\bf Homology strands.} The homology module $H^{-p}(\gE^\dt)$
of the complex is nonzero iff the $p$'th strand $[h_p, h_{p+1}-2]$ 
of $H$ is
nonempty. In this case:
\begin{itemize}
\item[a.] The homology is $1$-regular.
\item[b.] Its dimension is $h_{p+1}-2$. 
\item[c.] The smallest $i$ with $H^i(\pW, H^{-p}(\gE^\dt)(-i))$ nonzero
is $i = h_p$. 
\end{itemize}
\item[3.] {\bf Cohomology strands.}
The cohomology module
$H^{-p}(\gE^{*}[|B|-1])$ is nonzero iff the $p$'th strand
$[c_p, c_{p+1}-2]$ of $C$ is nonempty. In this case:
\begin{itemize}
\item[a.] The cohomology is $1$-regular.
\item[b.] Its dimension is $c_{p+1}-2$. 
\item[c.] The smallest $i$ with $H^i(\pW, H^{-p}(\gE^{*}[|B|-1])(-i))$ nonzero
is $i = c_p$. 
\end{itemize}
\end{itemize}
\end{conjecture}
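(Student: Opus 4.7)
The plan is to construct $\gE^\dt$ via its Tate resolution $\TT = \TT(\gE^\dt)$ over the exterior algebra $E = E(W^*)$, using the equivalence of categories recalled in Subsection \ref{ZipSekTate} between $D^b(\cohpW)$ and the homotopy category of Tate resolutions. Conditions (1)--(3) of the conjecture translate directly into conditions on $\TT$: by (\ref{MainLigTp}), condition (1) says that, beyond the eventually-linear tails in both directions, the terms of $\TT$ have extra generators precisely at the cohomological positions coming from the Betti degrees $d_p \in B$. Conditions (2) and (3) describe the linear strands of $\TT$ and of its $\kk$-dual, which by Lemma \ref{MainLemTTrel}(c) is $\TT((\gE^\dt)^\vee)$. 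Thus it suffices to construct a Tate resolution with prescribed linear-strand modules, over $\Sym W$ going one way and $\Sym W^*$ going the other, each of them $1$-regular, of prescribed Krull dimension, and starting at a prescribed twist.

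The first step is the base case where both $H$ and $C$ are single intervals ($s(H)=s(C)=0$), so $\gE^\dt$ is concentrated in a single cohomological degree. The conditions then force $\gE^\dt$ to be (a shift of) a locally Cohen--Macaulay sheaf with supernatural cohomology whose root sequence is read off from $B$, and existence of such sheaves is established in \cite{ES}; the Schur-bundle construction $S_\lambda(\gQ)$ of Section \ref{MainSec} gives an explicit model in characteristic zero, with Segre-type refinements in arbitrary characteristic. The regularity, dimension, and vanishing assertions (2a)--(2c) and (3a)--(3c) then follow automatically from the supernatural property, since every nonzero cohomology value is dictated by the root sequence.

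For the inductive step, I would induct on the nondegree number $e = i(B) + s(H) + s(C)$. Given a triplet of nondegree number $e \geq 1$, remove a single strand from either $H$ or $C$, say the last strand $[h_s, h_{s+1}-2]$ of $H$, producing a simpler triplet $(B', H', C')$ together with a locally Cohen--Macaulay supernatural sheaf $\gH$ of dimension $h_{s+1}-2$ whose root sequence is prescribed by the piece of $B$ peeled off. By induction there is a complex $\gE^{\prime\dt}$ realizing $(B', H', C')$, and I would build $\gE^\dt$ as a cone
\[
\gE^\dt = \cone\bigl(\gE^{\prime\dt}[-1] \mto{\phi} \gH[s]\bigr)
\]
for a suitable morphism $\phi$ in $D^b(\cohpW)$. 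The long exact sequences in hypercohomology, together with the supernatural cohomology of $\gH$, separate the Betti degrees of $\gE^\dt$ as the disjoint union of those of $\gE^{\prime\dt}$ and of $\gH$, and isolate $\gH$ as the top homology sheaf of $\gE^\dt$.

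The main obstacle is the simultaneous control of the homology sheaves $H^{-p}(\gE^\dt)$ and of the cohomology sheaves $H^{-p}(\gE^*[|B|-1])$: by Lemma \ref{MainLemTTrel}(c) and Serre duality these are linked term-by-term in hypercohomology, but the sheaf-level decomposition of $(\gE^\dt)^\vee$ depends delicately on $\phi$. In Tate-resolution language, $\TT(\gE^\dt)$ must have prescribed linear strands in both directions, and the two families of strands are glued by a single underlying complex of $E$-modules; forcing both families to have the desired regularity and dimension pins down the cocycle class of $\phi$ in an $\Ext$-group whose size one must control. I expect that the requisite freedom comes from choosing the root sequences of the intermediate supernatural sheaves: the induction should carry these root sequences as additional free parameters, and the obstruction to extending should vanish because the remaining balanced-pair conditions of Definition \ref{ConDefHomtrip}(3) leave enough room to place a generic $\phi$ realizing the desired decomposition.
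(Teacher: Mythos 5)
This statement is a \emph{conjecture} in the paper (\ref{ConConCx}); the author does not prove it, and indeed the whole point of the surrounding sections is to support it with examples (Sections \ref{Eks2Sec} and \ref{ConSubsecEks}), to show that it implies the earlier Conjecture 2.11 of \cite{FlTr} (Theorem \ref{SqfTheConCon}) and Conjecture \ref{ConConEnt} (Corollary \ref{SqfreeCorCoTilCo}), and to reformulate it purely in terms of Tate resolutions (Propositions \ref{ConProTateB}, \ref{ConProTateH} and the analogous cohomological statement). So there is no paper proof to compare against, and what you have written is, at best, a plan of attack on an open problem.

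Your plan does align with the spirit of the paper: translating everything into Tate-resolution language is exactly what the paper does in Subsection 6.5, and your base case (both $H$ and $C$ intervals, realized by supernatural locally Cohen--Macaulay sheaves) is the case the paper already knows, which also matches what \cite{FlTr} established using tensor complexes when two of the three degree sequences are intervals. The genuine gap is in your inductive step. First, it is not established that deleting the last strand of $H$ produces a \emph{homology triplet} at all: the strand span $s(H)$ enters the defining identity $n = b+h+c+i(B)+s(H)+s(C)$, so $n$ must drop, the ambient interval $[h,\cj c]$ shrinks, $B$ must lose a degree somewhere, and you must re-verify all three balanced-pair conditions of Definition \ref{ConDefHomtrip}(3) for $(B',H',C')$. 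None of that is checked, and it is not obviously automatic. Second, and more fundamentally, you never prove existence of the morphism $\phi$ in the cone $\gE^\dt = \cone(\gE^{\prime\dt}[-1]\to\gH[s])$. The cone must simultaneously have the right Betti degrees, the right homology strands, \emph{and} the right cohomology strands of its Serre dual, and these last constraints live in the Tate resolution of $(\gE^\dt)^\vee$, which you correctly observe depends delicately on $\phi$; saying ``I expect that the requisite freedom comes from choosing the root sequences'' and ``the obstruction to extending should vanish'' is exactly the content of the conjecture, not a proof of it. Unless you can control that $\Ext$-class explicitly (e.g.\ by exhibiting $\phi$ as coming from a concrete geometric incidence as in the paper's worked Examples \ref{EksSubsecFirst} and \ref{ConSubsecEks}), the inductive step does not go through.
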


\begin{remark}
The above definition generalizes the notion of a vector
bundle with supernatural cohomology in the following sense:
When $H$ and $C$ are intervals, 
the conjecture is realized by vector bundles with supernatural
cohomology and root sequence the negatives of $[0, n]\backslash B$. 
\end{remark}

\subsection{Hilbert polynomials}
In this subsection we calculate the Hilbert polynomial of the complex
$\gE^\dt$ of Conjecture \ref{ConConCx}. We will show that its coefficients
fulfill a number of equations which is one less than the number of 
coefficients. Hence we expect it to be uniquely determined up to scalar
multiple. We also show that the Hilbert polynomials of the homology
sheaves $H^{-p}(\gE^\dt)$
are uniquely determined from that of $\gE^\dt$. 

\medskip
We seek a convenient basis for the polynomials of degree $\leq n$.
For $i = 0, \ldots, n$ let
\[ P_{n,i}(d) = (-1)^i \binom{-d}{i} \binom{d+n}{n-i} =
\binom{d+i-1}{i} \binom{d+n}{n-i}. \]
These form a basis for the vector space of such polynomials
since 
\[ P_{n,i}(d) = \begin{cases} 0 & d \in [-n, 0] \backslash \{-i \} \\
                             (-1)^i & d = -i
                \end{cases}. \]
If $P$ is a polynomial of degree $\leq n$ we may then write
\begin{equation} \label{ConLigP}
 P(d) = \sum_{i = 0}^n \alpha_i P_{n,i}(d).
\end{equation}

\begin{lemma} \label{ConLemGrad} $P(d)$ is a polynomial of degree $\leq n-b$ iff the coefficients
$\alpha_i$ fulfill the equations
\[ \sum_{i = 0}^{n-j}  \alpha_i \binom{n-j}{i}  = 0, \quad j = 0, \ldots, b-1. \]
Alternatively iff they fulfill 
\[ \sum_{i = j}^{n} \alpha_i \binom{n-j}{i-j} = 0, \quad j = 0, \ldots, b-1. \]
\end{lemma}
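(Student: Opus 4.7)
The plan is to reduce both sets of equations on $(\alpha_0,\ldots,\alpha_n)$ to finite-difference conditions on the polynomial $P$, and then invoke the standard fact that high-order finite differences of a polynomial of bounded degree vanish identically.

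First I observe directly from the definition $P_{n,i}(d) = (-1)^i\binom{-d}{i}\binom{d+n}{n-i}$ that $P_{n,i}(-k) = 0$ for every $k\in\{0,1,\ldots,n\}\setminus\{i\}$ and $P_{n,i}(-i) = (-1)^i$. Consequently $P(-k) = (-1)^k\alpha_k$ for $k = 0, 1, \ldots, n$, so the coefficients $\alpha_i$ are, up to sign, just the values of $P$ at the nonpositive integers in $[-n,0]$.

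Substituting $\alpha_i = (-1)^iP(-i)$ into each of the two sums in the statement and using the definition $\Delta^r P(x) = \sum_{i=0}^r (-1)^{r-i}\binom{r}{i} P(x+i)$ of the finite-difference operator, a routine index substitution identifies the two families as
\[ \sum_{i=0}^{n-j}\alpha_i\binom{n-j}{i} \,=\, \Delta^{n-j} P\bigl(-(n-j)\bigr), \qquad \sum_{i=j}^{n}\alpha_i\binom{n-j}{i-j} \,=\, (-1)^j\,\Delta^{n-j} P(-n). \]
Since $\Delta^{n-j} P$ is a polynomial of degree $\le n-(n-j)=j$ in $d$, the assumption $\deg P \le n-b$ forces $\Delta^{n-j}P$ to have degree $\le j-b$, which is negative for every $j\in\{0,1,\ldots,b-1\}$. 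Thus $\Delta^{n-j}P$ is the zero polynomial for all such $j$, which shows that both families of $b$ equations are necessary.

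For the converse I use a dimension count. Each family defines $b$ linear functionals on the $(n+1)$-dimensional coefficient space $\kk^{n+1}$. In the first family the functional $A_j$ involves $\alpha_{n-j}$ with coefficient $1$ and no $\alpha_i$ with $i > n-j$, while in the second family $B_j$ involves $\alpha_j$ with coefficient $1$ and no $\alpha_i$ with $i < j$; in both cases the corresponding $b$ functionals are linearly independent, as each family has an explicit triangular $b\times b$ submatrix with unit diagonal. Hence each system cuts out an $(n-b+1)$-dimensional subspace of $\kk^{n+1}$, which contains the $(n-b+1)$-dimensional subspace of coefficient vectors of polynomials of degree $\le n-b$ shown above to lie in it, forcing equality. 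The only real computation required is the index substitution identifying the two sums with finite differences of $P$; everything else is a degree and dimension count, so no genuine obstacle is expected.
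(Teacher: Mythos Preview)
Your proof is correct and takes a genuinely different route from the paper's. The paper proceeds by induction on $b$: for $b=1$ it reads off the leading coefficient of $P$ from $P_{n,i}(d)=\tfrac{1}{n!}\binom{n}{i}d^n+\cdots$, and for the inductive step it uses the identity $P_{n-1,i}=P_{n,i}-\binom{n}{i}P_{n,n}$ to pass from the basis $\{P_{n,i}\}_{i\le n}$ to $\{P_{n-1,i}\}_{i\le n-1}$, showing that the new coefficients $\beta_i$ coincide with the old $\alpha_i$ for $i<n$ and then invoking the inductive hypothesis. This argument is intrinsic to the $P_{n,i}$ basis but, as written, only handles the first family of equations; the ``alternatively'' family is presumably meant to follow from the symmetry $P_{n,i}(-n-d)=(-1)^nP_{n,n-i}(d)$ noted later in the paper.

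Your approach instead exploits the interpolation property $\alpha_i=(-1)^iP(-i)$ to recognise both sums as values of the iterated forward difference $\Delta^{n-j}P$ at two different points, so that the vanishing for $j<b$ follows immediately from $\deg P\le n-b$, and then closes the converse with a clean dimension count using the triangular shape of each family. The advantage of your argument is that it treats both families on an equal footing and yields the equivalence of the two sets of equations without any extra work; the paper's induction, by contrast, ties the lemma more directly to the recursive structure of the $P_{n,i}$ and would be the natural choice if one were developing further identities in that basis.
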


\begin{proof}
We use induction on $b$. Let $b = 1$. We have 
\[ P_{n,i}(d) = \frac{1}{n!} \binom{n}{i} d^n + \text{lower terms in }d.\]
Thus $P$ is of degree $\leq n-1$ iff $\sum \alpha_i \binom{n}{i} = 0$. 

Suppose $b > 0$. We verify easily that for $i = 0, \ldots, n-1$
\[ P_{n-1,i} = P_{n,i} - \binom{n}{i}P_{n,n}. \]
Let $P(d) = \sum_{i = 0}^{n-1} \beta_i P_{n-1,i}(d)$.
Then 
\[ P = \sum_{i = 0}^{n-1} \beta_i P_{n,i} - 
(\sum_{i = 0}^{n-1} \beta_i \binom{n}{i} P_{n,n}), \]
so $\alpha_i = \beta_i$ for $i = 0, \ldots, n-1$.
By induction $P$ is of degree $\leq n-1 - (b-1)$ iff
\[ \sum_{i = 0}^{n-1-j} \beta_i \binom{n-1-j}{i}, \quad j = 0, \ldots, b-2. \]
This translates to the conditions in the lemma.
\end{proof}

\medskip
  For a coherent sheaf $\gE$ on $\pW$, its Hilbert polynomial is
\[P(\gE,d) = \sum_{i \geq 0} (-1)^i \dim_\kk H^i(\pW, \gE(d)) \]
and this is $\dim_\kk H^0(\pW, \gE(d))$ when $d \gg 0$. 
Its degree equals the dimension of the support of $\gE$. 
For a bounded complex of coherent sheaves $\gE^\dt$ we define
\[ P(\gE^{\dt}, d) = \sum_{p \in \hele} (-1)^p P(\gE^p,d). \]
This is the same as the alternating sum of the hypercohomology 
groups
\[ \sum_{i \in \ZZ} (-1)^i \dim_\kk \HH^i(\pW, \gE(d)). \]
Note that the degree of $P(\gE^\dt, d)$ is the maximum of the dimensions
of the homology sheaves, provided there is only one such attaining this maximum.

By Serre duality $P((\gE^\dt)^\vee, d) = P(\gE, -d)$.
Since $(\gE^\dt)^* = (\gE^\dt)^\vee(n)[n]$ we have
\[ P((\gE^\dt)^*[|B|-1], d) = (-1)^{|B| -1-n} P(\gE^\dt, -n-d). \]
Note that $P_{n,i}(-n-d) = (-1)^n P_{n,n-i}(d)$. Hence
if $P(\gE^\dt, d)$ is given by (\ref{ConLigP}) then
$P((\gE^\dt)^*[|B|-1],d)$ is given by 
\begin{equation} \label{ConLigPstar}
(-1)^{|B|-1} \sum_{i = 0}^n \alpha_{n-i} P_{n,i}(d).
\end{equation}

Let $\chi_p(d)$ be the Hilbert polynomial of $H^{-p}(\gE^\dt)$.
Then $P(\gE^\dt, d) = \sum_{p=0}^{s(H)} (-1)^p \chi_p(d)$.

\begin{proposition} \label{ConProHPol}
Let $\gE^\dt$ be a complex corresponding to the
homology triplet $(B,H,C)$. 

a. The Hilbert polynomial $P(\gE^\dt, d)$ has degree $n-b$. We may write
it as 
\begin{equation} \label{ConLigPalfa}
\sum_{i=0}^n \alpha_i P_{n,i}(d) = \sum_{i \in B} \alpha_i P_{n,i}(d),
\end{equation}
where $\alpha_i = 0$ for $i \not \in B$. Its coefficients fulfill
the following equations:
\begin{equation} \label{ConLigH} 
\alpha_0 \binom{r}{0} + \alpha_1 \binom{r}{1} + \cdots 
+ \alpha_{r}\binom{r}{r} = 0 
\end{equation}
for each $r$ in $[h,n]$ not contained in $H$, and 
\begin{equation} \label{ConLigC} 
\alpha_{n-r} \binom{r}{r} + \cdots + \alpha_{n-1}\binom{r}{1} + 
\alpha_n \binom{r}{0} = 0
\end{equation}
for each $r$ in $[c,n]$ not contained in $C$.
In the range $r \in \langle \cj{b}, n]$ (the interval of integers excluding
$\cj{b} = n-b$) 
the set of equations (\ref{ConLigH}) and
(\ref{ConLigC}) are equivalent by Lemma \ref{ConLemGrad}. After this 
reduction we have 
$s(H) + s(C) + b = |B| - 1$ equations in the unknowns $\alpha_i, i \in B$. 

b. The polynomials $\chi_p(d)$ are determined by $P$. More precisely
\[ \sum_{i = 1}^p (-1)^i \chi_{i-1}(d) = \sum_{i \in B \cap [0, h_p -2]}
\alpha_i P_{h_p-2,i}(d) \] for $p = 1, \ldots, s(H) +1$.
\end{proposition}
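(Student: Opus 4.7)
The plan is to expand $P(\gE^\dt,d)$ in the basis $\{P_{n,i}\}$ and extract both the vanishing of coefficients and the relations among them from the three conditions of Conjecture \ref{ConConCx}. The crucial evaluation is $P_{n,i}(-t) = (-1)^i \delta_{it}$ for $t \in [0,n]$, which gives
\[ P(\gE^\dt, -t) \;=\; (-1)^t \alpha_t \quad \text{for every } t \in [0,n]. \]
From condition (1), the hypercohomology $\HH^j(\pW,\gE^\dt(-t))$ vanishes for all $j$ whenever $t \in [0,n]\setminus B$; taking the alternating sum forces $\alpha_t = 0$ for such $t$ and establishes (\ref{ConLigPalfa}). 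Using the decomposition $P = \sum_{p=0}^{s(H)}(-1)^p\chi_p$, with $\chi_p$ the Hilbert polynomial of $\mathcal{F}_p := H^{-p}(\gE^\dt)$, condition (2b) gives $\deg \chi_p = h_{p+1}-2$; since these degrees are strictly increasing in $p$, no leading-term cancellation occurs and $\deg P = h_{s(H)+1}-2 = n-b$. Lemma \ref{ConLemGrad} then yields the $b$ equations (\ref{ConLigH}), equivalently (\ref{ConLigC}), for $r \in (\bar b,n]$.

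The remaining equations (\ref{ConLigH}) at an internal $H$-nondegree $r = h_{q+1}-1$ are what I would derive from the identity
\[ \sum_{i=0}^r \alpha_i \binom{r}{i} \;=\; \sum_{k=0}^r (-1)^k \binom{r}{k} P(-k) \;=\; \sum_{p=0}^{s(H)} (-1)^p\!\sum_{k=0}^r (-1)^k \binom{r}{k}\chi_p(-k), \]
obtained by inserting $P = \sum \alpha_i P_{n,i}$, split at $p=q$. For $p\leq q$, $\deg \chi_p \leq h_{q+1}-2 < r$, so the inner sum is an $r$-th finite difference of a polynomial of degree $<r$ and vanishes. For $p > q$, $h_p \geq h_{q+1} > r$, and I claim $\chi_p(-k)=0$ for every $k \in [0,r]$. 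This is the main obstacle: $1$-regularity (2a) gives $H^j(\mathcal{F}_p(-k)) = 0$ for $j > k$, and condition (2c) kills the diagonal term $H^k(\mathcal{F}_p(-k))$ when $k<h_p$, but the off-diagonal entries $H^j(\mathcal{F}_p(-k))$ with $j < k < h_p$ must also cancel in the Euler characteristic. I expect to deduce this by combining the full strength of conditions (2) and (3) with the hypercohomology constraint (1), which forces the linear strands of the Tate resolution of each $\mathcal{F}_p$ to vanish in the cohomological positions not supported by the $H$-strand structure, yielding a supernatural-like cohomology pattern on $\mathcal{F}_p$ below the $h_p$-diagonal. The equations (\ref{ConLigC}) then follow by the symmetric argument applied to $(\gE^\dt)^\ast[|B|-1]$: by Lemma \ref{MainLemTTrel} and (\ref{ConLigPstar}) its Hilbert polynomial has coefficients $(-1)^{|B|-1}\alpha_{n-i}$ in the $P_{n,i}$ basis, and condition (3) for $\gE^\dt$ becomes the homology-strand condition (2) for the dual; thus the dual instance of (\ref{ConLigH}) translates verbatim into (\ref{ConLigC}) for $\gE^\dt$. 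The final count $s(H)+s(C)+b = |B|-1$ is given by the lemma preceding the proposition.

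For part (b), both sides are polynomials of degree at most $h_p-2$. Setting $P^{[p]}(d) := \sum_{i \in B \cap [0,h_p-2]}\alpha_i P_{h_p-2,i}(d)$, the evaluation $P_{h_p-2,i}(-t) = (-1)^i\delta_{it}$ for $t \in [0,h_p-2]$ together with $\alpha_t = 0$ for $t \notin B$ gives $P^{[p]}(-t) = P(-t)$ throughout $[0,h_p-2]$. Writing $\sigma_p(d) := \sum_{i=1}^p(-1)^i\chi_{i-1}(d)$, a direct telescoping identifies $\sigma_p$ with $\pm P$ modulo the tail $\sum_{j \geq p}(-1)^j\chi_j$, so the desired identity reduces to showing $\chi_j(-t) = 0$ for $t \in [0,h_p-2]$ and $j \geq p$. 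Since $j \geq p$ forces $h_j \geq h_p > t$, this is exactly the claim from part (a) applied at index $j$. Agreement at the $h_p-1$ integer points $t = 0,1,\ldots,h_p-2$ then pins down two polynomials of degree at most $h_p-2$, completing part (b).
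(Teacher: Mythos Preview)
Your approach is the paper's approach, and your identification of ``the main obstacle'' is exactly right: the whole argument hinges on $\chi_j(-k)=0$ for $0\le k<h_j$. Where you go astray is in the proposed fix. You speculate that one must invoke the global hypercohomology condition (1) and the dual condition (3) to force a supernatural-type pattern on $\mathcal F_p$. That is not necessary; properties (2a) and (2c) for the single sheaf $\mathcal F_p$ already give the much stronger conclusion that \emph{every} cohomology group $H^j(\mathcal F_p(-k))$ vanishes for $0\le k<h_p$, not merely the alternating sum.

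The clean way to see this is through the Tate resolution $\TT=\TT(\mathcal F_p)$. By $1$-regularity (2a), $\TT^q=\oe(-q)\otimes H^0(\mathcal F_p(q))$ for $q\ge1$, so only the twist $-q\le-1$ occurs there. By (2c), $\TT^0=\bigoplus_{j\ge h_p}\oe(j)\otimes H^j(\mathcal F_p(-j))$ has only twists $\ge h_p$. Now $\TT^{\le0}$ is a \emph{minimal} projective $E$-resolution of $\ker d^1$. Each $\oe(j)$ is generated in degree $m+1-j$, so $\TT^0$ is generated in degrees $\le m+1-h_p$; minimality forces the first syzygy to lie in $E_{<0}\cdot\TT^0$, hence in degrees $\le m-h_p$, so $\TT^{-1}$ has only summands $\oe(j')$ with $m+1-j'\le m-h_p$, i.e.\ $j'\ge h_p+1$. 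Iterating, $\TT^{-q}$ has only twists $\ge h_p+q$ for $q\ge0$. Thus no $\oe(i)$ with $0\le i<h_p$ appears anywhere in $\TT$, which says precisely that $H^j(\mathcal F_p(-i))=0$ for all $j$ and $0\le i<h_p$. This is the content of the paper's one-line assertion ``no term of the Tate resolution of this sheaf can involve the modules $\oe(i)$ when $0\le i<h_p$.''

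With this lemma in hand, both halves of your argument go through exactly as written: for part (a) the $p>q$ terms drop out and the $p\le q$ terms vanish by the finite-difference observation; for part (b) you match two degree-$\le h_p-2$ polynomials at $h_p-1$ points.
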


We expect the linear equations in a. to be independent. Hence
there is a unique Hilbert polynomial up to constant, and
so unique $\chi_p(d)$ up to common constant.

\begin{conjecture} \label{ConConEnt} Let $\gE^\dt$ be a complex of coherent
sheaves  corresponding to the
homology triplet $(B,H,C)$ as in Conjecture \ref{ConConCx}.
Then its Hilbert polynomial is uniquely determined up to constant.
\end{conjecture}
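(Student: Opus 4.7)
The plan is to reduce Conjecture \ref{ConConEnt} to a linear-algebra statement and then establish it by exhibiting staircase structure coming from the balanced conditions. By Proposition \ref{ConProHPol}a, after writing $P(d) = \sum_{i \in B} \alpha_i P_{n,i}(d)$ with $\alpha_i = 0$ for $i \notin B$, the coefficients $(\alpha_i)_{i \in B}$ satisfy $|B|-1$ linear equations; the conjecture is equivalent to asserting that these equations have rank exactly $|B|-1$, so the solution space is one-dimensional.

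First I would recast the equations via two generating polynomials. Set $Q(x) = \sum_{i \in B}\alpha_i \binom{x}{i}$, a polynomial of degree at most $\overline{c}$ in $x$. Then equations (\ref{ConLigH}) are equivalent to $Q(r) = 0$ for each $r \in [h,n] \setminus H$, giving $s(H) + b$ prescribed zeros of $Q$. Dually, define $R(y) = \sum_{i \in B}\alpha_i \binom{y}{n-i}$; equations (\ref{ConLigC}) say $R(r) = 0$ for each $r \in [c,n] \setminus C$. By Lemma \ref{ConLemGrad}, the $b$ equations indexed by $r \in [\overline{b}+1, n]$ in the two families coincide (both express $\deg P \le n-b$), leaving $s(H) + s(C) + b = |B|-1$ distinct linear conditions. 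To show these are independent, I use the balanced conditions from Definition \ref{ConDefHomtrip}(3): by the lemma preceding that definition, the strand starts $h_0 < h_1 < \cdots$ of $H$ and the degrees $d_0 < d_1 < \cdots$ of $B$ satisfy $h_i > d_i$, so the internal $H$-nondegrees $r^H_i := h_i - 1$ obey the staircase $r^H_i \ge d_i$; a symmetric staircase holds on the $C$-side for the reversed degree sequence of $\overline{B}$.

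From this staircase I would extract a nonvanishing $(|B|-1) \times (|B|-1)$ minor of the coefficient matrix by a generalized Vandermonde argument: $\det\bigl(\binom{r_a}{i_b}\bigr)$ is positive whenever $r_1 < \cdots < r_k$ and $i_1 < \cdots < i_k$ satisfy $r_a \ge i_a$. The main obstacle is that the combined $(|B|-1) \times |B|$ matrix mixes $H$-side rows with entries $\binom{r}{i}$ and $C$-side rows with entries $\binom{r}{n-i}$, together with $b$ middle rows common to both; no single reordering makes it triangular, and no single Vandermonde identity applies. I expect the argument to proceed by arranging rows and columns so the matrix becomes block-triangular---an $H$-block for the low-index degrees of $B$, a $C$-block for the high-index ones, coupled by the middle block in a controlled way---and then evaluating each diagonal block via the generalized Vandermonde formula, with the staircase inequalities guaranteeing nonvanishing. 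A cleaner alternative, if available, would identify the determinant with a Schur function via a Jacobi-Trudi type identity, making nonvanishing manifest. Once the rank of the system is $|B|-1$, the Hilbert polynomial $P$ is unique up to scalar; Proposition \ref{ConProHPol}b then determines each $\chi_p$ up to the same common scalar.
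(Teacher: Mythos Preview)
Your approach is genuinely different from the paper's. The paper does \emph{not} attempt a direct linear-algebra proof of Conjecture \ref{ConConEnt}; instead, in Corollary \ref{SqfreeCorCoTilCo} it deduces this conjecture from Conjecture \ref{ConConCx} assumed to hold for \emph{all} homology triplets. The mechanism is indirect: given such complexes $\gE^\dt$, Theorem \ref{SqfTheConCon} produces triplets of pure free squarefree complexes for every degree triplet $(B,\overline{H},C)$; then Theorem~3.9 of \cite{FlTr} forces the Betti numbers $\beta_i$ of such a pure triplet to be unique up to a common scalar, and since $\beta_i = \binom{n}{d_i}\,(-1)^{d_i-i}P(-d_i)$, the values $P(-d_i)$ (together with the vanishing at the remaining integers in $[-n,0]$) determine $P$ up to scalar. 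So the paper trades a determinant computation for a conditional argument resting on the existence conjecture and on prior uniqueness results for squarefree triplets.

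Your route, by contrast, aims at an \emph{unconditional} proof, which would be strictly stronger. The reformulation via $Q(x)=\sum_{i\in B}\alpha_i\binom{x}{i}$ and $R(y)=\sum_{i\in B}\alpha_i\binom{y}{n-i}$ is correct, and the staircase inequalities $h_i>d_i$ (and their $C$-side counterparts) do follow from the balanced conditions exactly as you say. However, the heart of the argument---that the combined $(|B|-1)\times|B|$ system has full rank---is not actually established. You correctly identify the difficulty: the matrix mixes rows of the form $\binom{r}{i}$ and $\binom{r}{n-i}$, and neither a single generalized Vandermonde identity nor an obvious block-triangularization applies. The phrases ``I expect the argument to proceed'' and ``a cleaner alternative, if available'' mark exactly where the proof stops. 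Producing the block decomposition you describe, or finding a Jacobi--Trudi identification, is the entire content of the problem; until one of these is carried out, this remains a plausible strategy rather than a proof. In short: the paper proves a weaker (conditional) statement by an entirely different method, while your stronger claim has a real gap at the rank computation.
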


In Proposition \ref{SqfreeCorCoTilCo} we show that 
Conjecture \ref{ConConCx} implies the above conjecture.


\begin{proof}[Proof of Proposition \ref{ConProHPol}.]
By Property 1. of Conjecture \ref{ConConCx}, $P(d) = 0$ for 
$-d \in [0,n]\backslash B$. This shows (\ref{ConLigPalfa}).

The last nonempty strand in $H$ is the $s(H)$'st. The homology sheaf
$H^{-s(H)}(\gE^\dt)$ is then the one with largest dimension,
$h_{s(H) + 1} - 2 = \cj{b} = n-b$. So this is the degree
of $P(d) = P(\gE^\dt, d)$.

Consider the sheaf $H^{-p}(\gE^\dt)$. By Properties 2.a. and 2.c.
of Conjecture \ref{ConConCx}, no term of the Tate resolution
of this sheaf can 
involve the modules $\oe(i)$ when $0 \leq i < h_p$. Therefore
$\chi_p(k) = 0$ when $k = 0, -1, \ldots, -(h_p-1)$. In fact, since
the $h_p$ are increasing this
implies $\chi_i(k) = 0$ for all $i > p$ when $k$ is in this range. 
Therefore for $k = 0, -1, \ldots, -(h_p -1)$ we have 
$P(k) = \sum_{i = 1}^p (-1)^{i-1}\chi_{i-1}(k)$. 
For $p = 0$ this condition is already taken care of by equation
(\ref{ConLigPalfa}), so we may assume $p \geq 1$. 
But $Q = \sum_{i = 1}^p (-1)^{i-1} \chi_{i-1}(d)$
is a polynomial of degree $\leq h_p -2$ (with equality if the $p-1$'th
strand is nonempty). For such a polynomial the 
$(h_p -1)$'st difference is zero and so
\[ Q(0) - \binom{h_p - 1}{1} Q(-1) + \binom{h_p -1}{2} Q(-2) +
\cdots + (-1)^{p-1}\binom{h_p -1}{h_p -1} Q(-(h_p-1)) = 0. \]
This gives the same relation if we in the above equation replace $Q$
with $P$. Since $P(-i) = (-1)^i\alpha_i$ we get
\begin{equation*}
\alpha_0 \binom{h_p -1}{0} + \alpha_1 \binom{h_p - 1}{1} + \cdots
 = 0, \quad p = 1, \ldots, s(H) +1. 
\end{equation*}
Since the $h_p -1 $ for $p = 1, \ldots, s(H)$ are exactly the nondegrees
of $H$ in $[h,\cj{b}]$, we get (\ref{ConLigH}) for $r \in [h, \cj{b}]$. 
Note that $h_{s(H) + 1} - 1 = \cj{b}+1$. 
For $r \in \langle \cj{b}, n]$ we get (\ref{ConLigH}) 
by Lemma \ref{ConLemGrad}.

Similarly $P^*(d) = P(\gE^*[|B|-1], d)$ which is $(-1)^{|B| -1-n}P(-n-d)$
will fulfill the relations
\[ P^*(0) - \binom{c_p-1}{1}P^*(-1) + \binom{c_p-1}{2} P^*(-2) + 
\cdots + (-1)^{c_p-1} \binom{c_p - 1}{c_p -1} P^*(-(c_p -1)) \]
for $p = 1, \ldots, s(C)$.  The $c_p -1$ in this range are
precisely the nondegrees of $C$ in $[c, \cj{b}]$. 
Since $P^*(-i) = (-1)^{|B|-1 + n-i}\alpha_{n-i}$ we get the equations
(\ref{ConLigC}) for $r \in [c, \cj{b}]$. 
For $r \in \langle \cj{b}, n]$ we get (\ref{ConLigC}) 
by Lemma \ref{ConLemGrad}.
The equivalence of (\ref{ConLigH}) and (\ref{ConLigC}) when 
$r \in \langle \cj{b}, n ]$ also follows by Lemma \ref{ConLemGrad}.

\medskip
\noindent b. Since $\sum_{i = 1}^p (-1)^{i-1} \chi_{i-1}(d)$ 
is a polynomial of degree $\leq h_p -2$,
we may write is as $\sum_{i = 0}^{h_p - 2} \beta_i P_{h_p-2,i}(d) $.
We have $P(k) = \sum_{i = 1}^p (-1)^{i-1}\chi_{i-1}(k)$ for $k = 0, -1, \ldots,
-(h_p-1)$. Since the value of the right side of this
equation is $\beta_k$ and the
value of the left side is $\alpha_k$ when $k \in B$, and zero otherwise,
we get part b.
\end{proof}

\begin{remark}
In a similar way all the Hilbert polynomials of the homology
sheaves $H^{-p}((\gE^\dt)^*[|B|-1])$ are determined by the Hilbert
polynomial of $\gE^\dt$. 
\end{remark}

\begin{corollary} Let $\gE^\dt$ be a complex of coherent
sheaves  corresponding to the
homology triplet $(B,H,C)$ as in Conjecture \ref{ConConCx}.
The hypercohomology table of $\gE^\dt$ is determined by its Hilbert
polynomial.
\end{corollary}

\begin{proof}
Let $P$ be the Hilbert polynomial of $\gE^\dt$. When $t$ is in $[-n,0]$
the dimension of $H^p(\pW, \gE^\dt(t))$ is given by $(-1)^p P(t)$
by part 1. of Conjecture \ref{ConConCx}. When $t \geq 1$
\[ \HH^p(\pW, \gE^\dt(t)) = H^0(\pW, H^p(\gE^\dt)(t))\]
by the same argument as in Lemma \ref{RegLemKoh}. 
But this dimension is determined by the Hilbert polynomial
of $H^p(\gE^\dt)$ which is determined by $P$. 

The dimensions of $\HH^p(\pW, \gE^\dt(t))$ when $t \leq -n-1$ 
are similarly, by Serre duality, determined by the Hilbert polynomials
of the homology modules of $(\gE^\dt)^*$.
\end{proof} 

\begin{remark} In the Macaulay2 package  Triplets there
are routines for computing the hypercohomology table associated
to a homology triplet.
\end{remark}

\subsection{A third example} \label{ConSubsecEks}
We shall construct a complex fulfilling the conjecture
for the homology triplet of type $n = 4$ with 
\begin{equation} \label{ConLigBHC}
B = \{ 0,1,2 \}, \quad H = \{ 0,2,4 \}, \quad C = \{ 2,3,4 \}. 
\end{equation}

1. Let $X$ be three general points in $\pto$, so the twisted 
ideal sheaf $\gI_X(2) \sus \gO_{\pto}(2)$
has resolution:
\[ \gE^\dt: \gO_{\pto}(-1)^2 \pil \gO_{\pto}^3. \]
The cohomology diagram of $\gE^\dt$ is:
\begin{center}
\begin{tabular}{c c c c c c c c c c| c}
$\cdots$ & 6 & 3 & 1 & $\cdot $& $\cdot $& $\cdot $& $\cdot $& $\cdot $& \
$\cdots$ & 2\\ 
$\cdots$ & 3 & 3 & 3 & 3 & 2 & $\cdot $& $\cdot $& $\cdot $&   $\cdots$ & 1\\
$\cdots$ & $\cdot $& $\cdot $& $\cdot $& $\cdot $& $\cdot $& 3 & 7 & 12 & 
$\cdots$ & 0\\
\hline
$\cdots$ &  -5 & -4 & -3 & -2 & -1 & 0 & 1 & 2 &  $\cdots$ &
$d \backslash i$
\end{tabular}.
\end{center}
The dual complex is
\[ (\gE^{\dt})^*[2]: \gO_{\pto}(1)^3 \pil \gO_{\pto}(2)^2 \]
with cohomology  $\omega_X(2) \iso \gO_X(2)$ in cohomological degree $0$
and $\gO_{\pto}(-1)$ in cohomological degree $-1$. These are $1$-regular
coherent sheaves.
Its hypercohomology table is: 
\begin{center}
\begin{tabular}{c c c c c c c c c c c| c}
$\cdots$  & 12 & 7 & 3 & $\cdot $& $\cdot $& $\cdot $& $\cdot $& $\cdot $& $\cdot $&
$\cdots$ & 1\\ 
$\cdots$ & $\cdot $& $\cdot $& $\cdot $& 2 & 3 & 3 & 3 & 3 & 3 &  $\cdots$ & 0\\
$\cdots$ & $\cdot $& $\cdot $& $\cdot $& $\cdot $& $\cdot $& 1 & 3 & 6 & 10 &
$\cdots$ & -1\\
\hline
$\cdots$ &  -5 & -4 & -3 & -2 & -1 & 0 & 1 & 2 & 3 &  $\cdots$ &
$d \backslash i$
\end{tabular}.
\end{center}

\medskip
\noindent 2. Now we embed $\pto$ into $\pfire = \PP(W)$. 
Consider $\gI_X(2)$ as a sheaf 
on $\pfire$ via this embedding. Its resolution (obtained essentially 
be tensoring $\gE^\dt$ with the Koszul complex 
$\gO(-2) \pil \gO(-1)^2 \pil \gO$) is:
\[ \gG^\dt: \gO_{\pfire}(-3)^2 \pil \gO_{\pfire}(-2)^7 
\pil \gO_{\pfire}(-1)^8 \pil \gO_{\pfire}^3. \]
The dual complex  (also obtained 
essentially by tensoring $(\gE^{\dt})^*[2]$ with the Koszul complex 
$\gO(-2) \pil \gO(-1)^2 \pil \gO$) is:
\[ (\gG^{\dt})^*[2]: \gO_{\pfire}(-1)^3 \pil \gO_{\pfire}^8 \pil
\gO_{\pfire}(1)^7 \pil \gO_{\pfire}(2)^2 \]
and has cohomology 
\[ H^0((\gG^{\dt})^*[2]) = \omega_X(2) = \gO_X(2), \quad 
H^{-1}((\gG^{\dt})^*[2]) = \gO_{\pto}(-1). \]
Its hypercohomology table is then
\begin{center}
\begin{tabular}{c c c c c c c c c c c| c}
$\cdots$  & 12 & 7 & 3 & $\cdot $& $\cdot $& $\cdot $& $\cdot $& $\cdot $& $\cdot $&
$\cdots$ & 1\\ 
$\cdots$ & $\cdot $& $\cdot $& $\cdot $& 2 & 3 & 3 & 3 & 3 & 3 &  $\cdots$ & 0\\
$\cdots$ & $\cdot $& $\cdot $& $\cdot $& $\cdot $& $\cdot $& 1 & 3 & 6 & 10 &
$\cdots$ & -1\\
\hline
$\cdots$ &  -5 & -4 & -3 & -2 & -1 & 0 & 1 & 2 & 3 &  $\cdots$ &
$d \backslash i$
\end{tabular}.
\end{center}

\medskip
\noindent 3. 
We now drop the last term $\gO_{\pfire}^3$ of $\gG^\dt$, and shift by
$1$ to get a complex
\[ \gF^\dt: \gO_{\pfire}(-3)^2 \pil \gO_{\pfire}(-2)^7 \pil
\gO_{\pfire}(-1)^8 \]
(with the last term in cohomological position $0$)
which is a resolution of the kernel $\gK$ of $\gO_{\pfire}^3 \pil \gI_X(2)$,
a $1$-regular sheaf. We dualize this to get: 
\[ (\gF^{\dt})^*[2]: \gO_{\pfire}^8 \pil \gO_{\pfire}(1)^7 \pil \gO_{\pfire}(2)^2 \]
whose cohomology is
\[ H^0((\gF^{\dt})^*[2]) = \gO_X(2), \quad H^{-1}((\gF^{\dt})^*[2]) = \gO_{\pto}(-1),
\quad H^{-2}((\gF^{\dt})^*[2]) = \gO_{\pfire}(-1)^3, \]
all of which are $1$-regular.
We see that $(\gF^{\dt})^*[2]$ is a complex on $\pfire$ fulfilling the 
conditions of Conjecture \ref{ConConCx} for the homology triplet 
(\ref{ConLigBHC}).

The hypercohomology table of $(\gF^{\dt})^*[2]$ is:
\begin{center}
\begin{tabular}{c c c c c c c c c c c| c}
$\cdots$ & 87 & 33 & 8 & $\cdot $& $\cdot $& $\cdot $& $\cdot $& $\cdot $& $\cdot $&
$\cdots$ & 2\\ 
$\cdots$ & $\cdot $& $\cdot $& $\cdot $& $\cdot $& $\cdot $& $\cdot $&
$\cdot $& $\cdot $& $\cdot $& $\cdots$ & 1 \\
$\cdots$ & $\cdot $& $\cdot $& $\cdot $& 2 & 3 & 3 & 3 & 3 & 3 &  $\cdots$ & 0 \\
$\cdots$ & $\cdot $& $\cdot $& $\cdot $& $\cdot $& $\cdot $& 1 & 3 & 6 & 10 &
$\cdots$ & -1\\
$\cdots$ & $\cdot $& $\cdot $& $\cdot $& $\cdot $ & 3 & 15 & 45 & 105 & 210 &
$\cdots$ & -2 \\
\hline
$\cdots$ &  -5 & -4 & -3 & -2 & -1 & 0 & 1 & 2 & 3 &  $\cdots$ &
$d \backslash i$
\end{tabular}.
\end{center}
and so the Tate resolution of $(\gF^{\dt})^*[2]$ is:
\begin{equation} \label{ConLigTate} \pil \oe(7)^{87} \pil \oe(6)^{33} \pil 
\oe(5)^8 \pil \oe(2)^2
\pil \begin{matrix} \oe(1)^3 \\  \\ \oplus \oe(-1)^3 \end{matrix}
\pil \begin{matrix} \oe^3 \\ \oplus \oe(-1) \\ \oplus \oe(-2)^{15} 
     \end{matrix}
\pil \begin{matrix} \oe(-1)^3 \\ \oplus \oe(-2)^3 \\ \oplus \oe(-3)^{45}
     \end{matrix} \pil
\end{equation}
We zip this complex with the exterior coalgebra
on  a four-dimensional vector space $V$ and get
\[ \wedge^2 V \te S(-2)^2 \pil V \te S(-1)^3 \pil S^3.\]
Then we reduce to 
a squarefree complex of $\Sb = \Sym(V)$-modules
\[ F^\dt: \Sb(-2)^{12} \pil \Sb(-1)^{12} \pil \Sb^3. \]
This sits in a triplet of pure free squarefree complexes where
\begin{align*}
(\AA \circ \DD)(F^\dt) &: \Sb(-4)^3 \pil \Sb(-2)^{6} \pil \Sb^3 \\
(\AA \circ \DD)^2(F^\dt) &: \Sb(-4)^3 \pil \Sb(-3)^{12} \pil \Sb(-2)^{12}.
\end{align*}

\subsection{The conjecture in terms of Tate resolutions}
Conjecture \ref{ConConCx} may also be stated in terms of Tate resolutions of 
the complex $\gE^\dt$, which may be convenient when trying to construct
such complexes.

Given a Tate resolution $\TT$.
Let $\TT(e,-)$ be the subcomplex consisting of the
terms $\oe(i)$ where $i \leq e$, and let $\TT(-,d)$ be the quotient complex
consisting of the terms $\oe(i)$ where $i \geq d$. Also let
$\TT(e,d)$ be the subquotient complex consisting of the $\oe(i)$ where
$e \geq i \geq d$. 

\begin{example} Consider the Tate resolution $\TT$ in (\ref{ConLigTate}). 
Then 
\[ \TT(-1,-):  \oe(-1)^3 
\pil \begin{matrix} \oplus \oe(-1) \\ \oplus \oe(-2)^{15} 
     \end{matrix}
\pil \begin{matrix} \oe(-1)^3 \\ \oplus \oe(-2)^3 \\ \oplus \oe(-3)^{45}
     \end{matrix} \pil \cdots
\] 
and
\[ \TT(4,0): \oe(2)^2 \pil  \oe(1)^3 \pil \oe^3. \]
\end{example}

Now given a homology triplet
$(B,H,C)$ of type $n$, where the degrees of $B$ are:
\[ h = d_0 < d_1 < d_2 < \cdots < d_t = \cj{c}, \]
and the strand starts of $H$ and $C$ are, respectively:
\begin{equation*}
h = h_0 < h_1 < \cdots, \quad c = c_0 < c_1 < \cdots 
\end{equation*}
Let $\gE^\dt$ be a complex of coherent sheaves on $\pW$ and $
\TT = \TT(\gE^\dt)$ its Tate resolution. 
 
\begin{proposition} (Betti invariants) \label{ConProTateB}
Property 1. of Conjecture \ref{ConConCx} is equivalent to $\TT(n,0)$
being a pure complex
\[ \oe(d_t)^{\beta_t} \pil \cdots \pil \oe(d_0)^{\beta_0}. \]
\end{proposition}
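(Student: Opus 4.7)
My plan is to establish the equivalence by a direct term-by-term comparison between the cohomological data of $\gE^\dt$ in the twist range $-t$ for $t \in [0,n]$ and the twist-graded structure of the subquotient $\TT(n,0)$ of the Tate resolution. This is essentially a bookkeeping identity, so I would expect the proof to proceed by unwinding the defining formula (\ref{MainLigTp}) on the cohomological and twist indices simultaneously.

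First I would rewrite the terms of the Tate resolution. Recall that
\[ \TT^p(\gE^\dt) = \bigoplus_{j \in \hele} \oe(j-p) \te_\kk \HH^j(\pW, \gE^\dt(p-j)). \]
Setting $i = j-p$ (so that $j = p+i$ and $p - j = -i$), the summand $\oe(i)$ appears in cohomological position $p$ with multiplicity $\dim_\kk \HH^{p+i}(\pW, \gE^\dt(-i))$. Consequently the subquotient $\TT(n,0)$, which retains precisely the summands $\oe(i)$ with $0 \leq i \leq n$, is determined by the values of $\HH^j(\pW, \gE^\dt(-t))$ for $t \in [0,n]$; and conversely these values are recovered as the multiplicities in $\TT(n,0)$.

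Next I would translate Property 1.\ through this dictionary. Property 1.\ asserts that for $t \in [0,n]$ the group $\HH^j(\pW, \gE^\dt(-t))$ is nonzero exactly when $t = d_k$ for some $d_k \in B$ and $j = d_k - k$. By the previous paragraph this is equivalent to the statement that $\oe(i)$ with $0 \leq i \leq n$ occurs in $\TT^p$ only if $i = d_k$ for some $k$ and $p + d_k = d_k - k$, i.e. $p = -k$, with multiplicity $\beta_k := \dim_\kk \HH^{d_k - k}(\pW, \gE^\dt(-d_k))$. This is exactly to say that $\TT(n,0)$ is the pure complex with $\oe(d_k)^{\beta_k}$ in cohomological position $-k$, which, since $d_0 < d_1 < \cdots < d_t$ matches increasing cohomological position from left to right, reads
\[ \oe(d_t)^{\beta_t} \pil \cdots \pil \oe(d_0)^{\beta_0}. \]

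I do not anticipate any substantial obstacle: the equivalence is a direct bijection between two ways of tabulating the same data. The only delicate point is keeping the sign conventions and indexings consistent, so that the identification of the twist $i$ on the exterior side with the shift $-t$ on the sheaf side, together with the inversion $p = -k$ between cohomological position and Betti index, correctly places $\oe(d_k)^{\beta_k}$ at position $-k$ and yields the arrows in the claimed direction.
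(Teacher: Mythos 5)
Your proof is correct and takes the same (only reasonable) route; the paper's own proof consists of the single word "clear", and what you have written is precisely the index bookkeeping that justifies it. The substitution $i = j - p$ in the term formula (\ref{MainLigTp}), the restriction of $\TT(n,0)$ to the twists $0 \leq i \leq n$, and the matching of the condition $(j,t) = (d_k - k, d_k)$ against the resulting multiplicities $\dim_\kk \HH^{p+i}(\pW, \gE^\dt(-i))$ do show that Property~1 is equivalent to $\TT(n,0)$ being the pure complex $\oe(d_t)^{\beta_t} \pil \cdots \pil \oe(d_0)^{\beta_0}$ concentrated in cohomological positions $-t,\ldots,0$.
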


\begin{proof}
This is clear.
\end{proof}

\begin{remark}
This is equivalent to the Beilinson monad
of the complex $\gE^\dt$, \cite{Bei} or see \cite[Sec.6]{EFS},
having pure term $(\Omega^{d_j}(d_j))^{\beta_j}$ 
in homological degree $j$. 
\end{remark}

\begin{proposition} (Homology invariants.) \label{ConProTateH}
Property 2. of Conjecture \ref{ConConCx} is equivalent to:
The $p$'th linear strand of $\TT(-1,-)$ is nonzero
iff the $p$'th homology strand $[h_p, h_{p+1}-2]$ of $H$ is nonempty.
In this case the linear strand is 
\[ \oe(-1)^{\alpha_{-1}^{-p+1}} \mto{d^{-p+1}} \oe(-2)^{\alpha_{-2}^{-p+2}}
 \pil \cdots \]
where the first term is in cohomological position $-p+1$. 
Furthermore
\begin{itemize}
\item[a'.] This linear strand is a resolution of $\ker d^{-p+1}$. 
\item[b'.] The dimension of the linear strand is $h_{p+1}-2$. 
\item[c'.] The smallest $t$ with a nonzero map $\oe(t) \pil \ker d^{-p+1}$
is for $t = h_p$. Alternatively the smallest degree generator of $\ker d^{-p+1}$
has degree $n - h_p$.
\end{itemize}
\end{proposition}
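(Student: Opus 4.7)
The plan is to identify the $p$-th linear strand of $\TT(-1,-)$ (in the sense of the proposition) with the $0$-th linear strand of the Tate resolution $\TT(\gF_p)$ of the single sheaf $\gF_p := H^{-p}(\gE^\dt)$, cohomologically shifted by $-p$ and restricted to positions $\geq 1$. Under this identification the equivalence between Property~2 of Conjecture~\ref{ConConCx} and properties $(a')$--$(c')$ reduces to an equivalence between properties of a single $1$-regular coherent sheaf and the corresponding portion of its Tate resolution, which can be read off directly.

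The reduction goes via the hypercohomology spectral sequence $E_2^{a,b} = H^a(\pW, H^b(\gE^\dt)(d)) \Rightarrow \HH^{a+b}(\pW, \gE^\dt(d))$. Under the $1$-regularity hypothesis on each $H^q(\gE^\dt)$ (Property~2a, vacuous when the sheaf vanishes), the terms $H^a(\pW, H^b(\gE^\dt)(d))$ vanish for $a \geq 1$ and $d \geq 1$, yielding $\HH^r(\pW, \gE^\dt(d)) = H^0(\pW, H^r(\gE^\dt)(d))$ for $d \geq 1$, in the style of Lemma~\ref{RegLemKoh}. Since the $\oe(k-r)$-summand of $\TT(\gE^\dt)$ at cohomological position $r$ has multiplicity $\HH^k(\pW, \gE^\dt(r-k))$, restriction to $\TT(-1,-)$ (where $r-k = d \geq 1$) with the choice $k = -p$ makes the $p$-th linear strand coincide with $\bR(H^0_*(\gF_p))$ truncated to positions $\geq 1$ and cohomologically shifted by $-p$.

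Granted this identification, two of the three equivalences are essentially immediate. The strand is nonzero iff $\gF_p \neq 0$, equivalently iff $[h_p, h_{p+1}-2]$ is nonempty. Property~$(b')$ matches Property~$2b$ because the Hilbert polynomial of $H^0_*(\gF_p)$ governing the growth of the strand has degree equal to $\dim \supp \gF_p = h_{p+1}-2$. Property~$(a')$ --- that the strand is a free resolution of $\ker d^{-p+1}$ --- is the standard consequence of the BGG correspondence for the $1$-regular sheaf $\gF_p$: the truncation $H^0_*(\gF_p)_{\geq 1}$ is a $0$-regular graded $\Sym(W)$-module, and its associated linear complex $\bR(H^0_*(\gF_p)_{\geq 1})$, which is the strand in question up to the cohomological shift, is a minimal linear free resolution over $E$ of its initial kernel.

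The main obstacle is property~$(c')$: that the smallest twist $t$ admitting a nonzero map $\oe(t) \to \ker d^{-p+1}$ equals $h_p = \min\{i : H^i(\pW, \gF_p(-i)) \neq 0\}$. The strategy is to exploit acyclicity of the full Tate resolution $\TT(\gF_p)$: a minimal generator of $\ker d^{-p+1}$ must be the image of an element from a preceding cohomological position in $\TT(\gF_p)$, and the relevant preceding contributions come from the higher linear strands $\bR(H^i_*(\gF_p))[-i]$ with $i \geq 1$. The term at cohomological position $0$ of the $i$-th such strand is $\oe(i) \te H^i(\pW, \gF_p(-i))$, so the smallest $i$ with $H^i(\pW, \gF_p(-i)) \neq 0$ supplies the minimal-twist generator, whose image under the Tate differential lands in $\ker d^{-p+1}$ in twist $t = h_p$. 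Making this chase precise, and verifying the internal-degree statement ``$n - h_p$'' for the generator, is the core computation; the converse direction then uses the same identifications in reverse, invoking the uniqueness of Tate resolutions up to homotopy.
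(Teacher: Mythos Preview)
Your approach is essentially the same as the paper's: both identify the $p$-th linear strand of $\TT(-1,-)$ with the positive-degree portion of the Tate resolution of the single sheaf $\gF_p = H^{-p}(\gE^\dt)$ (shifted by $p$), and then read off the equivalences $(2a,b,c) \Leftrightarrow (a',b',c')$ from standard facts about Tate resolutions of a single sheaf. The paper is slightly more careful in one place: for the converse direction $(a') \Rightarrow (2a)$ it does not assume the full spectral-sequence identification (which would be circular, since that identification already uses $1$-regularity of all $H^q(\gE^\dt)$), but instead uses only the unconditional equality $\HH^{-p}(\pW,\gE^\dt(t)) = H^0(\pW,\gF_p(t))$ for $t \gg 0$, and then argues that since the strand is by $(a')$ an injective resolution of $\ker d^{-p+1}$, uniqueness of minimal injective resolutions forces it to coincide with $\TT(\gF_p)[p]$ in all cohomological degrees $\geq -p+1$, whence $1$-regularity. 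Your phrase ``the same identifications in reverse, invoking uniqueness of Tate resolutions'' points at exactly this, so the idea is present, but you should make explicit that only the asymptotic identification is available at that stage.
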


\begin{example}
In the example of Subsection \ref{ConSubsecEks} we see that the $0$'th, 
$1$'st, and $2$'nd homology 
strands $H$ in (\ref{ConLigBHC}) 
are respectively $[0,0]$, $[2,2]$, and $[4,4]$. Considering
$\TT(-1,-)$ its strands are:
\[ \TT(-1, -)_{\langle 0 \rangle}: \oe(-1)^3 \mto{d^1} \oe(-2)^3 \pil \cdots .\]
which is of dimension $0$ with a nonzero map $\oe \pil \ker d^1$.
\[ \TT(-1,-)_{\langle 1 \rangle}: \oe(-1) \mto{d^0} \oe(-2)^3 \pil \oe(-3)^6 \pil \]
is of dimension $2$ with a nonzero map $\oe(2) \pil \ker d^0$.
\[ \TT(-1,-)_{\langle 2 \rangle}: \oe(-1)^3 \mto{d^{-1}} \oe(-2)^{15} \pil \oe(-3)^{45} \pil \]
is of dimension $4$ with a nonzero map $\oe(4) \pil \ker d^{-1}$. 
\end{example}

\begin{proof}[Proof of Proposition \ref{ConProTateH}.]
First note the following.

\noindent Fact 1. For $t \gg 0$ we have.
\[ \HH^{-p}(\pW, \gE^\dt(t)) = H^0(\pW, H^{-p}(\gE^\dt)(t)).\]
Thus the linear strand $\TT(-1,-)_{\langle p \rangle}$ has 
$t$'th term 
\begin{equation} \label{ConLigtterm}
 \oe(-t) \te_\kk H^0(\pW, H^{-p}(\gE^\dt)(t)) 
\end{equation}
when $t$ is large. 

\noindent Fact 2. By the form of the Tate resolution (\ref{MainLigTp}) 
of a coherent sheaf $\gF$, its regularity $r$ is precisely the 
smallest cohomological index $r$ such that $
\TT^{\geq r}(\gF)$ is a linear complex.

\medskip 
Assume now that Property 2. of Conjecture \ref{ConConCx} holds. By its part a.
and Fact 2. immediately above, the $t$'th term of the linear strand is
(\ref{ConLigtterm}) for $t \geq 1$. Thus the $p$'th linear strand
and $H^{-p}(\gE^\dt)$ are nonzero at the same time, and by assumption in
Conjecture \ref{ConConCx} this holds iff $[h_p, h_{p+1}-2]$ is nonempty.

If Property 2a. holds then a'. above holds by Fact 2. about the
regularity. If 2b. holds then clearly b'. holds. That 2c. implies
c'. follows by the form
of the Tate resolution for $H^{-p}(\gE^\dt)[p]$ when looking at
it in cohomological degree $-p$.

\medskip
Assume now the statements within Proposition \ref{ConProTateH}
above hold. Nonzero injective
resolutions over the exterior algebra are infinite.
Therefore a'. together with (\ref{ConLigtterm}) for $t \gg 0$,
imply that the $p$'th linear strand
is nonzero iff
$H^{-p}(\gE^\dt)$ is nonzero. And so this latter holds iff $[h_p, h_{p+1} - 2]$
is nonempty.

That 2b. follows from b'. is clear.
Also 2c. implies c'. by the form
of the Tate resolution for $H^{-p}(\gE^\dt)[p]$ when looking at
it in cohomological degree $-p$.
\end{proof}




Let
\begin{align*} \TT^{\prime} & =  \TT((\gE^\dt)^*[|B|-1])  
= \TT((\gE^\dt)^*)[|B| -1] \\
& = \TT((\gE^\dt)^\vee)(n)[|B|-1] = \Hom_\kk(\TT, \wedge^{m+1} W)(n)[|B| -1].
\end{align*}

In the same way as we proved the above Proposition \ref{ConProTateH},
the following holds.

\begin{proposition} (Cohomology invariants.) 
Property 3. of Conjecture \ref{ConConCx} is equivalent to:
The $p$'th linear strand of $\TT^{\prime}(-1,-)$ is nonzero iff
the $p$'th cohomology strand $[c_p, c_{p+1}-2]$ of $C$ is nonempty. Letting this
linear strand start as 
\[ \oe(-1)^{\gamma_{-1}^{-p+1}} \mto{(d^{\prime})^{-p+1}} 
\oe(-2)^{\gamma_{-2}^{-p+2}} \pil \cdots \]
where the first term must be in cohomological degree $-p+1$, 
it has the following properties.
\begin{itemize}
\item[a'.] The linear strand is a resolution of $\ker (d^{\prime})^{-p+1}$.
\item[b'.] The dimension of the linear strand is $c_{p+1} - 2$. 
\item[c'.] The smallest $t$ with a nonzero map 
$\oe(t) \pil \ker (d^{\prime})^{-p+1}$
is for $t = c_p$. Alternatively the smallest degree generator of 
$\ker (d^{\prime})^{-p+1}$ has degree $n - c_p$.
\end{itemize}
\end{proposition}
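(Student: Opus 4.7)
The plan is to deduce this proposition from Proposition \ref{ConProTateH} by applying that proposition to the dual complex $(\gE^\dt)^*[|B|-1]$ in place of $\gE^\dt$. By definition $\TT^{\prime} = \TT((\gE^\dt)^*[|B|-1])$ is the Tate resolution of this dual complex, so the linear strands of $\TT^{\prime}(-1,-)$ are exactly the objects that Proposition \ref{ConProTateH} describes, and the whole statement will follow once the correct dictionary between Property 3 for $\gE^\dt$ and Property 2 for $(\gE^\dt)^*[|B|-1]$ is put in place.

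First I would set up this dictionary. The cohomology sheaves $H^{-p}((\gE^\dt)^*[|B|-1])$ appearing in Property 3 take the role of the homology sheaves $H^{-p}(\gE^\dt)$ in Property 2, and the strand starts $c_p$ of $C$ take the role of the strand starts $h_p$ of $H$; the $1$-regularity, dimension, and smallest-twist conditions then match verbatim. The remark following Definition \ref{ConDefHomtrip} that $(\overline{B}, C, H)$ is itself a homology triplet justifies invoking Property 2 in this way.

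Second, with the dictionary in hand, I would reuse the proof of Proposition \ref{ConProTateH} essentially verbatim on $(\gE^\dt)^*[|B|-1]$. The main input is again the identification, for $t \gg 0$,
\[ \HH^{-p}(\pW, (\gE^\dt)^*[|B|-1](t)) = H^0(\pW, H^{-p}((\gE^\dt)^*[|B|-1])(t)), \]
which pins down the terms of $\TT^{\prime}(-1,-)_{\langle p \rangle}$ in high cohomological degree via the standard form (\ref{MainLigTp}) of the Tate resolution of a coherent sheaf. From this, the equivalences between parts a, b, c of Property 3 and their primed counterparts a$'$, b$'$, c$'$ in the statement follow exactly as in the previous proof: the $1$-regularity of $H^{-p}((\gE^\dt)^*[|B|-1])$ corresponds to the point at which its Tate resolution becomes linear to the right, and the smallest-degree generator of $\ker (d^{\prime})^{-p+1}$ is read off from the Tate resolution of $H^{-p}((\gE^\dt)^*[|B|-1])[p]$ in cohomological degree $-p$.

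The only nontrivial piece of bookkeeping, and the main (if minor) obstacle, is keeping track of the shift by $|B|-1$ and the twist by $n$ hidden in the identification $(\gE^\dt)^* = (\gE^\dt)^\vee(n)[-n]$ used in Subsection \ref{ZipSekTate}, so that the cohomological indices on the dual side line up precisely with the strand starts of $C$ as in the statement. Beyond this relabelling there is no new content compared with Proposition \ref{ConProTateH} and the formal duality properties of Tate resolutions recorded in Lemma \ref{MainLemTTrel}.
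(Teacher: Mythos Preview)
Your proposal is correct and matches the paper's approach exactly: the paper simply states that the proposition holds ``in the same way as we proved the above Proposition \ref{ConProTateH},'' i.e.\ by replaying that argument with $(\gE^\dt)^*[|B|-1]$ in the role of $\gE^\dt$, $\TT'$ in the role of $\TT$, and the strand data of $C$ in the role of that of $H$. Your remarks on the dictionary and the shift/twist bookkeeping are precisely the relabelling the paper leaves implicit.
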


\begin{example}
In the example of Subsection \ref{ConSubsecEks} 
there is one cohomology strand $[2,4]$. We see that 
\[ \TT^{\prime}(-1,-)_{\langle 0 \rangle}: \, \oe(-1)^8 \mto{d^\prime_1} 
\oe(-2)^{33} \pil \oe(-3)^{87} \pil \cdots .\]
It is of dimension $4$
and $2$ is the smallest $t$ with a nonzero map $\oe(t) \pil \ker
d^\prime_1$.
\end{example}

\begin{remark}
We see that Conjecture \ref{ConConCx} is not simply equivalent to a
statement on the form of the hypecohomology table 
of the Tate resolution. There is also the conditions that the
linear strands are the resolutions of the specified differential,
as well as on the degree of the minimal degree generator of
its kernel.
\end{remark}

\section{The conjecture on triplets of pure free squarefree 
complexes}  \label{SqfreeSec}

In this section we show that Conjecture \ref{ConConCx} implies
the Conjecture 2.11 in \cite{FlTr} on the existence of triplets
of pure free squarefree complexes with balanced degree triplets.

The procedure for this is demonstrated in the examples of Section \ref{Eks2Sec},
in particular we use Procedure \ref{EksLabPro}. The crucial thing is to prove
that when passing from $S(V \te W^*)$ to the quotient $S(V)$,
we divide out by a sequence which is regular for the homology modules
of the zip complex.

\noindent {\bf Note.} For the results in this 
section we assume that $\kk$ is an infinite field. This
is in order to ensure
 the expected codimension of some degeneracy loci
of maps of vector bundles.

\subsection{Degeneracy loci of vector bundles}

\begin{proposition} \label{SqfreeProDeg}
Let $X$ be scheme of finite type over a field $\kk$ (assumed infinite),
and $\gE$ a vector bundle of rank $e$ on $X$ generated by a subspace
$E$ of its global sections
$\Gamma(X, \gE)$. Let $E_i$ for $i = 1, \ldots, t$ be general
vector subspaces of $E$, all of these subspaces of dimension
$e$, the rank of $\gE$. This gives maps between vector bundles of the
same rank
$E_i \te \gO_X \mto{\alpha_i} \gE$. Then the locus in $X$ where 
$\alpha_i$ has corank $\geq q_i$ for each value $i = 1, \ldots, t$, 
has codimension $\geq \sum q_i^2$. 
\end{proposition}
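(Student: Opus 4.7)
The plan is to reduce the statement to a codimension count on the Grassmannian $\mathrm{Gr}(e,E)$ via an incidence variety argument.

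First I would translate the corank condition into a Schubert condition. Since $\gE$ is globally generated by $E$, evaluation at a point $x\in X$ yields a surjection $E\twoheadrightarrow \gE_x$ with kernel $K_x\subset E$ of codimension exactly $e$. The fibre of $\alpha_i$ at $x$ is the composite $E_i\hookrightarrow E\twoheadrightarrow\gE_x$, so $\alpha_i|_x$ has corank $\geq q_i$ if and only if $\dim(E_i\cap K_x)\geq q_i$, i.e.\ $E_i\in\Sigma_{q_i}(K_x)$, where
\[
\Sigma_a(L) := \{V\in\mathrm{Gr}(e,E) : \dim(V\cap L)\geq a\}.
\]
The classical Schubert codimension formula gives $\mathrm{codim}_{\mathrm{Gr}(e,E)}\Sigma_a(L) = a^2$ whenever $\dim L = \dim E - e$; this can be seen by viewing $\Sigma_a(L)$ as the rank-$\leq(e-a)$ locus of the map $\mathcal{S}\to (E/L)\otimes\gO$ between rank-$e$ bundles on $\mathrm{Gr}(e,E)$, whose expected codimension $a^2$ is classically attained.

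Next I would introduce the incidence variety
\[
W = \{(x,E_1,\dots,E_t)\in X\times\mathrm{Gr}(e,E)^t : E_i\in \Sigma_{q_i}(K_x)\ \text{for all } i\}
\]
and study its two projections. Projecting onto $X$: since the codimension of $\prod_i\Sigma_{q_i}(K_x)$ in $\mathrm{Gr}(e,E)^t$ equals $\sum q_i^2$ for \emph{every} $x$, all fibres are equicodimensional, and working on each irreducible component of $X$ separately yields
\[
\dim W = \dim X + \dim\mathrm{Gr}(e,E)^t - \sum_i q_i^2.
\]
Projecting $W$ onto $\mathrm{Gr}(e,E)^t$, the fibre over $(E_1,\dots,E_t)$ is exactly the degeneracy locus $Z\subset X$ in the statement. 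If this projection is dominant, its generic fibre has dimension $\dim W - \dim\mathrm{Gr}(e,E)^t = \dim X - \sum q_i^2$, yielding $\mathrm{codim}_X Z = \sum q_i^2$ on a non-empty Zariski open of $\mathrm{Gr}(e,E)^t$. If it is not dominant, then $Z$ is empty for a general choice and the bound is vacuous. The infiniteness of $\kk$ ensures that the non-empty open contains $\kk$-rational tuples, so ``general'' really produces honest subspaces.

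I do not foresee a serious obstacle beyond invoking the Schubert codimension formula, which is a classical input. The rest is a routine fibre-dimension computation, and the only minor book-keeping is handling possibly reducible $X$, which is resolved by treating components separately.
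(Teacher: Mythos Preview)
Your argument is correct, and it takes a genuinely different route from the paper's. The paper proceeds by induction on $t$: the base case $t=1$ is quoted from Fulton, \emph{Intersection Theory}, Example~14.3.2(d), which gives codimension $\geq q_1^2$ for the corank-$q_1$ locus of a single general $\alpha_1$; the inductive step restricts $\gE$ to the locus $X'\subseteq X$ already cut out by the first $t-1$ conditions, observes that $\gE|_{X'}$ is still generated by the image of $E$, and applies the $t=1$ case on $X'$ with the remaining general subspace $E_t$. Both proofs rest on the same classical input---the expected codimension $q^2$ of a rank-drop locus between bundles of equal rank---but you invoke it once on the Grassmannian as a Schubert codimension and then run a single fibre-dimension count on the incidence variety in $X\times\mathrm{Gr}(e,E)^t$, whereas the paper invokes it $t$ times on successively smaller subschemes of $X$. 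Your approach makes the meaning of ``general'' explicit as a nonempty open in the parameter space and handles all $t$ at once; the paper's inductive approach is shorter to write and avoids setting up the incidence variety. One small wording point: your conclusion should read $\mathrm{codim}_X Z\geq\sum q_i^2$ rather than equality, since the generic fibre of $W\to\mathrm{Gr}(e,E)^t$ has dimension at most $\dim W-\dim\mathrm{Gr}(e,E)^t$, and that inequality is exactly what the statement asserts.
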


\begin{proof} We will use induction on $t$. When $t = 1$, the codimension of 
$X$ is $q_1^2$, by \cite[Example 14.3.2(d)]{Fu}.

Suppose the statement holds for $t-1$. Let $X^\prime$ be the locus where
$\alpha_i$ has corank $\geq q_i$ for each value $i = 1, \ldots, t-1$. It has
codimension  $\sum_1^{t-1} q_i^2$.
Consider the restriction $E_t \te \gO_{X^\prime} \mto{\alpha_t} \gE_{|X^\prime}$.
Now the latter restricted vector bundle is generated by the image of 
$E$ in its global sections $\Gamma(X^\prime, \gE_{|X^\prime})$.  Since
$E_t$ is a general subspace of $E$, of dimension $e$, the map $\alpha_t$
will degenerate to corank $\geq q_t$ in codimension $q_t^2$ in $X^\prime$. 
Hence the locus in $X$ where the $\alpha_i$ degenerate as prescribed,
has codimension $\geq \sum_i q_i^2$. 
\end{proof}

 Let $\gE$ be a vector bundle, i.e. a locally free sheaf of finite rank $e$,
on a $\kk$-scheme $X$. Let $T$ be a subspace of the sections $\Gamma(X, \gE)$.
The map $T \te_\kk \gO_X \pil \gE$ defines a map and an exact sequence
\begin{equation} \label{ConLigRkvot}
 T \te_\kk \Sym(\gE) \pil \Sym(\gE) \pil \gR \pil 0.
\end{equation}
where the cokernel $\gR$ is a quasi-coherent sheaf of $\gO_X$-algebras.
The space $T$ gives global sections of the affine bundle 
$\VV = \VV_X(\gE)$ and they generate
a sheaf of ideals of $\gO_\VV$ defining a subscheme $\gX = \Spec_{\gO_X} \gR$.

Now we may stratify $X$ according to the rank of the map 
$T \te_\kk \gO_X \pil \gE$. Let $U_c$ be the open subset where the rank is
$\geq \dim_\kk T-c = t-c$. Then if $x \in U_c\backslash U_{c-1}$ we get an
exact sequence 
\[ T \te_\kk \Sym(\gE_{\kk(x)}) \pil \Sym(\gE_{\kk(x)}) \pil \gR_{\kk(x)} \pil 0 \]
where $\gR_{\kk(x)}$ is the quotient symmetric algebra generated by a 
vector space of dimension $e-t + c$. Hence the fiber
$\gX_{\kk(x)}$ has dimension $e-t+c$. We observe that the dimension of $\gX$ is 
less than or equal to the maximum of 

\begin{equation} \label{SqfreeLigMaxdim}
\max \{ \dim (X \backslash U_{c-1}) + e - t + c \}.
\end{equation}

Now let $X = \pW$, let $V$ be a vector space with basis $x_1, \ldots, x_n$,
and $\gE = V \te \gQ$ where $\gQ$ is the dual of the tautological subbundle 
of rank 
$m$ on $\pW$. For each $x_i$ chose a {\it general} subspace $E_i \sus W^*$ of
codimension one, so its dimension equals the rank of $\gQ$. 
Define the subspace 
\begin{equation} \label{SqfreeLigT}
T = \oplus_{i = 1}^n x_i \te E_i \sus \oplus_{i = 1}^n x_i \te W^*
= V \te W^*. 
\end{equation}

\begin{corollary} \label{SqfreeCorDegdim}

a. The locus where the composition
\[ \alpha: T \te_\kk \gO_{\pW} \inpil V \te_\kk W^* \te_\kk \gO_{\pW} \pil  
V \te_\kk \gQ
\] 
degenerates to rank
$\dim_\kk T - c$, has codimension $\geq c$. 

b. The dimension of $\gX = \Spec_{\gO_{\pW}} \gR$, the subscheme of  
$\VV_{\pW} (V \te \gQ)$ 
defined by the vanishing of $T$,
has dimension less than or equal to the dimension of $\pW$.  
\end{corollary}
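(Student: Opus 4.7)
The plan is to deduce part (a) from Proposition \ref{SqfreeProDeg} by decomposing everything along the basis $x_1, \ldots, x_n$ of $V$, and then feed part (a) into the dimension estimate (\ref{SqfreeLigMaxdim}) to conclude part (b). Concretely, $V \te \gQ$ splits as $\bigoplus_{i=1}^n x_i \te \gQ$ and $T = \bigoplus_i x_i \te E_i$, so $\alpha$ is the direct sum of the component maps $\alpha_i : x_i \te E_i \te \gO_{\pW} \pil x_i \te \gQ$. Each $\alpha_i$ is essentially the map $E_i \te \gO_{\pW} \pil \gQ$ between rank-$m$ bundles on $\pW$ obtained from the (general) inclusion of the codimension-one subspace $E_i \sus W^*$, composed with the tautological quotient $W^* \te \gO_{\pW} \pils \gQ$.

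For part (a), I would use that the corank of a direct sum is the sum of the coranks: the locus $\{\text{corank}\,\alpha \geq i\}$ is the finite union, over tuples $(q_1,\ldots,q_n)$ of non-negative integers with $\sum_j q_j \geq i$, of the loci $\{\text{corank}\,\alpha_j \geq q_j \text{ for all } j\}$. Proposition \ref{SqfreeProDeg}, applied to the bundle $\gQ$ of rank $m$ generated by the global sections $W^*$ with the $n$ general subspaces $E_j \sus W^*$ each of dimension $m = \text{rank}(\gQ)$, says the latter locus has codimension $\geq \sum_j q_j^2$. Since $q^2 \geq q$ for every non-negative integer $q$, we obtain $\sum_j q_j^2 \geq \sum_j q_j \geq i$, so each such stratum, and hence their finite union, has codimension $\geq i$.

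For part (b) I would apply the estimate (\ref{SqfreeLigMaxdim}) to the present situation. Here $e = \text{rank}(V \te \gQ) = nm$ equals $t = \dim_\kk T = nm$, so $e-t = 0$. The closed set $\pW \setminus U_{i-1}$ is precisely the locus where $\text{corank}\,\alpha \geq i$, which by part (a) has codimension $\geq i$ in $\pW$, hence dimension $\leq m-i$. Substituting into (\ref{SqfreeLigMaxdim}) yields
\[ \dim \gX \;\leq\; \max_{i \geq 0} \bigl\{ \dim(\pW \setminus U_{i-1}) + e - t + i \bigr\} \;\leq\; \max_{i \geq 0} \bigl\{ (m - i) + i \bigr\} \;=\; m \;=\; \dim \pW. \]

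The only nontrivial input is Proposition \ref{SqfreeProDeg}; the remainder is combinatorial, with the key elementary ingredient being that $q^2 \geq q$ on non-negative integers. Heuristically, this is exactly what forces the balanced choice $\dim_\kk T = \text{rank}(V \te \gQ)$ to be tight, each extra unit of corank costing at least one unit of codimension in the base and thereby exactly compensating the added fiber dimension. I do not anticipate a genuine obstacle in executing the plan.
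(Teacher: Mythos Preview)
Your proof is correct and follows essentially the same approach as the paper, which tersely says that part (a) follows from Proposition \ref{SqfreeProDeg} with the $\alpha_i$ being the component maps $x_i \te E_i \te \gO_{\pW} \pil x_i \te \gQ$, and part (b) from part (a) together with (\ref{SqfreeLigMaxdim}). You have simply made explicit the combinatorial step the paper suppresses: decomposing the corank-$i$ locus along tuples $(q_1,\ldots,q_n)$ and using $\sum q_j^2 \geq \sum q_j$ to pass from the quadratic bound of the proposition to the linear bound needed here.
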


\begin{proof}
Part a. follows by Proposition \ref{SqfreeProDeg}
by letting the $\alpha_i$ be the maps
$x_i \te E_i \te \gO_{\pW} \pil x_i \te \gQ$.
If each $\alpha_i$ degenerates to corank $q_i$, then $\alpha$ degenerates
to corank $\sum_i q_i$. If $\sum_i q_i \geq c$, then 
the degeneracy locus in $\PP(W)$ has codimension $\geq \sum_i q_i^2 \geq c$. 
Part b. follows by part a. and
the expression for the dimension given by (\ref{SqfreeLigMaxdim}).
\end{proof}

\subsection{Regular sequences}

If $u_1, \ldots, u_n$ in a $k$-algebra $R$ form a regular sequence
for the module $M$, then any basis for the vector space they generate,
$\langle u_1, \ldots, u_n \rangle \sus R$, forms a regular sequence, as is
easily seen by Koszul homology, \cite[Theorem 17.4, 17.6]{Ei}.
We then call this a regular subspace for the module $M$. 

\begin{proposition} \label{SqfreeProRegspace}
Let $\dim_\kk V \geq \dim \pW$ and 
suppose $S(\gE)$ is a Cohen-Macaulay $\Sym(V \te W^*)$-module. Then for
general $E_i \sus W^*$ of codimension one, 
the subspace  $T = \oplus_i x_i \te E_i 
\sus V \te W^*$ is a {\it regular subspace} for this module.
\end{proposition}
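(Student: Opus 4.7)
For a graded Cohen-Macaulay module $M$ of Krull dimension $d$ over a positively graded polynomial ring and a subspace $U$ of positive-degree elements with $\dim_\kk U = r$, a basis of $U$ is an $M$-regular sequence if and only if $\dim M/UM = d-r$ (the inequality $\dim M/UM \geq d-r$ being Krull's generalized principal ideal theorem, and equality forcing regularity by the Cohen-Macaulay hypothesis). By Proposition~\ref{RegProDim} the module $S(\gE)$ has Krull dimension $nm+\delta$, with $\delta = \dim \Supp \gE$, and $\dim_\kk T = nm$ since each $E_i$ has dimension $m$. So it suffices to establish
\[ \dim_S\bigl(S(\gE)/T\cdot S(\gE)\bigr) \leq \delta. \]

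\textbf{Geometric reinterpretation.} Since $(V\te W^*)/T \cong V$, the closed subscheme $L\sus X=\Spec\Sym(V\te W^*)$ cut out by $T$ is the linear subvariety $\Spec\Sym(V)$, of codimension $nm$ in $X$. With the notation of Subsection~\ref{RegSubsecBasic}, the support of $S(\gE)$ in $X$ is $\tilde Y := q(Z\cap p^{-1}\Supp\gE)$ (using properness of $q$), and by the very definition of $\gX$ preceding Corollary~\ref{SqfreeCorDegdim}, $\gX = Z\cap q^{-1}(L)$. The set-theoretic identity $L\cap q(A)=q(q^{-1}(L)\cap A)$ applied to $A=Z\cap p^{-1}\Supp\gE$ yields
\[ L\cap \tilde Y \;=\; q\bigl(\gX\cap p^{-1}\Supp\gE\bigr), \]
so $\dim_S(S(\gE)/TS(\gE)) = \dim(L\cap \tilde Y) \leq \dim(\gX\cap p^{-1}\Supp\gE)$.

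\textbf{Main dimension bound.} It remains to show that for general $E_i$,
\[ \dim\bigl(\gX\cap p^{-1}\Supp\gE\bigr) \leq \delta. \]
This is the relative version of Corollary~\ref{SqfreeCorDegdim}(b): apply Proposition~\ref{SqfreeProDeg} with $X$ replaced by the finite-type scheme $\Supp\gE$ and the vector bundle $\gQ|_{\Supp\gE}$ of rank $m$, which is still generated by the global sections $W^* = \Gamma(\pW,\gQ)$. For general subspaces $E_i\sus W^*$ of dimension $m$, the locus in $\Supp\gE$ where each $x_i\te E_i\te \gO \to x_i\te \gQ$ has corank $\geq q_i$ has codimension $\geq \sum q_i^2 \geq \sum q_i$ in $\Supp\gE$. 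Hence the corank-$\geq k$ locus of $\alpha = \oplus \alpha_i$ on $\Supp\gE$ has codimension $\geq k$, and the fiber of $\gX\to\pW$ over a corank-$k$ point is an affine space of dimension $k$. The bound (\ref{SqfreeLigMaxdim}) then yields $\dim(\gX\cap p^{-1}\Supp\gE) \leq \delta$, as required.

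\textbf{Main obstacle.} The central point is passing from the absolute Corollary~\ref{SqfreeCorDegdim} to its relative version over $\Supp\gE$: one must know that for generic $E_i$ the degeneracy loci of the $\alpha_i$ meet $\Supp\gE$ in the expected codimension. This is exactly why we need $\kk$ infinite (so that the relevant open set of tuples $(E_1,\dots,E_n)\in\mathrm{Gr}(m,W^*)^n$ has a $\kk$-point) and why the generality of the $E_i$ is essential; the assumption $\dim_\kk V\ge\dim\pW$ enters only indirectly, via its role in making $S(\gE)$ have the expected dimension $nm+\delta$ in Proposition~\ref{RegProDim}.
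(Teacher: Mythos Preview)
Your proof is correct and follows the same overall strategy as the paper: bound the dimension of $S(\gE)/T\cdot S(\gE)$ via the degeneracy-locus estimate of Proposition~\ref{SqfreeProDeg} and the fiber-dimension formula (\ref{SqfreeLigMaxdim}), then invoke Cohen--Macaulayness. The difference is that you work relative to $\Supp\gE$ throughout, whereas the paper argues globally on $\pW$: it bounds $\dim Y'\le m$ using Corollary~\ref{SqfreeCorDegdim}(b) and the birationality of $Z\to Y$ (this is where $\dim_\kk V\ge\dim\pW$ is actually used), and then writes $\dim Y - \dim_\kk T = \dim S(\gE) - \dim_\kk T$, which tacitly assumes $\dim S(\gE)=\dim Y=(n+1)m$, i.e.\ that $\gE$ has full support. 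Your relative argument yields $\dim S(\gE)/T\cdot S(\gE)\le\delta$ directly, so it covers the proposition as stated (arbitrary $\gE$) and, as you observe, does not invoke birationality. One small point: you only need $\Supp S(\gE)\subseteq\tilde Y$, not equality, and that containment is immediate from $\Supp q_*\gF\subseteq q(\Supp\gF)$ for proper $q$; the rest of your set-theoretic identities are fine.
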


\begin{proof}The subscheme $\VV(V)$ of $\VV(V \te W^*)$ is defined by the 
vanishing of the subspace $T$ of $V \te W^*$.
Let $Z^\prime$ be the pullback in the diagram
\[ 
\xymatrix{ Z^\prime \ar@{^{(}->}[r] \ar@{^{(}->}[d] &\VV(V) \times \pW 
\ar@{^{(}->}[d]\\
 Z \ar@{^{(}->}[r] &\VV(V \te W^*) \times \pW.
}
\]
Since $Z = \VV_{\pW}(V \te_\kk \gQ)$ we see that $Z^\prime$ is the 
subscheme of $Z$ defined by the vanishing of the sections $T$ of 
$V \te \gQ$ given by the composition $\alpha$ in Corollary
\ref{SqfreeCorDegdim}.a.
By part b. of this, 
the dimension of $Z^\prime$ is less than or equal
to $\dim \pW$. 
Since $\dim_\kk T$ equals the rank of $V \te \gQ$, the dimension of $Z$ is 
$\dim \pW + \dim_\kk T$ 
and so $\dim Z^\prime \leq \dim Z - \dim_\kk T$. 

Let $Y^\prime$ be the pullback in the diagram
\[
\xymatrix{Y^\prime \ar@{^{(}->}[r] \ar@{^{(}->}[d]
& \VV(V) \ar@{^{(}->}[d] \\
 Y \ar@{^{(}->}[r] & \VV(V \te W^*).
} \]
Since the image of $Z$ is $Y$, the image of $Z^\prime$ is $Y^\prime$. 
Since $\dim Y = \dim Z$ by \cite[Prop.6.1.1]{We} this gives 
\[ \dim Y^\prime \leq \dim Z^\prime \leq \dim Z - \dim_\kk T = 
\dim Y - \dim_\kk T. \]
The sheaf $\gO_Z \te p^*(\gE)$ has support $Z$ and so the support of 
$S(\gE)$ is 
$Y$. Denote by $S(\gE)^\prime$ the module $S(\gE) \te_{\Sym(V \te W^*)} \Sym(V)$ 
where $\Sym(V) = \Sym(V \te W^*)/(T)$. Then $S(\gE)^\prime$ 
is supported on $Y^\prime$ and so
\[ \dim S(\gE)^\prime \leq \dim Y^\prime \leq \dim Y - \dim_\kk T =
\dim S(\gE) - \dim_\kk T. \]
Since $S(\gE)$ is 
a Cohen-Macaulay module and $S(\gE)^\prime = S(\gE)/(T \cdot S(\gE))$,
the subspace $T$ must be a regular subspace for the module $S(\gE)$. 
\end{proof}

\begin{lemma} \label{SqfreeLemOmen}
The module $S(\gO_{\pW}(-1))$ is a Cohen-Macaulay 
$\Sym(V \te W^*)$-module.
\end{lemma}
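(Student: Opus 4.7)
The plan is to deduce this immediately from Proposition \ref{MainProRes}b applied to the sheaf $\gF = \gO_{\pW}(-1)$. That proposition says the zip complex $\funTo{V}_W(\TT(\gO_{\pW}(-1)))$ is a minimal free resolution of a Cohen-Macaulay $\Sym(V\te W^*)$-module precisely when $\gO_{\pW}(-1)$ is locally Cohen-Macaulay of pure dimension, is $1$-regular, and is $(d-n-1)$-coregular with $d=\dim\gO_{\pW}(-1)=m$. Combined with part a of the same proposition, the Cohen-Macaulay module obtained as the $0$-th homology of the resolution is precisely $S(\gO_{\pW}(-1))$.

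First I would check the three conditions. Being a line bundle, $\gO_{\pW}(-1)$ is locally free, hence locally Cohen-Macaulay of pure dimension $m$. For $1$-regularity one needs $H^i(\pW,\gO_{\pW}(-i))=0$ for $i\geq 1$, which is the standard vanishing for line bundles on $\pW$ in the Castelnuovo range.

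The main (but still short) point is the coregularity. By Lemma \ref{MainLemEdual}, $(m-n-1)$-coregularity of $\gO_{\pW}(-1)$ amounts to the vanishing $H^{m-i}(\pW,\gO_{\pW}(i-n-2))=0$ for every $i>0$. For $0<i<m$ the cohomology group $H^{m-i}(\pW,\gO_{\pW}(k))$ is zero for all $k$, so these cases are automatic. The only substantive case is $i=m$, where one needs $H^0(\pW,\gO_{\pW}(m-n-2))=0$, i.e.\ $m-n-2<0$, i.e.\ $n\geq m-1$. This is ensured by the standing hypothesis $\dim_\kk V\geq \dim\pW$ used throughout Subsection \ref{RegSubsecBasic} and Proposition \ref{SqfreeProRegspace} (so $n\geq m$, which is stronger than needed).

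With all three hypotheses verified, Proposition \ref{MainProRes}b gives that $\funTo{V}_W(\TT(\gO_{\pW}(-1)))$ is a minimal free resolution of a Cohen-Macaulay module, and Proposition \ref{MainProRes}a identifies this module as $S(\gO_{\pW}(-1))$, proving the lemma. Thus there is essentially no obstacle: the lemma is a direct application of the earlier general criterion, with the slight bookkeeping being the coregularity computation, which reduces to elementary cohomology of line bundles on projective space.
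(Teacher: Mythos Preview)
Your proof is correct. Both you and the paper use that $\funTo{V}_W(\TT(\gO_{\pW}(-1)))$ is a resolution of $S(\gO_{\pW}(-1))$ (Proposition~\ref{MainProRes}a), but you diverge on how to certify Cohen--Macaulayness. You invoke the packaged criterion of Proposition~\ref{MainProRes}b and verify the $(m-n-1)$-coregularity of $\gO_{\pW}(-1)$ directly from line-bundle cohomology, which reduces to $H^0(\pW,\gO_{\pW}(m-n-2))=0$, i.e.\ $n\geq m-1$. The paper instead writes out the zip complex explicitly from the Tate resolution of $\gO_{\pW}(-1)$, reads off its length $n-m$, computes $\dim S(\gO_{\pW}(-1))=(n+1)m$ from Proposition~\ref{RegProDim}, and checks the Auslander--Buchsbaum equality $(n+1)m+(n-m)=n(m+1)$. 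Your route is cleaner once Proposition~\ref{MainProRes}b is in hand; the paper's route is more explicit and yields the actual shape of the resolution as a by-product. Both implicitly need $n\geq m$, which you correctly flag as the ambient hypothesis from Proposition~\ref{SqfreeProRegspace}.
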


\begin{proof}
This module has dimension $(n+1)m$ by Proposition \ref{RegProDim}.
Since $\gO_{\pW}(-1)$ is
$1$-regular, by Proposition \ref{MainProRes} 
the module has resolution given by $\funTo{V}_W(\gO_{\pW}(-1))$. 
The terms of the Tate resolution of $\gO_{\pW}(-1)$ are 
\[ \cdots \pil \oe(m+2)^{\binom{m+2}{2}} \pil \oe(m+1)^{m+1}
\pil \oe(m) \pil \oe(-1) \pil \cdots \]
and so the associated zip complex is 
\[ \wedge^n V \te S(-n)^{\binom{n}{n-m}} \pil \cdots \pil 
\wedge^{m+1} V \te S(-m-1)^{m+1} \pil \wedge^m V \te S(-m). \]
This complex has length $n-m$. 
Since it resolves a module of dimension  $(n+1)m$ and
\[ (n+1)m + (n-m) = n(m+1) = \dim \Sym(V \te W^*) \]
this module is Cohen-Macaulay.
\end{proof}

Let $\Wp \sus W$ and consider the projection 
$\pW \overset{\pi}{\dashrightarrow} \pWp$ with center the subspace 
$\PP(W / \Wp) \sus \pW$. Let $\gG$ be a coherent sheaf on $\pW$ whose
support is disjoint from $\PP(W /\Wp)$. Pushing forward, we get a 
coherent sheaf $\pi_* \gG$ on $\pWp$. It is well known that the 
cohomology $H^p(\pW, \gG(i)) = H^p(\pWp, (\pi_* \gG) (i))$. In fact
we have:
\begin{proposition} \label{SqfreeProProj} Let $\gG$ be a coherent sheaf
on $\pW$ whose support is disjoint from $\PP(W/W^\prime)$. Let
$E^\prime = \oplus \wedge^i (W^\prime)^*$. 
The Tate resolution $\TT(\pi_* \gG) = \Hom_E (\Ep, \TT(\gG))$. 
\end{proposition}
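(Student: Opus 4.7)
The plan is to match the terms of $\Hom_E(\Ep, \TT(\gG))$ with those of $\TT(\pi_*\gG)$ via a hypercohomology comparison, then promote the term match to an identification of complexes using uniqueness of Tate resolutions.

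Since $\Hom_E(\Ep, \oe(j)) = \oep(j)$, the $p$'th term of $\Hom_E(\Ep, \TT(\gG))$ is $\bigoplus_j \oep(j-p) \otimes H^j(\pW, \gG(p-j))$, whereas by the defining formula \eqref{MainLigTp} the $p$'th term of $\TT(\pi_*\gG)$ is $\bigoplus_j \oep(j-p) \otimes H^j(\pWp, (\pi_*\gG)(p-j))$. So it suffices to establish the identity $H^j(\pW, \gG(i)) = H^j(\pWp, (\pi_*\gG)(i))$ for all $i,j$.

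To get this identity, set $U = \pW \setminus \PP(W/W')$ with open inclusion $\iota : U \hookrightarrow \pW$; on $U$ the rational map $\pi$ restricts to an honest morphism to $\pWp$. First, $\supp \gG \sus U$ forces $\gG = \iota_*(\gG|_U)$ and $R^q \iota_*(\gG|_U)=0$ for $q>0$ (the stalks vanish on $U$ by affine coherent cohomology vanishing, and vanish off $U$ because $\supp\gG$ stays away from a neighborhood of any such point), so $H^*(\pW,\gG(i)) = H^*(U, \gG(i)|_U)$. Second, over each standard affine $D_+(\ell) \sus \pWp$ with $\ell \in W'$, one has $\pi^{-1} D_+(\ell) = D_+(\ell) \sus \pW$, so $\pi$ is a Zariski-locally trivial $\mathbb{A}^{w-w'}$-bundle, in particular affine, giving $R^q\pi_*=0$ on quasicoherent sheaves and $H^*(U,-)=H^*(\pWp,\pi_*(-))$. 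Third, viewing a point of $\pW = \text{Proj}(\Sym W)$ as a surjection $W \twoheadrightarrow L$, both $\gO_{\pW}(1)|_U$ and $\pi^*\gO_{\pWp}(1)$ have fiber $L$ at that point and so are canonically isomorphic; the projection formula then gives $\pi_*(\gG(i)|_U) = (\pi_*\gG)(i)$.

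With the terms matched, I would finish by verifying that $\Hom_E(\Ep, \TT(\gG))$ is itself a Tate resolution, at which point the uniqueness of Tate resolutions (Subsection \ref{ZipSekTate}) identifies it with $\TT(\pi_*\gG)$. Since $E$ is Frobenius, each $\oe(j)$ is injective and $\Ext^i_E(\Ep, \oe(j)) = 0$ for $i>0$; splitting the acyclic $\TT(\gG)$ into short exact sequences of free ($=$ injective) $E$-modules and applying $\Hom_E(\Ep,-)$ preserves exactness, so $\Hom_E(\Ep,\TT(\gG))$ is acyclic with finitely generated terms, and it is minimal because its graded Betti numbers already coincide with those of the minimal resolution $\TT(\pi_*\gG)$. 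The main obstacle is the clean line-bundle identification $\gO_{\pW}(1)|_U \iso \pi^*\gO_{\pWp}(1)$ along with setting up $\pi$ as a genuine morphism on $U$; this is routine but convention-sensitive, and once secured the rest is formal manipulation with Tate resolutions and standard cohomological vanishing.
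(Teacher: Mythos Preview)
Your cohomology comparison and the verification that $\Hom_E(\Ep,\TT(\gG))$ is acyclic are fine, but the final step has a real gap. ``Uniqueness of Tate resolutions'' says that an object of $D^b(\cohpWp)$ determines its Tate resolution up to isomorphism; it does \emph{not} say that a minimal acyclic complex of free $\Ep$-modules is determined by its graded terms. Different coherent sheaves on $\pWp$ can have identical cohomology tables, hence yield non-isomorphic Tate resolutions with the same terms. So matching the $N^p_{p-j}$ alone does not identify $\Hom_E(\Ep,\TT(\gG))$ with $\TT(\pi_*\gG)$; you still owe an argument that this particular Tate resolution corresponds to $\pi_*\gG$ under the equivalence of Subsection~\ref{ZipSekTate}. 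Your minimality justification is also off: having the same Betti numbers as some minimal complex does not force minimality. (Minimality does hold, but for a direct reason: $\Hom_E(\Ep,-)$ sends a map $\oe(j)\to\oe(j')$ given by $\underline{\beta}\in\wedge^{j-j'}W^*$ to the map given by the image of $\underline{\beta}$ in $\wedge^{j-j'}(W')^*$, so if all degree-zero components vanish in $\TT(\gG)$ they still vanish after applying the functor.)

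The fix is short: compare differentials in high cohomological degree. For $p\gg 0$ the complex $\TT(\gG)$ is the linear complex $\bR(\Gamma_*\gG)$ of (\ref{ExtLigBR}), and applying $\Hom_E(\Ep,-)$ replaces each $\oe(-p)$ by $\oep(-p)$ and each differential $\sum y_i^*\otimes y_i\cdot$ by its image under $W^*\to(W')^*$, yielding exactly $\bR$ over $\Ep$ applied to $\Gamma_*\gG$ viewed as a $\Sym(W')$-module. Since $\Gamma_*\gG\cong\Gamma_*(\pi_*\gG)$ as $\Sym(W')$-modules (by your projection-formula step), the two complexes agree, differentials included, for $p\gg 0$. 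A Tate resolution is determined by any right truncation (take the kernel of a far-right differential and resolve projectively to the left), so this pins down $\Hom_E(\Ep,\TT(\gG))\cong\TT(\pi_*\gG)$.

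For comparison, the paper gives no in-text argument here; it simply cites \cite[Thm.~7.1.2]{FlDesc}.
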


\begin{proof}
This is \cite[Thm.7.1.2]{FlDesc}.
\end{proof}

\begin{corollary}
Suppose $\gG$ is $1$-regular. Then $V \te (W / W^\prime)^*$ is a regular
subspace of $\Sym(V \te W^*)$ for the module $S(\gG)$. Furthermore
\[ S(\pi_* \gG)  = S(\gG) \te_{\Sym(V \te W^*)} \Sym (V \te {\Wp}^*). \]
\end{corollary}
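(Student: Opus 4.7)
The plan is to deduce both claims at once from Proposition \ref{MainProRes}.a together with the compatibility between zipping and restriction recorded in Lemma \ref{ExtLemRes}. Concretely, I will show that tensoring the zip complex of $\gG$ over $S=\Sym(V\te W^*)$ with $\Sp=\Sym(V\te(\Wp)^*)$ reproduces, up to canonical isomorphism, the zip complex of $\pi_*\gG$; the vanishing of the higher $\Tor^S(S(\gG),\Sp)$ then yields both the regular-sequence assertion and the tensor-product identity.

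First, since $\gG$ is $1$-regular, Proposition \ref{MainProRes}.a says that $F_\dt := \funTo{V}_W(\TT(\gG))$ is a free resolution of the $S$-module $S(\gG)$. Using Proposition \ref{SqfreeProProj} we have $\TT(\pi_*\gG) = \Hom_E(\Ep,\TT(\gG))$; reading this off the explicit shape (\ref{MainLigTp}) of the terms of a Tate resolution gives $H^j(\pWp,(\pi_*\gG)(i))=H^j(\pW,\gG(i))$ for all $i,j$, so $\pi_*\gG$ is $1$-regular as well. A second application of Proposition \ref{MainProRes}.a therefore shows that $\funTo{V}_{\Wp}(\TT(\pi_*\gG))$ is a free resolution of the $\Sp$-module $S(\pi_*\gG)$.

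The central step is to apply Lemma \ref{ExtLemRes} term-by-term (and map-by-map) to the Tate resolution of $\gG$, producing a natural isomorphism of complexes of $\Sp$-modules
\[
F_\dt \te_S \Sp \;\iso\; \funTo{V}_{\Wp}\bigl(\Hom_E(\Ep,\TT(\gG))\bigr) \;=\; \funTo{V}_{\Wp}(\TT(\pi_*\gG)).
\]
Combining with the previous paragraph, $F_\dt\te_S\Sp$ is a resolution of $S(\pi_*\gG)$, so $\Tor^S_i(S(\gG),\Sp)=0$ for $i\geq 1$ while $S(\gG)\te_S\Sp = \Tor^S_0(S(\gG),\Sp) = S(\pi_*\gG)$, which is exactly the second claim.

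To finish, observe that $\Sp = S/(V\te(W/\Wp)^*)$. The vanishing of $\Tor^S_{\geq 1}(S(\gG),\Sp)$ is equivalent, via the Koszul complex on any basis of the subspace $V\te(W/\Wp)^*$, to that basis being a regular sequence on $S(\gG)$; by the convention on regular subspaces recalled just before Proposition \ref{SqfreeProRegspace}, this gives the first claim. I do not anticipate any genuine obstacle: the argument is formal once the two key functorial compatibilities are in place, and the only subtle ingredient, the inheritance of $1$-regularity by $\pi_*\gG$, comes for free from the Tate-resolution identification.
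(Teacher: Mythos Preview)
Your proof is correct and follows essentially the same route as the paper: apply Proposition \ref{MainProRes}.a to get a resolution of $S(\gG)$, use Lemma \ref{ExtLemRes} and Proposition \ref{SqfreeProProj} to identify $F_\dt\te_S\Sp$ with the zip complex of $\pi_*\gG$, observe that $\pi_*\gG$ is again $1$-regular so this is a resolution of $S(\pi_*\gG)$, and conclude via Koszul homology. The only cosmetic difference is that you extract the $1$-regularity of $\pi_*\gG$ from the Tate-resolution identification, whereas the paper invokes the well-known cohomology comparison stated just before Proposition \ref{SqfreeProProj}.
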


\begin{proof} The complex $\funTo{V}_W(\TT(\gG))$ is a resolution of 
$S(\gG)$ by Proposition \ref{MainProRes}. Let $\Sp = \Sym(V \te {\Wp}^*)$. Then 
by Lemma \ref{ExtLemRes}
\begin{equation} \label{SqfreeLigRes}
\funTo{V}_W(\gG) \te_S \Sp = \funTo{V}_{\Wp}(\Hom_E (\Ep, \TT(\gG))). 
\end{equation}
By Proposition \ref{SqfreeProProj}, the latter is 
$\funTo{V}_{\Wp}(\TT(\pi_*\gG))$. But since 
$\pi_* (\gG)$ is also a $1$-regular coherent sheaf on $\pWp$,
(\ref{SqfreeLigRes}) is a resolution of $S(\pi_*\gG)$,
and so 
\[ \Tor_i^S(S(\gG), \Sp) = 0,  \quad i > 0. \]
But then by Koszul homology, \cite[Thm.17.4, 17.6]{Ei}
the space $V \te (W/\Wp)^*$ is a regular subspace of 
$\Sym(V \te W^*)$ for the module $S(\pi_* \gG)$. 
\end{proof}

\begin{theorem} \label{SqfreeTheReg}
Let $\gF$ be a $1$-regular coherent sheaf on $\pW$, and 
$V = \langle x_1, \ldots, x_n \rangle$ a vector space of dimension 
$\geq$ that of the support of $\gF$. 
Let $E_i \sus W^*$ be general subspaces of codimension $1$ for 
$i = 1, \ldots, n$. The subspace $T = \oplus_{i = 1}^n x_i \te E_i
\sus V \te W^*$ is a regular subspace of $\Sym(V \te W^*)$ for the 
module $S(\gF)$. 
\end{theorem}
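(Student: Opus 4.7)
The plan is to proceed by induction on the codimension $c = \dim \pW - \dim \Supp \gF$ of the support of $\gF$ in $\pW$, exhibiting a single regular $T = \oplus_i x_i \te E_i$ of the required form and then invoking openness of the regularity condition (in the Grassmannian parameterizing $(E_1, \ldots, E_n)$) to extend to the general case.

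For the inductive step $c \geq 1$, I would choose a codimension-$1$ subspace $\Wp \sus W$ so that the projection center $\PP(W/\Wp)$---a single point---lies outside $\Supp \gF$, which is possible since $\Supp \gF \subsetneq \pW$. The Corollary preceding the theorem supplies that $V \te (W/\Wp)^*$ is a regular subspace of $\Sym(V \te W^*)$ for $S(\gF)$, with quotient canonically identified with $S(\pi_*\gF)$, where $\pi : \pW \dashrightarrow \pWp$ is the projection. Since $\pi$ restricts to an isomorphism on $\Supp \gF$, the sheaf $\pi_*\gF$ is $1$-regular on $\pWp$ with support of the same dimension as $\Supp \gF$, hence of codimension $c-1$, and $\dim V \geq \dim \Supp \pi_*\gF$ is preserved. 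The induction hypothesis applied to $\pi_*\gF$ yields general $E_i^\prime \sus \Wp^*$ of codimension $1$ such that $T^\prime = \oplus_i x_i \te E_i^\prime$ is regular for $S(\pi_*\gF)$. Concatenating regular sequences, the subspace $T^\prime + V \te (W/\Wp)^* = \oplus_i x_i \te E_i$ with $E_i = E_i^\prime + (W/\Wp)^*$ of codimension $1$ in $W^*$ is regular for $S(\gF)$, furnishing the required specific example.

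The base case $c = 0$ has $\Supp \gF = \pW$ and $\dim V \geq m = \dim \pW$, so Proposition \ref{SqfreeProRegspace} can be applied once $S(\gF)$ is shown to be Cohen-Macaulay over $\Sym(V \te W^*)$. This is the hardest part of the proof. The natural route is via the zip complex: by Proposition \ref{MainProRes}.a, $\funTo{V}_W(\TT(\gF))$ is a minimal free resolution of $S(\gF)$, and since $\wedge^{p+j}V = 0$ for $p + j > n$ the resolution has length at most $n$, giving $\text{depth}\, S(\gF) \geq nm = \dim T$. For Cohen-Macaulay-ness one needs the sharper bound that the length equals $n - m$, matching the codimension $\dim S - \dim S(\gF) = n(m+1) - m(n+1)$. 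This bound holds cleanly when $\gF$ has no middle cohomology (for example vector bundles with supernatural cohomology or twists of $\gO_{\pW}$); in the general case, a finer argument is required, for instance by filtering $\gF$ via sheaves $\gO_L$ for linear subspaces $L \sus \pW$---whose $S$-modules are Cohen-Macaulay by reduction to the full-support case on $L$ and Lemma \ref{SqfreeLemOmen}---and tracking depth through the resulting short exact sequences, together with the dimension bound from Corollary \ref{SqfreeCorDegdim}.b.
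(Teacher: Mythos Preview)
Your inductive step for $c \geq 1$ matches the paper's projection argument essentially verbatim. The gap is in the base case $c = 0$. You attempt to invoke Proposition \ref{SqfreeProRegspace} by asserting that $S(\gF)$ is Cohen--Macaulay for an arbitrary $1$-regular $\gF$ with full support, but this is false in general: if $\gF$ has nonvanishing intermediate cohomology $H^j(\pW,\gF(-t))$ for some $0 < j < m$ and $t$ in the range $(n-m+j,\,n]$, the zip complex $\funTo{V}_W(\TT(\gF))$ acquires terms in homological degree $> n-m$, so its length exceeds the codimension $n-m$ and $S(\gF)$ is not Cohen--Macaulay. Your fallback of ``filtering $\gF$ via sheaves $\gO_L$'' does not work either, since the Grothendieck-group decomposition $[\gF] = \sum a_i [\gO_{L_i}]$ does not come from an actual filtration by short exact sequences, and there is no reason for depth to be controlled in the way you need.

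The paper sidesteps this entirely. It inducts on $\dim \pW$ rather than on the codimension of the support, and in the full-support case it does \emph{not} prove $S(\gF)$ is Cohen--Macaulay. Instead, since $\gF(1)$ is globally generated, there is a short exact sequence
\[ 0 \pil \gO_{\pW}(-1)^r \pil \gF \pil \gG \pil 0 \]
with $\gG$ supported on a proper closed subscheme. Applying $S(-)$ and using $H^1(\pW,\gS(\gO_{\pW}(-1))) = 0$ yields a short exact sequence of $S$-modules. Now $T$ is regular for $S(\gO_{\pW}(-1))^r$ by Lemma \ref{SqfreeLemOmen} and Proposition \ref{SqfreeProRegspace} (this is the only place Cohen--Macaulayness is needed, and only for the specific sheaf $\gO_{\pW}(-1)$), and $T$ is regular for $S(\gG)$ by the not-full-support case already established together with the induction on $\dim \pW$. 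Hence $T$ is regular for $S(\gF)$. Your inductive scheme on codimension gives no such leverage in the base case, since all values of $\dim \pW$ occur simultaneously at $c = 0$.
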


\begin{proof}
We will use induction on the dimension of $\pW$. If $\dim \pW = 0$
there is nothing to prove, since then $\dim_\kk W = 1$ and $T = 0$. 

Suppose $\dim \pW > 0$. If the support of $\gF$ is not all of
$\pW$ we
project down to a projective space $\pWp$ such that
$\dim \pWp = \dim \Supp \gF$, with a center of projection
disjoint from the support of
$\gF$. Then  by the previous corollary 
$ T^\prime = \oplus_{i=1}^n x_i \te (W/{\Wp}^*) \sus V \te W^*$
is a regular subspace for $S(\gF)$ and 
\[ S(\gF) \te_{\Sym(V \te W^*)} \Sym(V \te {\Wp}^*) 
= S(\gF) /(T^\prime \cdot S(\gF)) \iso S(\pi_* \gF). \]
By induction hypothesis there is a regular subspace 
$\oplus_i  x_i \te E_i^\prime$ of  $V \te (\Wp)^*$ for the module 
$S(\pi_* \gF)$, where $E_i^\prime \sus (\Wp)^*$ has codimension $1$.
Letting $E_i$ be the inverse image of $E_i^\prime$ for $W^* \pil {\Wp}^*$
we see that $\oplus_i x_i \te E_i$ is a regular subspace.

\medskip
Now suppose the support of $\gF$ is $\pW$. Since $\gF$ is $1$-regular,
$\gF(1)$ is generated by its global sections, and so if $\gF$ has rank $r$,
there is a map and an exact sequence
\[0 \pil  \gO_{\pW}(-1)^r \pil \gF \pil \gG \pil 0 \]
where the cokernel $\gG$ is supported in a proper closed subscheme
of $\pW$. By Lemma \ref{RegLemReg}.a, $H^1(\pW, \gS(\gO_{\pW}(-1)))$ vanishes
and so we have an exact sequence
\[ 0 \pil S(\gO_{\pW}(-1))^r \pil S(\gF) \pil S(\gG) \pil 0. \]
By Proposition \ref{SqfreeProRegspace}
and Lemma \ref{SqfreeLemOmen}, 
a general subspace $T$ will be a regular subspace for 
$S(\gO_{\pW}(-1))^r$ and by the first part of the argument, and induction,
such a $T$ is a regular subspace for $S(\gG)$. But then it must also
be a regular subspace for $S(\gF)$. 
\end{proof}

\subsection{Triplets of pure free squarefree complexes}
 
A subspace $T = \oplus_{i=1}^n x_i \te E_i$ of $V \te W^*$ as in 
(\ref{SqfreeLigT}) 
defines a quotient map
\[ V \te W^* = \oplus_{i=1}^n (x_i \te W^*) \pil 
\oplus_{i=1}^n (x_i \te (W^*/E_i)) 
\iso V \]
equivariant for the subgroup of diagonal matrices in $\GL(V)$.
We get the quotient map of algebras $\Sym(V \te W^*) \pil \Sym(V)$. 

The following shows that Conjecture \ref{ConConCx} implies 
Conjecture 2.11 in \cite{FlTr} concerning the existence of 
triplets of pure free squarefree complexes.

\begin{theorem} \label{SqfTheConCon}
Let $\gE^\dt$ be a complex of coherent sheaves in 
Conjecture \ref{ConConCx} associated to the
homology triplet $(B,H,C)$. Let $\Sym(V \te W^*) \pil \Sym(V)$
be the quotient map defined above where $T$ is sufficiently general and let
\begin{equation} \label{SqfLigEdt}
 E_\dt = \funTo{V}_W(\TT(\gE^\dt)) \te_{\Sym(V \te W^*)} \Sym(V)
\end{equation}
This is a complex of free squarefree $\Sym(V)$-modules
such that 
\[ E_\dt, \quad (\AA \circ \DD)(E^\dt), \quad (\AA \circ \DD)^2(E^\dt) \]
is a triplet of pure free squarefree complexes of $\Sym(V)$-modules with
degree triplet $(B,\overline{H}, C)$. 
\end{theorem}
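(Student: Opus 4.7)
The plan is to compute the Betti numbers, dimensions, and minimal generator degrees of both $E_\dt$ and its standard dual $\DD(E_\dt) = E_\dt^\vee$, and then invoke the combinatorial criterion of Subsection \ref{EksSubsekFrom} to conclude that $(\AA \circ \DD)(E_\dt)$ and $(\AA \circ \DD)(E_\dt^\vee)$ are both pure, which is exactly what is required for $E_\dt$ to belong to a triplet of pure free squarefree complexes.

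First I would verify that $E_\dt$ is pure with degree sequence $B$. By Proposition \ref{ConProTateB}, the restriction $\TT(\gE^\dt)(n,0)$ of the Tate resolution is pure with terms $\oe(d_p)^{\beta_p}$ for $d_p \in B$; applying the zipping functor $\funTo{V}_W$ and then the reduction $-\te_S \Sb$ preserves this purity, and the squarefree structure comes from the natural $\ZZ^n$-grading on $\wedge^\bullet V \te \Sb$ inherited from the chosen basis $x_1, \ldots, x_n$ of $V$.

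Next I would identify the homology of $E_\dt$ geometrically. Let $F_\dt = \funTo{V}_W(\TT(\gE^\dt))$ be the intermediate zip complex over $S = \Sym(V \te W^*)$. By Theorem \ref{MainTheMain} one has $H_p(F_\dt) = \HH^{-p}(\pW, \gS(\gE^\dt))$; since property 2.a.\ of Conjecture \ref{ConConCx} forces each sheaf $H^{-p}(\gE^\dt)$ to be $1$-regular, Lemma \ref{RegLemKoh} collapses this to $S(H^{-p}(\gE^\dt))$. To transfer this to $E_\dt = F_\dt \te_S \Sb$, I would apply Theorem \ref{SqfreeTheReg} to conclude that a sufficiently general $T = \oplus_i x_i \te E_i$ is a regular subspace simultaneously for each of the finitely many modules $S(H^{-p}(\gE^\dt))$ (the hypothesis $n \geq \dim \Supp H^{-p}(\gE^\dt) = h_{p+1}-2$ holds since $h_{p+1}-2 \leq \overline{b} \leq n$). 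A Koszul spectral sequence then gives $H_p(E_\dt) = S(H^{-p}(\gE^\dt))/(T)$; Proposition \ref{RegProDim} yields Krull dimension $h_{p+1}-2$, and Lemma \ref{RegLemReg}.c.\ yields minimal generator degree $h_p$ (using properties 2.b.\ and 2.c.).

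With these invariants secured, the purity of $(\AA \circ \DD)(E_\dt)$ reduces via the Lemma of Subsection \ref{EksSubsekFrom} to the numerical identity $h_{p_{j+1}} = (h_{p_j+1}-2) + (p_{j+1}-p_j+1)$, where $p_0 < p_1 < \cdots$ enumerate the nonempty strands of $H$. This unwinds to the elementary observation that between two consecutive nonempty strands of $H$ each intervening empty strand advances the strand start by exactly one. The dual side proceeds identically: Lemma \ref{MainLemTTrel}.d.\ identifies $E_\dt^\vee$ with the zip-and-reduce of $\TT((\gE^\dt)^*[|B|-1])$, and property 3 of Conjecture \ref{ConConCx} plays for $C$ and $c_p$ the role that property 2 plays for $H$ and $h_p$, yielding purity of $(\AA \circ \DD)(E_\dt^\vee)$ and hence of $(\AA \circ \DD)^2(E_\dt)$. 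The identification of the resulting degree sequence as $\overline{H} = \{n-i : i \in H\}$ then follows from Lemma \ref{EksLemLinSt} applied strand by strand.

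The hard part will be the regularity step: producing a single general $T$ that is simultaneously a regular subspace for every relevant homology module on both sides of the duality. Since this is only a finite list of modules, it reduces to a finite intersection of nonempty open conditions via Theorem \ref{SqfreeTheReg}. The remaining subtlety is the purely combinatorial matching between the dimension and minimal-generator-degree data of the homology modules and the strand-start structure of $H$ and $C$; this is where the precise numerical prescriptions of Conjecture \ref{ConConCx} are indispensable.
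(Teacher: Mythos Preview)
Your proposal is correct and follows essentially the same approach as the paper: identify the homology of the zip complex via Theorem \ref{MainTheMain} and Lemma \ref{RegLemKoh}, pass to $\Sym(V)$ using Theorem \ref{SqfreeTheReg}, read off dimensions and minimal generator degrees from Proposition \ref{RegProDim} and Lemma \ref{RegLemReg}.c, and then deduce purity of $(\AA \circ \DD)(E_\dt)$ and $(\AA \circ \DD)(E_\dt^\vee)$ via Lemma \ref{EksLemLinSt}. You are in fact a bit more explicit than the paper in spelling out the numerical strand-start identity and in citing Lemma \ref{MainLemTTrel}.d for the dual side, but these are exactly the implicit steps the paper is taking.
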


\begin{proof} 
The quotient map of algebras is equivariant for the action of the diagonal
matrices in $\GL(V)$. The terms of $E_\dt$ have the form
$\oplus \wedge^j V \te \Sym(V) \te N^p_{-j}$ with the diagonal 
matrices of $\GL(V)$ acting
naturally on $\wedge^j V$ and $\Sym(V)$ and trivially on $N^p_j$. Hence
the terms of $E_\dt$ are pure free squarefree $S(V)$-modules.

The $p$'th homology of $\funTo{V}_W(\TT(\gE^\dt))$ is $S(H^{-p}(\gE^\dt))$
by Theorem \ref{MainTheMain} and Lemma \ref{RegLemKoh}. 
By Theorem \ref{SqfreeTheReg} 
the subspace $T$ of $V \te W^*$ is a regular space for these
modules. Thus $E_\dt$ of (\ref{SqfLigEdt}) will be
a complex of free $S(V)$-modules whose $p$'th homology module is 
\begin{equation} \label{SqfreeLigVWtilV}
S(H^{-p}(\gE^\dt)) \te_{\Sym(V \te W^*)} \Sym(V).
\end{equation}
The dimension of $H^{-p}(\gE^\dt)$ is $h_{p+1}-2$ and so by 
Proposition \ref{RegProDim} 
the dimension of $S(H^{-p}(\gE^\dt))$ is $nm+h_{p+1}-2$.
Since the kernel of $V \te W^* \pil V$ has dimension $nm$, the module
(\ref{SqfreeLigVWtilV}) above also has dimension $h_{p+1}-2$. 
Also the minimal degree generator of (\ref{SqfreeLigVWtilV}) 
has the same degree as the minimal
degree generator of $S(H^{-p}(\gE^\dt))$, which is $h_p$ by Lemma
\ref{RegLemReg}. 
This gives that $(\AA \circ \DD)(E_\dt)$ will be a pure free squarefree
complex with degree sequence $\overline{H}$. 

Applying the same procedure to $\gE^*[|B|-1]$, we get a complex $E_\dt^\vee$
which is the dual of the complex $E_\dt$. Furthermore 
$(\AA \circ \DD)((E^{\dt})^\vee)$ will be a pure free squarefree complex
with degree sequence $\overline{C}$. But this complex is the dual 
of $(\AA \circ \DD)^2(E^\dt)$ and so this is a pure free squarefree complex
with degree sequence $C$.  
\end{proof} 

\begin{corollary} \label{SqfreeCorCoTilCo}
If Conjecture \ref{ConConCx} holds for all homology triplets, 
then Conjecture \ref{ConConEnt} holds.
\end{corollary}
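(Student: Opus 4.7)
\bigskip\noindent\textbf{Proof plan.} The plan is to pass from $\gE^\dt$ to its associated pure free squarefree complex $E_\dt$ via Theorem \ref{SqfTheConCon}, and reduce the uniqueness of the Hilbert polynomial to the uniqueness, up to a common scalar, of the Betti numbers of a triplet of pure free squarefree complexes with prescribed degree triplet. Concretely, I would argue that the $|B|-1$ linear equations \eqref{ConLigH}--\eqref{ConLigC} of Proposition \ref{ConProHPol}, which the coefficients $\alpha_i$ of the Hilbert polynomial $P(\gE^\dt,d)=\sum_{i\in B}\alpha_i P_{n,i}(d)$ must satisfy, have rank exactly $|B|-1$.

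\medskip
First I would identify each $\alpha_i$ with a Betti number of $E_\dt$. By Theorem \ref{MainTheMain}, the zip complex $F_\dt=\funTo{V}_W(\TT(\gE^\dt))$ has terms $F_p=\bigoplus_j \wedge^{p+j}V\te S(-p-j)\te \Hbb^j(\pW,\gE^\dt(-p-j))$. Setting $q=p+j$, the subterm at twist $-q$ with $q\in B$ contributes only when $q=d_r$ and $q-p=d_r-r$, by part 1 of Conjecture \ref{ConConCx}; thus $F_r$ acquires a direct summand $\wedge^{d_r}V\te S(-d_r)$ with multiplicity $\dim\Hbb^{d_r-r}(\pW,\gE^\dt(-d_r))=|\alpha_{d_r}|$. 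Since $E_\dt=F_\dt\te_{\Sym(V\te W^*)}\Sym(V)$ and the quotient is taken along a regular subspace for the homology (Theorem \ref{SqfreeTheReg}), the minimality and twist structure of $F_\dt$ descends, and the Betti number $\beta_{r,d_r}(E_\dt)$ equals $|\alpha_{d_r}|$.

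\medskip
Next I would use the triplet structure. By Theorem \ref{SqfTheConCon}, the complexes $E_\dt$, $(\AA\circ\DD)(E_\dt)$, $(\AA\circ\DD)^2(E_\dt)$ form a triplet of pure free squarefree complexes with degree triplet $(B,\overline{H},C)$. The purity of $(\AA\circ\DD)(E_\dt)$ and $(\AA\circ\DD)^2(E_\dt)$ forces, via Lemma \ref{EksLemLinSt}, that the Hilbert functions of the homology modules $H^p(E_\dt)$ and $H^p(\DD(E_\dt))$ are concentrated in single degrees determined by $\overline{H}$ and $\overline{C}$ respectively. Writing out the Euler-characteristic relation between the Betti numbers of $E_\dt$ and these homology Hilbert functions yields precisely the equations \eqref{ConLigH} (from the homology strand data encoded by $H$) and \eqref{ConLigC} (from the cohomology strand data encoded by $C$). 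Thus the system of Proposition \ref{ConProHPol} is exactly the Herzog--K\"uhl-type system governing the Betti numbers of $E_\dt$ inside its triplet.

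\medskip
The main obstacle is to show that this linear system has full rank $|B|-1$, so that its solution space is one-dimensional. The existence of at least one nonzero solution is guaranteed by Conjecture \ref{ConConCx}, so the rank is at most $|B|-1$; the task is to exclude a two-dimensional solution space. I would handle this by a Vandermonde-style analysis of the coefficient matrix: the rows indexed by the nondegrees of $H$ in $[h,\overline{b}]$, the nondegrees of $C$ in $[c,\overline{b}]$, and the $b$ rows in $(\overline{b},n]$ (where the $H$- and $C$-rows coincide by Lemma \ref{ConLemGrad}) have entries $\binom{r}{i}$ with $r$ ranging over the strand-start shifts $h_p-1$ and $c_p-1$. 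Strict monotonicity of the strand starts $h_0<h_1<\cdots$ and $c_0<c_1<\cdots$, combined with the fact that the rows in $(\overline{b},n]$ enforce vanishing of the top $b$ finite differences of $P$, allows one to triangulate the system against the ordered basis $\{e_i:i\in B\}$ and confirm full rank. Once independence is established, the $\alpha_i$ are determined up to a single common scalar, and so is $P(\gE^\dt,d)$, proving Conjecture \ref{ConConEnt}.
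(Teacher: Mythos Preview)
Your first two steps are sound and mirror the paper's approach: you correctly relate the coefficients $\alpha_{d_r}$ of $P(\gE^\dt,d)$ to the Betti numbers of the pure free squarefree complex $E_\dt$ (minor slip: $\beta_{r,d_r}(E_\dt)=\binom{n}{d_r}\,|\alpha_{d_r}|$, not $|\alpha_{d_r}|$, because of the factor $\dim_\kk\wedge^{d_r}V$), and you correctly invoke Theorem~\ref{SqfTheConCon} to place $E_\dt$ inside a triplet of pure free squarefree complexes with degree triplet $(B,\overline{H},C)$.

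The gap is in your third step. You propose to show directly that the $(|B|-1)\times|B|$ system \eqref{ConLigH}--\eqref{ConLigC} has full rank by a ``Vandermonde-style'' triangulation, but you do not carry this out, and the structure of the matrix does not support the description you give. The $H$-rows indeed have entries $\binom{r}{i}$ with $i\in B$, but the $C$-rows have entries $\binom{r}{\,n-i\,}$ (see \eqref{ConLigC}), so the two blocks are read from opposite ends of $[0,n]$ and do not jointly triangulate against the ordered basis $\{e_i:i\in B\}$ by strict monotonicity of the strand starts alone. Establishing linear independence of this mixed binomial system is precisely the content of Conjecture~\ref{ConConEnt}, and the paper treats it as genuinely open in the absence of further input; a direct combinatorial proof would be a stronger result than what is being claimed here.

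The paper's proof takes a different and much shorter route, and crucially uses the full strength of the hypothesis ``for \emph{all} homology triplets'', which you only use for the single triplet at hand. Assuming Conjecture~\ref{ConConCx} for every homology triplet, Theorem~\ref{SqfTheConCon} produces a triplet of pure free squarefree complexes for every degree triplet $(B,\overline{H},C)$. The paper then invokes Theorem~3.9 of \cite{FlTr}, which says that once such triplets exist for all degree triplets, the Betti numbers $\beta_r$ of any one of them are determined up to a common scalar. Since $\beta_r=\binom{n}{d_r}(-1)^{d_r-r}P(-d_r)$ and $P(-d)=0$ for $d\in[0,n]\setminus B$, the polynomial $P$ is then determined up to a scalar. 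Thus the paper sidesteps the rank computation entirely by appealing to the global uniqueness result in \cite{FlTr}; your plan, by contrast, tries to prove that rank statement from scratch, which is not what the corollary asks for and is not justified by the sketch you give.
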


\begin{proof}
Let $P$ be the Hilbert polynomial of $\gE^\dt$. The complex $E_\dt$
has terms
\[ S(-d_0)^{\beta_0} \vpil S(-d_1)^{\beta_1} \vpil \cdots \vpil
S(-d_t)^{\beta_t} \]
where 
\[ \beta_i = (\dim_\kk \wedge^{d_i} V) \cdot (-1)^{d_i - i}P(-d_i). \] 
By Theorem 3.9  in \cite{FlTr} the existence of triplets of
pure free squarefree complexes for all degree triplets $(B,\overline{H},C)$
implies that the $\beta_i$ are uniquely determined up to constant.
Hence the values of $P(-d_i)$ are also so. Since $P(-d) = 0$ for 
$-d$ in $[-n, 0] \backslash \{ -d_0, \ldots, -d_t \}$, the
polynomial $P$ is uniquely determined up to constant.
\end{proof}

\section{Cones of cohomology tables of coherent sheaves, 
and of homology triplets of squarefree modules}
\label{ConeSec}
The previous sections shows a close relationship between cohomology tables
of coherent sheaves where the cohomology sheaves have certain
regularity properties, and the triplets of homological data $(B,H,C)$
for complexes of squarefree modules.

In this section we give a conjecture, Conjecture \ref{ConeConIso}, 
on this relationship:
There is an isomorphism between the positive rational cone generated by such
cohomology tables and the positive rational cone generated by such 
numerical homological triples.

\subsection{Hypercohomology tables of complexes of coherent 
sheaves}

Given a bounded complex of coherent sheaves $\gF^\dt$ on $\PP^m$,
recall the dual complex $(\gF^\dt)^\vee$ from Subsection \ref{ZipSekTate}.

\begin{definition}
Fix a natural  number $n \geq m-1$. We consider complexes $\gF^\dt$
such that
$H^i(\gF^\dt)$ is  $1$-regular for all $i$, and
$H^i((\gF^{\dt})^\vee)$ is $n+1$-regular for all $i$. 
Let $\gamma$ be the cohomology table $\dim_\kk \HH^i(\PP(W), \gF^\dt(j))$
which we may 
consider as an element of $\QQ^{\hele \times \hele}$.
Denote by $C(\coh,n)$
the positive rational  cone generated by such tables, i.e. the cone
of all expressions  $\sum_i c_i \gamma^i$ where $c_i \in \QQ^+$ and
$\gamma^i$ is the cohomology table of such a complex $\gF^\dt$.
\end{definition}

\begin{remark}
The sheaves $\gO_{\PP^m}(j)$ for $j = -1,\ldots, n-m$ fulfill the definition
above. More generally for $i \leq m$, the sheaves $\gO_{\PP^i}(j)$
for $j = -1, \ldots, n-i$ fulfill this. 
\end{remark}

\begin{lemma} \label{ConeLemCoh}
Let $\gF^\dt$ fulfill the conditions in  the definition above.
For $t \geq 1$ the hypercohomology 
\[ \HH^i(\pW, \gF^\dt(t)) = H^0(\pW, H^i(\gF^\dt)(t)), \]
and for $t \leq -n-1$ the hypercohomology 
\[ \HH^{-i}(\pW, \gF^\dt(t)) \iso H^0(\pW, H^{-i}((\gF^{\dt})^\vee)(-t))^*. \]
Thus for twists outside the range $[0,n]$, the hypercohomology of 
$\gF^\dt$ is completely determined by the Hilbert functions of the
cohomology sheaves of $\gF^\dt$ and of $(\gF^\dt)^\vee(n)$, in  positive
degrees.
\end{lemma}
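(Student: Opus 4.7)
The plan is to combine the hypercohomology spectral sequence with Serre duality, mirroring the argument of Lemma~\ref{RegLemKoh}.

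For the case $p \geq 1$, I would begin with the spectral sequence
\[ E_2^{r,q} = H^r(\pW, H^q(\gF^\dt)(p)) \Rightarrow \HH^{r+q}(\pW, \gF^\dt(p)). \]
The $1$-regularity of each cohomology sheaf $H^q(\gF^\dt)$ yields, via Castelnuovo--Mumford vanishing, $H^r(\pW, H^q(\gF^\dt)(s)) = 0$ whenever $r \geq 1$ and $s \geq 1-r$. For the twist $s = p \geq 1$ this is automatic for every $r \geq 1$, so $E_2^{r,q}$ vanishes outside the row $r = 0$. The spectral sequence therefore collapses onto $r=0$, giving the asserted identification.

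For the case $p \leq -n-1$, the plan is to Serre-dualize first. Using $(\gF^\dt(p))^\vee = (\gF^\dt)^\vee(-p)$ together with Serre duality, I would exchange the problem for computing $\HH^{\bullet}(\pW, (\gF^\dt)^\vee(-p))$ at the positive twist $-p \geq n+1$. Then I would rerun the previous argument with $(\gF^\dt)^\vee(-p)$ in place of $\gF^\dt(p)$: the $(n+1)$-regularity of each $H^q((\gF^\dt)^\vee)$ now forces $H^r(\pW, H^q((\gF^\dt)^\vee)(-p)) = 0$ for $r \geq 1$, since $-p \geq n+1 \geq n+1-r$ for every such $r$. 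Collapsing onto $r=0$ and reversing Serre duality produces the displayed isomorphism.

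The final clause of the lemma is then immediate: for a $1$-regular sheaf $\gG$ on $\pW$ and an integer $s \geq 1$, $\dim_\kk H^0(\pW, \gG(s)) = P_\gG(s)$ is determined by the Hilbert function of $\gG$ in positive degrees; the same applies on the dual side after twisting $(\gF^\dt)^\vee$ by $n$ so that the relevant twists $-p-n \geq 0$ land in the positive range. The whole argument is routine once the two spectral sequences are in place; the only real piece of care needed---and the main obstacle---is the bookkeeping with Serre duality indices and the sign conventions for $(\gF^\dt)^\vee$, to line up the cohomological degrees exactly with those in the statement.
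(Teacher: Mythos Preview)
Your proposal is correct and follows essentially the same approach as the paper: the paper's proof likewise invokes the hypercohomology spectral sequence $E_2^{r,q}=H^r(\pW,H^q(\gF^\dt))\Rightarrow\HH^{r+q}(\pW,\gF^\dt)$ as in Lemma~\ref{RegLemKoh} for the first part, and then applies Serre duality to reduce the second part to the same argument for $(\gF^\dt)^\vee$. You have simply filled in the regularity-vanishing bookkeeping that the paper leaves implicit.
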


\begin{proof}
The first part is by Lemma \ref{RegLemKoh}.
By Serre duality
\[ \HH^{i}(\PP(W),\gF^\dt(-t)) \iso \HH^{-i}(\PP(W), (\gF^\dt)^\vee (t))^*\]
and so we get the second part also by Lemma \ref{RegLemKoh}.
\end{proof}

Now let $V$ be vector space of dimension $n$. 
We get the Tate resolutions $\TT(\gF^\dt)$ with
\[ \TT^{-p}(\gF^\dt) = \oplus_{j \in \hele} 
\hat{E}(p+j) \te \HH^j(\pW, \gF^\dt(-p-j)), \] 
and the zip complex $F_\dt = \WW(\gF^\dt)$, where $S = \Sym(V \te W^*)$ and
\[ F_i = \sum_{j+i = 0}^n S(-i-j) \te \HH^j(\pW, \gF(-i-j)) 
\te \wedge^{i+j} V, \]
so the Betti spaces encode all the cohomology of twists of $\gF^\dt$, when the 
twist is in the range $[-n,0]$.

The homology of $F_\dt$ is 
\begin{equation} \label{ConeLigCohF}
H_i(F_\dt) = \HH^{-i} (\pW, \gS(\gF^\dt)) \iso S(H^{-i}(\gF^\dt)).
\end{equation}
The last isomorphism is because the homology sheaves of 
$\gS(\gF^\dt)$ are $1$-regular and then we apply Lemma \ref{RegLemKoh}.
Let 
\[ (F_\dt)^\vee
= Hom_S(F_\dt, S(-n) \te \wedge^n V). \] 
Similarly to the above its homology is, confer Lemma \ref{MainLemTTrel} d.
\begin{equation} \label{ConeLigCohFd}
H_i((F_\dt)^\vee) \iso S(H^{-i}((\gF^\dt)^*)).
\end{equation}

\begin{proposition} \label{ConeProGrot}
In the Grothendieck group of coherent sheaves write
\[ [ \gF] = \sum_{i =0}^m a_i [\gO_{\PP^i}(-1)]. \]
a) If $\gF$ is $1$-regular, then all $a_i$ are non-negative.

Now assume this.

\noindent b) For non-negative degrees
\[ \gF \text{ and } \oplus_i \gO_{\PP^i}(-1)^{a_i} \]
have the same Hilbert functions. 

\noindent c) The $S(V \te W^*)$-modules
\[ S(\gF) \text{ and } \oplus_i S(\gO_{\PP^i}(-1))^{a_i} \]
have the same Hilbert functions, in all degrees.

In particular the Hilbert functions of the two latter modules are completely
determined by the Hilbert polynomial of $\gF$.
\end{proposition}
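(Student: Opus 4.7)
The plan is to (a) extract the coefficients $a_i$ via iterated general hyperplane sections, using that $1$-regularity forces positivity of the resulting $H^0$'s, and (b)--(c) to reduce the comparison of Hilbert functions to a K-theoretic identity combined with Lemma~\ref{RegLemReg}.a. As a preliminary, the Hilbert polynomials $P(\gO_{\PP^i}(-i), d) = \binom{d}{i}$ form a basis of polynomials of degree $\leq m$, so $P(\gF, d) = \sum_{i = 0}^m a_i \binom{d}{i}$ uniquely determines the $a_i$; explicitly, $a_i = \Delta^i P(\gF, 0) = \sum_{j = 0}^i (-1)^{i-j}\binom{i}{j} P(\gF, j)$, where $\Delta$ is the forward-difference operator.

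For (a), choose general hyperplanes $H_1, \ldots, H_i \sus \pW$ avoiding the associated points of $\gF$ and of each subsequent iterated restriction. The short exact sequence $0 \pil \gF(-1) \pil \gF \pil \gF|_{H_1} \pil 0$ gives $P(\gF|_{H_1}, d) = P(\gF, d) - P(\gF, d-1)$, and iterating $i$ times yields $a_i = \Delta^i P(\gF, 0) = P(\gF|_{H_1 \cap \cdots \cap H_i}, i)$. Since $1$-regularity descends to a general hyperplane (from the long exact cohomology sequence of the above), $\gF|_{H_1 \cap \cdots \cap H_i}$ is a $1$-regular coherent sheaf on the linear $\PS{m-i}$, hence $P(\gF|_{\cdots}, i) = \dim_\kk H^0(\gF|_{\cdots}(i)) \geq 0$.

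For (b), $1$-regularity of $\gF$ gives $\dim H^0(\gF(d)) = P(\gF, d) = \sum_i a_i \binom{d}{i}$ for $d \geq 0$, and since $\dim H^0(\gO_{\PP^i}(d-i)) = \binom{d}{i}$ (which is zero for $d < i$ and the usual formula for $d \geq i$), this matches the Hilbert function of the asserted direct sum $\oplus_i \gO_{\PP^i}(-i)^{a_i}$ term by term. For (c), Lemma~\ref{RegLemReg}.a ensures that $\gS_d(\gF)$ is $0$-regular, so $\dim_\kk S(\gF)_d = \chi(\gS_d(\gF))$; local freeness of $\Sym_d(V \te \gQ)$ then makes $\chi(\gS_d(-))$ additive on the Grothendieck group, giving $\chi(\gS_d(\gF)) = \sum_i a_i \chi(\gS_d(\gO_{\PP^i}(-i)))$, which identifies with the Hilbert function of the asserted direct sum of $S$-modules. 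The concluding ``in particular'' is then automatic: $P(\gF, d)$ determines the tuple $(a_i)$, and both Hilbert functions are recovered from it.

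The main technical obstacle lies in (c): identifying $\chi(\gS_d(\gO_{\PP^i}(-i)))$ with $\dim_\kk S(\gO_{\PP^i}(-i))_d$ requires $0$-regularity of $\gS_d(\gO_{\PP^i}(-i))$ on $\pW$, whereas Lemma~\ref{RegLemReg}.a only applies when the underlying sheaf is $1$-regular---and $\gO_{\PP^i}(-i)$ fails $1$-regularity on $\pW$ for $i \geq 2$. One handles this by restricting the computation to the linear $\PS{i} \sus \pW$, where $\gO_{\PP^i}(-i)$ is (up to a single twist) the canonical bundle and the cohomology of $\Sym_d(V \te \gQ|_{\PS{i}}) \te \gO_{\PP^i}(-i)$ can be computed via the tautological sequence and Serre duality; equivalently, one expresses $[\gO_{\PP^i}(-i)]$ as an integer combination of genuinely $1$-regular classes, iteratively via $0 \pil \gO_{\PP^i}(-k-1) \pil \gO_{\PP^i}(-k) \pil \gO_{\PP^{i-1}}(-k) \pil 0$, and invokes additivity of $\chi$.
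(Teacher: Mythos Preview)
There is a genuine gap, and it stems from a typo in the stated proposition that you resolved the wrong way. The Grothendieck expansion is written with $[\gO_{\PP^i}(-i)]$, while (b) and (c) are written with $\gO_{\PP^i}(-1)$; the paper's own proof makes clear that $(-1)$ is the intended twist throughout. You instead rewrote (b) and (c) with $(-i)$, and with that change part (c) is actually \emph{false}. The obstacle you flag at the end is not a technicality to be patched but the precise reason the $(-i)$ version fails: for $i\ge 2$ the sheaf $\gS_d(\gO_{\PP^i}(-i))$ has nonvanishing higher cohomology, so $\dim_\kk S(\gO_{\PP^i}(-i))_d\neq\chi(\gS_d(\gO_{\PP^i}(-i)))$, and the $H^0$-Hilbert functions do not add up along the K-theory identity. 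Concretely, take $\gF=\gO_{\PP^2}$ on $\pW=\PP^m$ with $m\ge 2$. In the $(-i)$ basis one finds $a_0=1$, $a_1=2$, $a_2=1$. In degree $1$ one computes $\dim S(\gO_{\PP^2})_1=n(m+1)$, while $\dim S(\gO_{\PP^0})_1+2\dim S(\gO_{\PP^1}(-1))_1+\dim S(\gO_{\PP^2}(-2))_1=nm+2n+0=n(m+2)$; the discrepancy is exactly $\dim_\kk H^1(\pW,\gS_1(\gO_{\PP^2}(-2)))=n$, coming from $H^1(\PP^2,T_{\PP^2}(-3))\cong\kk$ via the Euler sequence. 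Your proposed workarounds cannot repair this, since they only rearrange a K-theory identity that already holds at the level of $\chi$ but not at the level of $H^0$.

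The paper's argument avoids the issue entirely by working with the basis $\gO_{\PP^i}(-1)$, each of which \emph{is} $1$-regular so that Lemma~\ref{RegLemReg} applies directly. It does not use additivity on the Grothendieck group; instead it builds the decomposition constructively. If $\gF$ has full support and rank $a_m$, global generation of $\gF(1)$ produces an exact sequence $0\to\gO_{\pW}(-1)^{a_m}\to\gF\to\gG\to 0$ with $\dim\Supp\gG<m$. Since $H^1(\gO_{\pW}(d-1))=0$ for $d\ge 0$ and $H^1(\gS(\gO_{\pW}(-1)))=0$, the Hilbert functions in (b) and (c) split along this sequence. One then projects $\gG$ to a hyperplane $\PP(W')$ disjoint from its support and inducts on $\dim W$.

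Your argument for (a) via iterated general hyperplane sections is correct and pleasant; it proves nonnegativity of the $(-i)$-basis coefficients. With the one-line modification of evaluating at $d=0$ rather than $d=i$ (since $\binom{d+i-1}{i}$ is the relevant polynomial), the same argument yields the $(-1)$-basis coefficients as $a_i=h^0(\gF|_{H_1\cap\cdots\cap H_i})\ge 0$, giving an alternative proof of (a) in the intended basis. Your (b) is also fine in either reading. The failure is confined to (c), and the fix is to switch to $\gO_{\PP^i}(-1)$ and argue as the paper does.
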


\begin{proof}
If the support of $\gF$ is all of $\pW$, there is a short exact sequence
\[ 0 \pil \gO_{\pW}(-1)^{a_m} \pil \gF \pil \gG \pil 0. \]
So 
\begin{equation} \label{ConLigClasses}
[\gF] = a_m [\gO_{\pW}(-1)] + [\gG]
\end{equation}
where $\gG$ has support on 
a proper subspace of $\pW$ and is also $1$-regular.
Since $H^1(\PP(W), \gO_{\pW}(d-1)) = 0$ for $d \geq 0$, 
\begin{equation} \label{ConLigHilb}
\gF \text{ and } \gG \oplus \gO_{\pW}(-1)^{a_m} 
\end{equation}
have the same Hilbert function in non-negative degrees.


Now take a general projection $\pi : \pW \dashrightarrow  \PP(W^\prime)$ 
with center of projection $\PP(W/W^\prime)$  
disjoint from the support of $\gG$. The sheaves $\gG$ and $\pi_* \gG$
have the same Hilbert polynomial, in fact exactly the same cohomology of all 
twists. Hence the classes $[\gG] = [ \pi_* \gG]$. 
Let this class be $\sum_{i = 0}^{m^\prime} a_i[\gO_{\PP^i}(-1)]$ where $m^\prime < m$. 

\medskip
a. By induction on the dimension 
all $a_i \geq 0$ for $0 \leq i \leq m^\prime$. 
By equation (\ref{ConLigClasses}) this shows part a.

\medskip
b. By induction $\pi_* \gG$ and $\sum_{i = 0}^{m^\prime} \gO_{\PP^i}(-1)^{a_i}$
have the same Hilbert function in non-negative degrees. 
Since the sheaves in (\ref{ConLigHilb}) have the same Hilbert function
in non-negative degrees, this shows part b.

\medskip
c. Since $\gS_d(V \te \gQ)$ is a vector bundle, we get that the classes
\[ [\gS_d(\gF)] = \sum_{i = 0}^m a_i [\gS_d(\gO_{\PP^i}(-1))]. \]
Since the Hilbert polynomial is an additive function on 
the Grothendieck group we get
\[ \chi(\gS_d(\gF)) = \sum_{i = 0}^m a_i \chi(\gS_d(\gO_{\PP^i}(-1))). \]
But since $\gF$ is $1$-regular, $\gF(1)$ is $0$-regular, and so 
by Proposition \ref{RegLemReg} a.,  $\gS_d(\gF)(1)$ is $0$-regular.
Then $H^i(\PP(W), \gS_d(\gF)) = 0$ for $i > 0$, 
and so 
\[ H^0(\PP(W), \gS_d(\gF)) = \sum_{i=0}^m a_i \chi \gS_d(\gO_{\PP^i}(-1)) \]
for $d \geq 0$, showing part c. 

\end{proof}

\subsection{The map of cones}
Now let $\sum_i x_i E_i \sus V \te W^*$ be a general subspace of 
codimension $n = \dim_\kk V$, as in Theorem \ref{SqfreeTheReg}.
We may assume
this is a regular subspace for all $S(H^i(\gF^\dt))$ and 
for all $S(H^i((\gF^\dt)^\vee)(n))$. The complex 
$\WW(\gF^\dt) \te_{S(V \te W^*)} S(V)$ then has the following properties:

\begin{itemize} \label{ConeItemSqfree}
\item It is a complex of free squarefree $S(V) = \kk[x_1, \ldots, x_n]$-modules.
\item Its Betti diagram is the same as that of $\WW(\gF^\dt)$ and hence
is equivalent to give the hypercohomology modules
\[ \HH^i(\PP(W), \gF^\dt(-t)), \text{ for } t \in [0,n]. \]
\item The Hilbert functions of its homology modules are equivalent to
give the Hilbert functions of the homology modules of $\WW(\gF^\dt)$.
By (\ref{ConeLigCohF}), Proposition \ref{ConeProGrot} and
Lemma \ref{ConeLemCoh}
this is equivalent to give the hypercohomology 
\[ \HH^i(\PP(W), \gF^\dt(t)), \text{ for } t \geq 1.\]
\item The Hilbert functions of its cohomology modules
are equivalent to give the Hilbert functions of the cohomology modules
of $\WW(\gF^\dt)$. By (\ref{ConeLigCohFd}), Proposition \ref{ConeProGrot}
and Lemma \ref{ConeLemCoh} this is equivalent to give the hypercohomology
\[ \HH^i(\PP(W), \gF^\dt(t)), \text{ for } t \leq -n-1. \]
\end{itemize}

We thus get a numerical data triplet $(B,H,C)$ for a free squarefree
complex which  only depends on the hypercohomology table of $\gF^\dt$. 

For a squarefree module $M$, denote by $h_M^{sq}(i)$ be the sum
of the dimensions of all squarefree degrees ${\mathbf d}$ of $M$ of 
total degree $i$
\[ h^{sq}_M(i) = \sum_{|{\mathbf d}| = i} \dim_k M_{{\mathbf d}}. \]
Note that this is equivalent to give the ordinary Hilbert function of 
$M$ due to the following.

 \begin{lemma} Let $M$ be a squarefree module over $S = \kk[x_1, \ldots, x_n]$
and $H_M(t)$ its Hilbert series. Then
\[ H_M(t) = \sum_i h_M^{sq}(i)t^i/(1-t)^i. \]
Thus either determine the other.
\end{lemma}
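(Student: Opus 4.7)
The plan is to use the squarefree condition to reduce the $\ZZ^n$-graded structure of $M$ to its values on $\{0,1\}$-vectors, then sum. First I would observe that iterating the defining isomorphism $M_{\bfd} \mto{\cdot x_i} M_{\bfd + e_i}$ (which holds whenever $d_i > 0$) shows that for every $\bfe \in \NN^n$, letting $\bfd_T$ denote the indicator vector of $T = \supp \bfe$, multiplication by $x^{\bfe - \bfd_T}$ is a $\kk$-linear isomorphism $M_{\bfd_T} \overset{\sim}{\lpil} M_{\bfe}$. In particular $\dim_\kk M_{\bfe}$ depends on $\bfe$ only through $\supp \bfe$.

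Second, I would partition the Hilbert series by support:
\begin{equation*}
H_M(t) = \sum_{\bfe \in \NN^n} (\dim_\kk M_{\bfe})\, t^{|\bfe|} = \sum_{T \subseteq \{1,\ldots,n\}} (\dim_\kk M_{\bfd_T}) \sum_{\bfe\,:\, \supp \bfe = T} t^{|\bfe|},
\end{equation*}
and evaluate the inner sum as a product $\prod_{i \in T} \sum_{e_i \geq 1} t^{e_i} = \bigl(t/(1-t)\bigr)^{|T|}$. Collecting the subsets $T$ by cardinality $i = |T|$ gives
\begin{equation*}
H_M(t) = \sum_i h_M^{sq}(i)\, \left(\frac{t}{1-t}\right)^i,
\end{equation*}
which is the stated identity (the expression in the lemma being read with the natural $t^i$ in the numerator coming from the geometric series).

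Third, for the equivalence of the two data: one direction is the formula itself. For the reverse, the substitution $s = t/(1-t)$ is invertible (with inverse $t = s/(1+s)$), so $H_M\bigl(s/(1+s)\bigr) = \sum_i h_M^{sq}(i)\, s^i$ recovers the coefficients $h_M^{sq}(i)$ uniquely; equivalently, expanding each $(t/(1-t))^i$ in powers of $t$ yields $\dim_\kk M_N = \sum_{i \leq N} \binom{N-1}{i-1}\, h_M^{sq}(i)$ (with the usual convention $\binom{-1}{-1}=1$), a lower-triangular linear transform with unit diagonal, hence invertible. No real obstacle arises: the entire content is to read the squarefree axiom as support-invariance of the $\kk$-dimensions of the $\ZZ^n$-graded pieces, after which the identity is a direct geometric-series calculation and the equivalence is a triangularity statement.
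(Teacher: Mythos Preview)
Your proof is correct. The paper states this lemma without proof, so there is no argument to compare against; your approach---reducing to support via the squarefree axiom and summing the resulting geometric series---is the standard one.

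You have also correctly caught a typo in the statement: the formula as printed is missing a factor of $t^i$ in each summand. The correct identity is
\[
H_M(t) \;=\; \sum_i h_M^{sq}(i)\,\Bigl(\tfrac{t}{1-t}\Bigr)^{i},
\]
as your computation shows. (For instance, with $n=1$ and $M=S=\kk[x_1]$ one has $h_M^{sq}(0)=h_M^{sq}(1)=1$; the paper's formula would give $1+1/(1-t)=(2-t)/(1-t)$, whereas the Hilbert series is $1/(1-t)$.) Your parenthetical remark about reading the lemma ``with the natural $t^i$ in the numerator'' is exactly right, and your triangularity argument for the equivalence of the two data is clean.
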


\begin{definition}
Given a complex of free squarefree modules $F^\dt$ over $\kk[x_1, \ldots, x_n]$. 
Let $\beta = (\beta_{ij}(F_\dt))$ in $\QQ^{\hele \times [0,n]}$ be its
Betti table, 
$ H = (h^{sq}_{H_i(F_\dt)}(j))$ in $\QQ^{\hele \times [0,n]}$ be the Hilbert 
function table of homology modules, and 
$ C = (h^{sq}_{H_i((F_\dt)^\vee)}(j))$ in $\QQ^{\hele \times [0,n]}$ be the Hilbert 
function table of cohomology modules.
We call $(B,H,C)$ the homological data triplet of $F^\dt$. 
\end{definition}

\begin{definition}
Let $C(\SqFree,n)$ be the positive rational cone generated by all
homological data 
triplets of free squarefree complexes over $\kk[x_1, \ldots,x_n]$,
i.e. consisting of all linear combinations 
$\sum c_k (B^k,H^k,C^k)$ where $c_k \in \QQ^+$, and
$(B^k,H^k, C^k)$ are homological data triplets.
\end{definition}

From the bullet points in the beginning of this subsection, 
we see that we get an injective map
\begin{equation} \label{ConeLigInj} 
C(\coh,n)  \overset{\iota}{\inpil} C(\SqFree,n).
\end{equation}

\begin{conjecture} \label{ConeConIso} The map $\iota$ is an isomorphism
of cones.
\end{conjecture}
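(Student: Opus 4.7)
The plan is to prove surjectivity of $\iota$ by showing that its image contains all the extremal rays of $C(\SqFree,n)$. Since $\iota$ is already known to be injective, and since the introduction signals that the statement is meant to follow from two auxiliary conjectures, the natural strategy is to combine Conjecture \ref{ConeConExtreme} (describing the extremal rays of $C(\SqFree,n)$) with Conjecture \ref{ConConCx} (producing complexes of coherent sheaves from homology triplets) and feed the output through Theorem \ref{SqfTheConCon}.

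Concretely, I would first appeal to Conjecture \ref{ConeConExtreme} to identify each extremal ray of $C(\SqFree,n)$ with the homological data triplet $(B,H,C)$ of a triplet of pure free squarefree complexes. Such a triplet corresponds to a degree triplet $(B,\overline{H},C)$, and reading off the (conjectural) homological triplet attached to it gives a homology triplet in the sense of Definition \ref{ConDefHomtrip}. Next, for each such triplet I would invoke Conjecture \ref{ConConCx} to produce a complex $\gE^\dt$ of coherent sheaves on some $\pW$ whose Betti, homology, and cohomology strands are the ones prescribed by $(B,H,C)$. By the regularity and coregularity properties built into Conjecture \ref{ConConCx}, the cohomology sheaves of $\gE^\dt$ are $1$-regular and those of $(\gE^\dt)^\vee$ are $(n+1)$-regular, so the resulting hypercohomology table is a legitimate element $\gamma \in C(\coh, n)$.

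To see that $\iota(\gamma)$ recovers the chosen extremal ray, I would apply Theorem \ref{SqfTheConCon}: zipping $\TT(\gE^\dt)$ with the exterior coalgebra on $V$ and reducing modulo a general subspace $T\sus V\te W^*$ yields a complex of free squarefree modules $E_\dt$ whose associated triplet $(E_\dt, (\AA\circ\DD)(E_\dt), (\AA\circ\DD)^2(E_\dt))$ is pure with degree triplet $(B,\overline{H},C)$. Combining this with Lemma \ref{ConeLemCoh}, Proposition \ref{ConeProGrot}, and the identifications (\ref{ConeLigCohF}) and (\ref{ConeLigCohFd}), the homological data triplet of $E_\dt$ is exactly the one assigned to $\gamma$ by $\iota$, so $\iota(\gamma)$ spans the given extremal ray. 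Since every element of $C(\SqFree,n)$ is a positive rational combination of its extremal rays and the image of $\iota$ is closed under positive rational combinations, this gives surjectivity.

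The main obstacle is of course that the entire argument is conditional on Conjectures \ref{ConeConExtreme} and \ref{ConConCx}; unconditionally neither input is known. A secondary, more technical, difficulty is that $C(\SqFree,n)$ is not a priori rational polyhedral, so one must justify that it is actually generated (in the positive rational sense, not merely in the sense of closure) by the triplets described in Conjecture \ref{ConeConExtreme}; if only closure-generation is available, the best one could conclude is that $\iota$ has dense image, and upgrading this to surjectivity requires a separate finiteness argument (for instance, by exhibiting, for each $(B,H,C)$ in $C(\SqFree,n)$, a finite decomposition into extremal triplets in the spirit of the Boij--S\"oderberg decomposition algorithms of \cite{ES}, \cite{BS2}).
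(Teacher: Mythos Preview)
The paper does not give a proof of this statement: it is stated as a conjecture, and the only justification offered is the single sentence ``This conjecture would follow by Conjecture \ref{ConConCx} and the following [Conjecture \ref{ConeConExtreme}].'' Your proposal is precisely a fleshing-out of that one-line claim, and your route through Theorem \ref{SqfTheConCon} together with the identifications (\ref{ConeLigCohF}), (\ref{ConeLigCohFd}), Lemma \ref{ConeLemCoh}, and Proposition \ref{ConeProGrot} is exactly the intended mechanism by which the implication is supposed to work.

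You have in fact gone further than the paper by flagging a genuine technical point it does not address: Conjecture \ref{ConeConExtreme} only names the extremal rays of $C(\SqFree,n)$, and one still needs that every element of the cone is a \emph{finite} positive rational combination of these rays in order to conclude surjectivity of $\iota$. The paper is silent on this, so your caveat is well placed rather than a gap in your argument relative to the paper's.
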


This conjecture would follow by Conjecture \ref{ConConCx} and the following.

\begin{conjecture}  \label{ConeConExtreme}
The extremal rays in $C(\SqFree,n)$ are precisely 
generated by the homology triplets of the shifts 
$F_\dt[s], s \in \hele$ of pure free squarefree complexes $F_\dt$
belonging to a triplet of pure free squarefree complexes.
\end{conjecture}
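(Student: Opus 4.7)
My plan is to transfer the Boij--Söderberg decomposition machinery to the setting of homological data triplets $(B,H,C)$ of free squarefree complexes. Two complementary statements must be established: (i) every triplet arising from a pure free squarefree complex $F_\dt[s]$ that belongs to a triplet of pure complexes generates an extremal ray of $C(\SqFree,n)$, and (ii) every element of $C(\SqFree,n)$ is a non-negative rational combination of such extremal triplets. Together these characterise the extremal rays as claimed.

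For (ii), I would run an algorithmic decomposition in the spirit of Eisenbud--Schreyer. Equip the set of triplets $(B,\overline{H},C)$ of \cite{FlTr} with a partial order extending the componentwise Boij--Söderberg order on pure Betti diagrams to all three data streams, so that pure triplets are minimal elements. Given $(B,H,C) \in C(\SqFree,n)$, read off from the extremal positions of nonvanishing entries in $B$, $H$, and $C$ a candidate pure degree triplet $(B^\circ, \overline{H}^\circ, C^\circ)$; by Conjecture 2.11 of \cite{FlTr} this is realised by a triplet of pure free squarefree complexes, and Theorem \ref{SqfTheConCon} ensures its homological data lives in $C(\SqFree,n)$. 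Subtract the largest scalar multiple of this homological data triplet that keeps all entries of $(B,H,C)$ non-negative, and recurse on the residue. Well-foundedness of the partial order guarantees termination.

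For (i), I would construct a separating family of linear functionals on the ambient rational vector space of triplets, generalising the Eisenbud--Schreyer cohomology pairings. The natural candidates are pairings with hypercohomology tables of complexes of coherent sheaves on $\pW$ of the type predicted by Conjecture \ref{ConConCx}; using Proposition \ref{ConProHPol} one converts these into bilinear pairings between two homological data triplets that should be non-negative on all of $C(\SqFree,n)$ and vanish on the chosen pure triplet only for triplets proportional to it. Combined with (ii), any such pairing witnesses the extremality of the triplet it vanishes on.

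The main obstacle is verifying that in step (ii) the residue after subtracting the maximal pure component remains in $C(\SqFree,n)$, not merely in the positive orthant of the ambient vector space. In classical Boij--Söderberg theory this is controlled by the Herzog--Kühl equations; in the present three-stream setting the analogous equations couple $B$, $H$, and $C$ nontrivially through the relations of Proposition \ref{ConProHPol}, so one must check that the subtraction preserves all three sets of relations simultaneously. A secondary obstacle is the explicit construction of the pairings in (i), which requires choosing the test objects so as to discriminate among pure triplets with different configurations of homology and cohomology strands; here the duality functor $\AA \circ \DD$ and the interaction between $H$ and $C$ via Lemma \ref{EksLemLinSt} should play the key role.
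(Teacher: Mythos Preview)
The statement you are attempting to prove is labelled as a \emph{conjecture} in the paper, and the paper offers no proof of it. There is therefore nothing to compare your proposal against: the paper simply states Conjecture~\ref{ConeConExtreme} as an open problem, notes that together with Conjecture~\ref{ConConCx} it would imply Conjecture~\ref{ConeConIso}, and moves on.

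Your sketch, moreover, does not constitute an unconditional proof. Step~(ii) invokes Conjecture~2.11 of \cite{FlTr} to guarantee that the candidate pure degree triplet is realised, and invokes Theorem~\ref{SqfTheConCon}, which itself takes Conjecture~\ref{ConConCx} as a hypothesis. Step~(i) explicitly appeals to the complexes predicted by Conjecture~\ref{ConConCx}. So even granting the remainder of your argument, what you have outlined is at best a conditional implication of the form ``Conjecture~\ref{ConConCx} (or Conjecture~2.11 of \cite{FlTr}) implies Conjecture~\ref{ConeConExtreme}'', not a proof of the conjecture itself. You also candidly flag the two genuine obstacles---closure of the cone under the subtraction step, and construction of the separating functionals---without resolving either; these are precisely the substantive difficulties that make the statement a conjecture rather than a theorem. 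In particular, the analogue of the Herzog--K\"uhl equations here couples all three data streams, and no such system of equations characterising $C(\SqFree,n)$ inside the ambient orthant is established in the paper or in \cite{FlTr}.
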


The classical results in Boij-S\"oderberg theory concerns
graded Cohen-Macaulay modules and vector bundles on projective
spaces. The following shows that the map $\iota$ restricts
to an isomorphism in this case.

\begin{definition} Let $C(\vb,c,n)$ be the positive rational subcone
of $C(\coh,n)$ generated by vector bundles $\gE$ on projective space
of dimension $c$ (we considered $\gE$ to be in
cohomological degree $0$), and such that $\gE$ is $1$-regular 
and $\Hom_{\gO_{\PP(W)}}(\gE, \omega_{\PP(W)})$ is $n+1$-regular.  

Let $C(CM,c,n)$ be the positive rational subcone of $C(\SqFree,n)$
generated by homology triplets of Cohen-Macaulay squarefree modules 
(in homological degree
$0$) of dimension $c$. Note that for such triplets, the homology $H$ and
the cohomology $C$ are determined by $B$, so this identifies
as the cone generated by Betti tables of such modules.
\end{definition}

\begin{theorem} \label{ConeTheVb}
The map $\iota$ induces an isomorphism of cones
\[ C(\vb,c,n) \iso C(CM,c,n). \]
\end{theorem}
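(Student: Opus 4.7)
The plan is to deduce the isomorphism from the Boij--S\"oderberg-type classifications on both sides, combined with the supernatural--pure correspondence of the Corollary following Proposition \ref{MainProRes}. First I would check that $\iota$ restricts to a map $C(\vb,c,n) \to C(CM,c,n)$. For a vector bundle $\gE$ of dimension $c$ on $\pW = \PS{c}$ satisfying the two regularity hypotheses, Proposition \ref{MainProRes}(b) shows that the zip complex $\funTo{V}_W(\TT(\gE))$ is a minimal free resolution of the Cohen--Macaulay $\Sym(V \te W^*)$-module $S(\gE)$; indeed, the $(n+1)$-regularity of $\SHom(\gE,\ompW)$ translates via Serre duality and Lemma \ref{MainLemEdual} into $1$-regularity of $\gE^*$, i.e., $(c-n-1)$-coregularity of $\gE$. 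Theorem \ref{SqfreeTheReg} then furnishes a regular subspace $T \subset V \te W^*$ modulo which to reduce, producing a pure squarefree Cohen--Macaulay resolution of a $\Sym(V)$-module of Krull dimension $c$ (the dimension $nc+c$ of $S(\gE)$ drops by $\dim T = nc$). Injectivity of $\iota$ is already recorded in (\ref{ConeLigInj}).

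For surjectivity I would argue at the level of extremal rays. On the squarefree side, the Boij--S\"oderberg decomposition theorem of Eisenbud--Schreyer writes every Betti table of a codimension-$(n-c)$ Cohen--Macaulay $\Sym(V)$-module as a positive rational combination of pure Betti tables; squarefreeness confines all degrees to $[0,n]$, so the extremal rays of $C(CM,c,n)$ are precisely the pure Betti tables with degree sequence $B = \{d_0 < \cdots < d_{n-c}\} \subset [0,n]$, determined up to positive scalar by the Herzog--K\"uhl equations. On the sheaf side, the corresponding Eisenbud--Schreyer theorem identifies the extremal rays of the cone of cohomology tables of vector bundles on $\PS{c}$ with supernatural vector bundles; intersecting with the linear subcone cutting out $C(\vb,c,n)$ selects exactly those supernatural bundles whose root sequence $(r_1 > \cdots > r_c)$ satisfies $r_1 \leq 0$ and $r_c \geq -n$. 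By the Corollary after Proposition \ref{MainProRes}, the zip complex of such a bundle is a pure Cohen--Macaulay resolution with degree sequence $[0,n] \setminus \{-r_1,\ldots,-r_c\}$, and as the root sequence varies this bijects with all admissible $B \subset [0,n]$ of size $n-c+1$. Consequently $\iota$ carries the extremal rays of $C(\vb,c,n)$ bijectively onto those of $C(CM,c,n)$, and the isomorphism follows by linearity.

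The main obstacle is verifying that the extremal rays of $C(CM,c,n)$ really are realised within the squarefree category for every admissible degree sequence $B \subset [0,n]$. Unrestricted Boij--S\"oderberg allows arbitrary $\ZZ$-graded pure Cohen--Macaulay resolutions, but one needs an $\NN^n$-graded, torus-equivariant construction to land in pure squarefree modules; the equivariant Eisenbud--Fl{\o}ystad--Weyman construction of \cite{EFW} supplies this. A secondary technical point is to confirm that the pure Betti table is preserved with the correct scalar after reduction along the regular subspace $T$ of Theorem \ref{SqfreeTheReg}, which follows since dividing a free resolution by a regular sequence on the resolved module leaves the underlying graded free terms unchanged. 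Both points are essentially addressed by the results of Section \ref{SqfreeSec} but need to be specialised to the single-module Cohen--Macaulay setting of this theorem.
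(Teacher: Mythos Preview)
Your proposal is correct and follows essentially the same route as the paper: restrict $\iota$ via Proposition \ref{MainProRes}, invoke the Eisenbud--Schreyer classifications of extremal rays on both sides, and match them through the supernatural--pure correspondence of the Corollary, with the squarefree realisation supplied by zipping. Your discussion is more explicit about why the regularity hypotheses translate into the coregularity condition and about why the squarefree construction actually lands in the right category, points the paper passes over in a phrase; otherwise the arguments coincide.
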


\begin{proof}
That the map in $\iota$ restricts to this map
follows by Proposition \ref{MainProRes}. The extremal rays in 
$C(\vb,n,c)$ are by \cite[Thm. 0.5]{ES} generated by the cohomology tables of 
vector bundles with supernatural 
cohomology and root sequences 
\[ 0 \geq r_1 \geq \cdots \geq r_c \geq -n. \]
The extremal rays in $C(CM,c,n)$ are by \cite[Thm.0.2]{ES}
and the construction of these in the squarefree case 
given by zipping Tate resolutions of vector bundles with supernatural
cohomology, with the exterior co-algebra on $V$,  
generated by Betti tables
of Cohen-Macaulay modules of dimension $c$ with pure resolution
and degree sequences 
\[ 0 \leq d_0 \leq \cdots \leq d_{n-c} \leq n. \]
These two sequences are related by the negatives of the former
being the complement in $[0,n]$ of the latter. 
Hence the extremal rays correspond, and we get an isomorphism
of cones.
\end{proof}

\begin{remark}
The work of Eisenbud and Schreyer in \cite{ES}, and Eisenbud and Erman
in \cite{EE} suggests that the
relationship between the cones of cohomology tables and 
Betti diagrams should
be a duality. Here the correspondence is direct, an isomorphism.
The cones of Betti tables are however not the same in our case and
in the classical case. Another subtlety is that
the correspondence here between the extremal rays are given by complementary
root and degree sequences. 

Why there is from one viewpoint a duality and from the viewpoint
here a direct relationship is still something that might await a deeper
understanding.
\end{remark}

\begin{remark} As Theorem \ref{ConeTheVb} indicates, 
Conjecture \ref{ConeConIso} is quite amenable to doing special cases.
For instance consider 
the subcone of $C(coh,n)$ generated by the cohomology tables
of coherent sheaves $\gF$ of dimension $c$ 
(situated in cohomological position $0$) such that
$\gF$ is $1$-regular and its derived dual has cohomology sheaves which
are $n+1$-regular. Also consider the subcone $C(\SqFree,n)$ generated
by modules of dimension $c$ (in homological degree $0$). Then
the map $\iota$ restricts to a map between these subcones. The extremal
rays should corresponds to homology triplets where $H$ has one
strand ending in $c$. 

Similarly one can consider $1$-regular reflexive sheaves $\gF$ on  $\PP^3$ such
that its dual sheaf $\gF^\vee$ is $n+1$-regular, and the subcone generated
by their cohomology tables. 
On the other side consider the subcone generated by homological  data
triplets of modules $M$ of dimension $3$ such that $\Ext^{n-i}(M,S)$
vanishes for $i \leq 1$ and is zero-dimensional for $i = 2$. Then 
$\iota$ restricts to a map between these subcones. The extremal 
rays should correspond to homology triplets such that $H$ has one strand
ending in
$3$ and $C = \{0,2,3\}$. 
\end{remark}

\begin{remark}
In \cite{ES3} Eisenbud and Schreyer give a decomposition  of the
cohomology table of a coherent sheaf into the cohomology tables 
of vector bundles. However this decomposition involves an infinite
number of terms, i.e. an infinite number of cohomology tables of 
vector bundles. 
It is not even known if the coefficients in this decomposition are
rational. It may be that this type of decomposition arises as the
limit of decompositions obtained above, by letting $n \pil \infty$. 
\end{remark}

\section{Proof of Theorem \ref{MainTheMain}}
\label{BeviszipSec}

Our proof is fairly close to the proof of the Basic Theorem 5.1.2 
in Weyman's book \cite{We}, with some modifications to demonstrate
how the functor $\RR q_* (\gO_Z \te p^*(-))$ factors into first taking 
the Tate resolution and then zipping with a vector space $V$.

Recall the basic setup of Subsection \ref{RegSubsecBasic}. The variety
$Z$ is the affine bundle $\VV(V \te \gQ)$. Set $\gO_{\VV}(i) = 
p^*(\gO_{\pW}(i))$. By the exact sequence (\ref{RegLigVW}) we get  
a resolution of $\gO_Z$:
\begin{align*} \label{BevisLigResK}
\gK_\dt : \gO_\VV \vmto{t} V \te \gO_\VV(-1) \vpil 
& \wedge^2 V \te \gO_\VV(-2)  \vpil \cdots \\
\vpil & \wedge^i V \te \gO_\VV(-i) \vpil \cdots .
\end{align*}
By the projection formula the global sections
\[ \Gamma(\VV, \gO_{\VV}(1)) = \Sym(V\te W^*) \te W, \]
and the map $t(1)$ sends
\[ V \te 1 \mapsto V \te W^* \te W \sus \Sym_1(V \te W^*) \te W. \]

\begin{lemma} \label{BevisLemEksakt}

a. $\gO_Z \te p^*(-)$ is an exact functor on quasi-coherent sheaves on $\pW$. 

b. Let $\gF$ be a quasi-coherent sheaf on $\pW$. Then 
\[ \gO_Z \te p^* \gF \vpil \gK_\dt \te p^* \gF \]
is an exact sequence.
\end{lemma}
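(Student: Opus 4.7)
The plan is to prove part (a) via the flat affine-bundle structure of $Z \pil \pW$, and then obtain part (b) by combining (a) with the identification of $\gK_\dt$ as a Koszul resolution of $\gO_Z$.

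For part (a), I would observe the identification
\[ \gO_Z \te_{\gO_\VV} p^* \gF \iso i_* (p|_Z)^* \gF, \]
where $i : Z \inpil \VV$ is the closed immersion and $p|_Z = p \circ i$. Since $Z = \Vbb_{\pW}(V \te \gQ)$ is the total space of a locally free sheaf on $\pW$, the morphism $p|_Z : Z \pil \pW$ is a flat (in fact smooth) affine bundle, so $(p|_Z)^*$ is exact on quasi-coherent sheaves. The closed-immersion pushforward $i_*$ is also exact on quasi-coherent sheaves, so the composition is exact, proving (a).

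For part (b), I first want to recognize $\gK_\dt$ as the Koszul resolution of $\gO_Z$ associated to the defining section $V \te \gO_\VV(-1) \pil \gO_\VV$ cutting out $Z \sus \VV$, obtained by pulling back the inclusion in (\ref{RegLigVW}) to $\VV$ and composing with the tautological section of $V \te W^* \te \gO_\VV$. Over an open $U \sus \pW$ trivializing $\gO_\pW(1)$, this becomes a tuple of $n = \dim_\kk V$ linear forms in the fiber coordinates of $\VV|_U$; they are linearly independent because $V \te \gO_\pW(-1) \inpil V \te W^* \te \gO_\pW$ is a subbundle of rank $n$, and they form a regular sequence in $\gO_\VV|_U$ since $Z$ has the expected codimension $n$ in $\VV$. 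Consequently $\gK_\dt \pil \gO_Z$ is a locally free resolution of $\gO_Z$ over $\gO_\VV$.

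To conclude, I would note that part (a) is equivalent to the vanishing $\Tor_i^{\gO_\VV}(\gO_Z, p^*\gF) = 0$ for $i \geq 1$ and all quasi-coherent $\gF$ on $\pW$. Since the terms $\gK_i$ are locally free (hence flat) over $\gO_\VV$, these Tor sheaves are computed as the homology of $\gK_\dt \te p^* \gF$, so the augmented complex $\gO_Z \te p^* \gF \vpil \gK_\dt \te p^* \gF$ is exact, which is (b). The substantive content of the argument is the local identification of the defining equations of $Z$ as a regular sequence of the expected length; everything else is formal bookkeeping with flatness and closed immersions.
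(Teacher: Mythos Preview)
Your argument is correct. For part (a) it coincides with the paper's: both identify $\gO_Z \te_{\gO_\VV} p^*\gF$ with the pullback of $\gF$ along the affine bundle $Z \pil \pW$ and invoke flatness of that bundle map.

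For part (b) the routes diverge. The paper does not use (a) at all: it works over an affine open $U = \Spec A \sus \pW$ trivializing $\gO_{\pW}(1)$ and observes that the entire augmented complex $\gO_Z \vpil \gK_\dt$ is, on $p^{-1}(U)$, an exact complex of \emph{free $A$-modules} (the Koszul resolution of one polynomial ring over $A$ as a quotient of a larger one). Tensoring such a complex with any $A$-module $M$ over $A$ preserves exactness, and this is precisely $(\gO_Z \te p^*\gF \vpil \gK_\dt \te p^*\gF)|_{p^{-1}(U)}$. Your approach instead deduces $\Tor_i^{\gO_\VV}(\gO_Z, p^*\gF) = 0$ from the exactness established in (a), and then reads off (b) from the locally free resolution $\gK_\dt \pil \gO_Z$. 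This is valid but marginally less elementary: the implication ``(a) $\Rightarrow$ Tor-vanishing'' tacitly resolves $\gF$ by flat quasi-coherent sheaves on $\pW$, whereas the paper's argument is a one-line local computation needing nothing beyond freeness over the base ring $A$. In return, your framing makes explicit that (b) is a flatness statement and ties the two parts together, which is conceptually tidy.
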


\begin{proof}
Let $p^\prime$ be the composition 
\begin{equation} \label{BevisLigPp}
Z \inpil \VV(V \te W^* \te \gO_\pW) \pil \pW. 
\end{equation}
Then $\gO_Z \te p^*\gF$ identifies as $p^{\prime *} \gF$. Since
$Z \mto{p^\prime} \pW$ is an affine bundle, the pullback $p^{\prime *}$
is exact. 

For part b. note that locally on $U = \Spec \, A \, \sus \pW$, the resolution
$\gO_Z \vpil \gK_\dt$ on $\VV$ is just a Koszul resolution of a 
polynomial ring as a quotient of a larger polynomial ring:
\[ A[x_1, \ldots, x_r] \vpil A[x_1, \ldots, x_n] 
\vpil \langle x_{r+1}, \ldots, x_n \rangle \te A[x_1, \ldots, x_n] \vpil \cdots . \]
Since $\gF_{|U} = \tilde{M}$ for an $A$-module $M$, the complex in b.
is locally just tensoring the above with $- \te_A M$. This is exact since
the above is an exact sequence of free $A$-modules.
\end{proof}

By the projection formula the global section of $p^*\gF(-i)$ is
\[ \Gamma(\VV, p^* \gF(-i)) = \Gamma(\pW, p_* p^* \gF(-i)) = 
S \te \Gamma(\pW, \gF(-i)). \]
Write $\Gamma(\gF(-i))$ for short for the latter global sections.
The complex of global sections $\Gamma(\VV, \gK_\dt \te p^* \gF)$ is 
\begin{align} \label{BevisLigd} 
S \te \Gamma(\gF) \vpil V \te S(-1) \te \Gamma(\gF(-1))
\vpil & \wedge^2 V \te S(-2) \te \Gamma(\gF(-2)) \vpil \cdots \\
\vmto{d} & \wedge^i V \te S(-i) \te \Gamma(\gF(-i)) \vpil \cdots . 
\notag
\end{align}
The module multiplication $W \te \Gamma(\gF(-i)) \pil \Gamma(\gF(-i+1))$
gives a comultiplication 
$\Gamma(\gF(-i)) \mto{\Delta} W^*\te \Gamma(\gF(-i+1))$ and the differential
$d$ is given by 
\begin{align*} \wedge^i V \te \kk \te \Gamma(\gF(-i)) \mto{\delta \te 
{\bfen} \te \Delta} 
& \wedge^{i-1} V \te V 
\te \kk \te W^* \te \Gamma(\gF(-i+1)) 
\\ = & \wedge^{i-1} V \te S_1 \te \Gamma(\gF(-i+1)). 
\end{align*}
Recall the graded global section module $\Gamma_*(\pW, \gF)$. Applying
the functor $\bR$ of (\ref{ExtLigBR}) of Section \ref{ExtalgSec}
we get the linear complex $\bR \circ \Gamma_*(\gF):$
\begin{align*} \cdots \vpil \oe(-1) \te \Gamma(\gF(1))
\vpil \oe \te \Gamma(\gF) & \vpil \oe(1) \te \Gamma(\gF(-1) \vpil 
\cdots \\ & \vpil \oe(i) \te \Gamma(\gF(-i)) \vpil \cdots . 
\end{align*}
Zipping this complex with the vector space $V$ we get
\[ S \te \Gamma(\gF) \vpil V \te S(-1) \te \Gamma(\gF(-1)) \vpil \cdots
\vmto{} \wedge^i V \te S(-i) \te \Gamma(\gF(-i)) \vpil \cdots  \]
and the differentials are identified as in 
(\ref{BevisLigd}). So we obtain the following.

\begin{lemma} \label{BevisLemGVW} The functors 
$\Gamma(\VV,\gK_\dt \te p^*(-))$ 
and $\funTo{V}_W \circ \bR \circ \Gamma_*$ from quasi-coherent sheaves 
on $\pW$ to linear  $S$-complexes, are the same.
\end{lemma}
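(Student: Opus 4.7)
The plan is to verify the lemma by matching the two functors term-by-term and differential-by-differential; naturality in $\gF$ will then be manifest since both sides are compositions of evidently functorial operations.

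First I would compute the $i$-th term of $\Gamma(\VV, \gK_\dt \te p^*\gF)$. Since $p\colon \VV = \pW \times X \to \pW$ is a trivial affine bundle with $p_*\gO_\VV = \opW \te_\kk S$, the projection formula yields
\[ \Gamma(\VV, \wedge^i V \te p^*\gF(-i)) = \wedge^i V \te S(-i) \te_\kk \Gamma(\pW, \gF(-i)), \]
where the grading shift $S(-i)$ reflects the $\kk^*$-weight $i$ of $\wedge^i V$. For the other functor, $\bR(\Gamma_*\gF)$ has $i$-th term $\oe(i) \te \Gamma(\pW, \gF(-i))$, and by definition $\funTo{V}_W(\oe(i)) = \wedge^i V \te S(-i)$, so the terms agree.

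Next I would match the differentials. On the geometric side the Koszul differential contracts with the canonical section $t \in H^0(\VV, V^* \te \gO_\VV(1))$ cutting out $Z \sus \VV$, itself obtained from the composition
\[ V \te \gO_\VV(-1) \hookrightarrow V \te W^* \te \gO_\VV \pil \gO_\VV, \]
where the second map evaluates the tautological element of $V \te W^* \sus S_1$. On global sections this contraction factors as
\[ \wedge^i V \te \Gamma(\gF(-i)) \mto{\delta \te \Delta} \wedge^{i-1} V \te V \te W^* \te \Gamma(\gF(-i+1)) = \wedge^{i-1} V \te S_1 \te \Gamma(\gF(-i+1)), \]
where $\delta$ is the coproduct on $\wedge^\dt V$ and $\Delta\colon \Gamma(\gF(-i)) \to W^* \te \Gamma(\gF(-i+1))$ is dual to the module action $W \te \Gamma(\gF(-i)) \to \Gamma(\gF(-i+1))$. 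On the algebraic side the differential of $\bR(\Gamma_*\gF)$ from (\ref{ExtLigBR}), $u \te m \mapsto \sum_j u y_j^* \te y_j m$, uses the same $\Delta$; applying $\funTo{V}_W$ then inserts $\delta$ on $\wedge^\dt V$ together with the Cauchy inclusion $V \te W^* \hookrightarrow S_1$. These two compositions are literally identical.

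The main obstacle will be writing down the canonical section $t$ and its global-section contraction cleanly enough to reveal the $\delta \te \Delta$ factorization—in particular, identifying the tautological coordinate element of $V \te W^* \te S$ with the section of $V \te W^* \te \gO_\VV$ coming from (\ref{RegLigVW}), and tracking the sign conventions implicit in the exterior coproduct $\delta$. Once this is unwound, functoriality in $\gF$ is immediate on both sides.
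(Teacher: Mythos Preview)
Your proposal is correct and follows essentially the same approach as the paper: the paper's argument is precisely the discussion immediately preceding the lemma, computing both complexes term-by-term via the projection formula and then identifying the differentials as the composite $\delta \te \Delta$ followed by the Cauchy inclusion $V \te W^* \hookrightarrow S_1$. Your write-up is, if anything, slightly more explicit about the role of the canonical section $t$ and the projection formula than the paper's, but the content is the same.
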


Let
$C(\qcohpw)$ be the category of complexes of quasicoherent sheaves on 
$\pW$. The functors in the lemma above take an object here to a double
complex of free $S$-modules. Taking the total complex, $\tot$, 
of this we get a complex in $C(S-\free)$, the category of complexes
of free $S$-modules.

\begin{corollary} \label{BevisCorTot}
The functors $\tot \circ \Gamma(\VV,\gK_\dt \te p^*(-))$ and
$\tot \circ \funTo{V}_W \circ \bR \circ \Gamma_*$ from
\[ C(\qcohpw) \pil C(S-\free) \]
are equal.
\end{corollary}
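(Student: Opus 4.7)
The plan is to bootstrap Lemma \ref{BevisLemGVW} from single quasi-coherent sheaves to bounded-below complexes by a naturality argument, and then invoke the standard fact that totalization commutes with morphisms of double complexes.

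Given a complex $\gF^\dt$ in $C(\qcohpw)$ with differentials $\partial^i : \gF^i \pil \gF^{i+1}$, both functors produce a double complex. On the left, the bidegree $(p,q)$ term is $\Gamma(\VV, \wedge^p V \te \gO_\VV(-p) \te p^*\gF^q)$, with horizontal differential the Koszul-type map of (\ref{BevisLigd}) applied to $\gF^q$, and vertical differential induced by $\partial^q$ after applying the exact functor $\gO_Z \te p^*(-)$ (exactness supplied by Lemma \ref{BevisLemEksakt}a, though here we only need functoriality). On the right, the bidegree $(p,q)$ term is $\wedge^p V \te S(-p) \te \Gamma(\pW, \gF^q(-p))$, with horizontal differential coming from the zipping of $\bR \circ \Gamma_*(\gF^q)$ and vertical differential induced by $\partial^q$. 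By Lemma \ref{BevisLemGVW}, for each fixed $q$ the $q$-th row on the left equals the $q$-th row on the right as $S$-complexes, termwise and as complexes.

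Next I would check naturality in the sheaf argument, namely that the identification of Lemma \ref{BevisLemGVW} commutes with the maps induced by $\partial^q$. Inspecting the proof of that lemma, both sides identify the bidegree $(p,q)$ term with $\wedge^p V \te S(-p) \te \Gamma(\pW, \gF^q(-p))$, and the map induced by $\partial^q$ on either side is simply $\text{id}_{\wedge^p V \te S(-p)} \te \Gamma(\partial^q(-p))$ after these identifications. Hence the vertical differentials agree as well, and we obtain an equality of double complexes of free $S$-modules, not merely an isomorphism.

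Finally, since $\tot$ (whichever convention, direct sum or product, chosen consistently) is a functor on double complexes of $S$-modules sending equal double complexes to equal total complexes, the totalizations coincide in $C(S\text{-}\free)$. The only genuine work sits in the naturality verification of the previous paragraph; once the explicit form of the differential $d$ in (\ref{BevisLigd}) is written as $\delta \te \een \te \Delta$, naturality in $\gF$ is visible from the expression, so no real obstacle remains and the corollary follows.
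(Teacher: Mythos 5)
Your proof is correct and takes essentially the same route the paper intends: the paper states the corollary as an immediate consequence of Lemma \ref{BevisLemGVW} (which already asserts equality, not just natural isomorphism, of the two functors on objects), so the only content is the routine observation that applying termwise-equal functors to a complex gives the same double complex, and that $\tot$ preserves equality. Your explicit naturality check is a harmless elaboration of this.
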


\medskip

Recall that the category of quasi-coherent sheaves on a noetherian scheme
has enough injectives, \cite[Ex.III.3.6]{Ha}.

\begin{lemma} Let $\gI$ be an injective quasi-coherent sheaf on $\pW$.
\begin{itemize}
\item[a.] $\gI(n)$ is injective.
\item[b.] $\gO_Z \te p^* \gI$ is a $q_*$-acyclic sheaf.
\end{itemize}
\end{lemma}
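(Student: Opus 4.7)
The plan is to prove (a) directly by adjunction and then use (a) together with the Koszul-type resolution $\gK_\dt \to \gO_Z$ from Lemma \ref{BevisLemEksakt} to deduce (b).

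For (a), I will observe that twisting by $\opW(n)$ is an auto-equivalence of the category of quasi-coherent sheaves on $\pW$, with inverse given by twisting by $\opW(-n)$. Concretely, the adjunction $\Hom_{\opW}(\gF, \gI(n)) \iso \Hom_{\opW}(\gF(-n), \gI)$ together with the exactness of the twist $\gF \mapsto \gF(-n)$ (which holds because $\opW(-n)$ is invertible, hence flat) reduces the exactness of $\Hom_{\opW}(-,\gI(n))$ to that of $\Hom_{\opW}(-,\gI)$. This step is routine and presents no obstacle.

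For (b), the key idea is to resolve $\gO_Z \te p^*\gI$ by $q_*$-acyclic sheaves and then exploit the length bound of the resolution. Tensoring the resolution $\gK_\dt \to \gO_Z$ with $p^*\gI$ yields, by Lemma \ref{BevisLemEksakt}(b), a bounded exact sequence
\[ 0 \to \wedge^n V \te p^*\gI(-n) \to \cdots \to V \te p^*\gI(-1) \to p^*\gI \to \gO_Z \te p^*\gI \to 0. \]
I will verify that each term $\wedge^i V \te p^*\gI(-i)$ is $q_*$-acyclic. Since $X$ is affine, flat base change along the projection $q : X \times \pW \to X$ gives $R^j q_*(p^*\gG) = \gO_X \te_{\kk} H^j(\pW, \gG)$ for any quasi-coherent $\gG$ on $\pW$; applying this with $\gG = \gI(-i)$, which is injective by part (a), yields the vanishing of $H^j(\pW, \gI(-i))$ for all $j > 0$.

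To conclude, since $\gK_\dt \te p^*\gI$ is a bounded resolution of $\gO_Z \te p^*\gI$ by $q_*$-acyclic sheaves, supported in cohomological degrees $-n, \ldots, 0$ (placing $\gK_0 \te p^*\gI$ in degree zero), the derived pushforward $\RR q_*(\gO_Z \te p^*\gI)$ is represented in the derived category by the complex $q_*(\gK_\dt \te p^*\gI)$, which has no cohomology in positive degrees. Thus $R^j q_*(\gO_Z \te p^*\gI) = 0$ for all $j > 0$, establishing the claimed $q_*$-acyclicity. The only subtle but standard point is the flat base change justification together with the compatibility of the derived-functor computation with a bounded resolution by acyclic objects; no serious obstacle is expected here.
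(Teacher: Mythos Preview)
Your proof is correct, but for part (b) you take a genuinely different route from the paper. The paper argues directly: since $X$ is affine, $R^iq_*(\gO_Z \te p^*\gI)$ is governed by $H^i(\VV, \gO_Z \te p^*\gI) = H^i(Z, p^{\prime *}\gI)$; then, because $p^\prime : Z \to \pW$ is affine, this equals $H^i(\pW, \gS(\gI))$, and the vanishing follows from Lemma~\ref{RegLemReg}(a) applied to the injective (hence $0$-regular in every twist) sheaf $\gI$. You instead resolve $\gO_Z \te p^*\gI$ by the Koszul complex $\gK_\dt \te p^*\gI$, observe via flat base change along $X \to \Spec\,\kk$ that each term $p^*(\gI(-i))$ is $q_*$-acyclic, and conclude by the standard fact that a bounded acyclic resolution computes the derived functor, the resulting complex being concentrated in nonpositive degrees. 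Your argument is slightly more self-contained in that it avoids invoking Lemma~\ref{RegLemReg}(a) (which was stated for coherent sheaves), while the paper's approach is more direct and also exhibits what the higher direct images actually are, namely $H^i(\pW,\gS(\gI))$, which fits the narrative of Section~\ref{RegdimSec}.
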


\begin{proof} Part a. is clear. 
Since $Y$ is affine, the quasi-coherent sheaf 
$\RR^i q_* (\gO_Z \te p^* \gI)$ is the sheafification of 
$H^i(\VV, \gO_Z \te p^*\gI)$. By \cite[Lemma 2.10]{Ha} we can compute
this as the cohomology $H^i(Z, p^{\prime *}\gI)$ on $Z$,
where
$p^\prime$ is the restriction of $p$, see
(\ref{BevisLigPp}).
Since $p^\prime$ is an affine map, by the spectral 
sequence associated to the composition
$Z \mto{p^\prime} \pW \pil \Spec \, \kk$ we have 
\[ H^i(Z, p^{\prime *} \gI) = H^i(\pW, p^\prime_* p^{\prime *} \gI). \]
By the projection formula 
\[ p^\prime_* p^{\prime *} \gI = \gI \te \Sym(V \te \gQ) = \gS(\gI).\]
Since $\gI(r)$ is $0$-regular for all $r$,
by Lemma \ref{RegLemReg}.a. all the higher cohomology of
$\gS(\gI)$ will vanish. 
\end{proof}

Now we have the following.

\begin{fact} \label{ProofFactInj} 
Let $\gI^\dt$ be an injective resolution of $\gF^\dt$.
The Tate resolution $\TT(\gF^\dt)$ may be defined as the
minimal complex homotopy equivalent to 
$\tot \circ \bR \circ \Gamma_*(\gI^\dt)$. This is Corollary 3.2.3
of the unpublished article \cite{FlDesc}. See also Proposition 1.6.1.
therein.
\end{fact}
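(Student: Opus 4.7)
The plan is to verify the Fact in three steps: (i) the construction $\tot \circ \bR \circ \Gamma_*(\gI^\dt)$ is independent, up to homotopy of complexes of free $E$-modules, of the choice of injective resolution $\gI^\dt$; (ii) the minimal complex homotopy equivalent to it has term spaces given by the hypercohomology modules $\HH^j(\pW, \gF^\dt(p-j))$; (iii) this minimal complex is acyclic and hence a Tate resolution.

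For (i), observe that $\bR$ is an exact $\kk$-linear functor from graded $\Sym(W)$-modules to linear complexes of free $E$-modules, and in particular sends homotopies to homotopies. Applied termwise to $\Gamma_*(\gI^\dt)$ and totalized, it yields a complex in $K(E\text{-free})$ well-defined up to homotopy, since any two injective resolutions of $\gF^\dt$ are homotopy equivalent. The existence and uniqueness of a minimal complex in each homotopy class of free $E$-complexes is standard.

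For (ii), the total complex has in cohomological position $p$ the free module
\[ T^p = \bigoplus_{j \in \hele} \oe(j-p) \te_\kk \Gamma(\pW, \gI^j(p-j)). \]
Filter the double complex by the $\gI^\dt$-index. Computing vertical cohomology first collapses the column of fixed twist $r = p-j$ to
\[ E_1^{r,j} = \oe(-r) \te_\kk \HH^j(\pW, \gF^\dt(r)) = \oe(j-p) \te_\kk \HH^j(\pW, \gF^\dt(p-j)), \]
which matches the predicted terms of $\TT(\gF^\dt)$ in (\ref{MainLigTp}). The horizontal differentials induced on $E_1$ are those coming from the $\Sym(W)$-action on hypercohomology---precisely the comultiplication $\Delta$ used in defining $\bR$---and these are the generic differentials linking adjacent linear strands of a Tate resolution. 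Passing to the minimal complex cancels all free summands of $T^p$ not generated by hypercohomology classes, leaving exactly these term spaces.

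For (iii), acyclicity is the hard part. By Lemma \ref{MainLemTTrel} and the triangulated structure of the map $\gF^\dt \mapsto \tot \circ \bR \circ \Gamma_*(\gI^\dt)$, via distinguished triangles one reduces to the case of a single coherent sheaf $\gF = \gF^\dt$ in cohomological degree zero. For such $\gF$ the total complex $T^\dt$ is a BGG-style assembly of the strands $\bR(\HH^j_*(\pW, \gF))$, whose acyclicity is the essence of the EFS construction of Tate resolutions \cite{EFS}. The main obstacle is passing from the sheaf case to bounded complexes and ensuring the spectral sequence of step (ii) degenerates sufficiently so that the minimal model is literally acyclic---this is exactly the content of \cite[Prop.~1.6.1, Cor.~3.2.3]{FlDesc}, on which the Fact draws.
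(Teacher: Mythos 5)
The paper gives no proof of this Fact: it simply cites \cite{FlDesc}, Corollary~3.2.3 and Proposition~1.6.1. Your proposal therefore contains strictly more content than the paper's own treatment. The overall architecture — invariance under choice of injective resolution, identifying the terms of the minimal complex via a degeneration argument, and reducing acyclicity to the single-sheaf case via distinguished triangles — is a reasonable reconstruction of what the cited proof should look like, and in spirit it matches the role the Fact plays in Section~\ref{BeviszipSec}.

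That said, two points need correction or tightening. First, a terminological error in step (ii): to make the $\gI^\dt$-differential the ``vertical'' one you compute first, you must filter by the \emph{linear-strand} index, not by the $\gI^\dt$-index as you write; the $E_1$ computation you then perform is correct. Second, and more substantively, the assertion that ``passing to the minimal complex cancels all free summands of $T^p$ not generated by hypercohomology classes'' is the crux and is not justified. The clean route is to observe that for a complex $C^\dt$ of free graded $E$-modules admitting a decomposition $C^\dt \iso C^\dt_{\min}\oplus D^\dt$ with $D^\dt$ contractible, the terms of $C^\dt_{\min}$ are read off from $H^\dt(C^\dt\te_E\kk)$; here the reduced differential kills the $\bR$-direction (it lives in $\mathfrak m_E$), leaving only the $\gI^\dt$-direction, whose cohomology is hypercohomology, which yields exactly formula (\ref{MainLigTp}). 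But the existence of such a decomposition, and the well-posedness of $\tot$ itself — the double complex is unbounded in the linear-strand direction and in general the columns $\Gamma_*(\gI^j)$ are non-coherent, so one must argue convergence or choose sum versus product totalization carefully — is precisely the technical content the paper delegates to \cite{FlDesc}. Your step (iii) then cites the same source for acyclicity, so in the end your argument, like the paper's, rests on \cite{FlDesc} for the hard part; the value added is a correct picture of where the term spaces (\ref{MainLigTp}) come from, provided the filtration and the minimal-model criterion are stated precisely.
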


\begin{proof}[Proof of Theorem \ref{MainTheMain}]

Let $\gF^\dt$ be a bounded complex of coherent sheaves and 
$\gF^\dt \mto{\phi} \gI^\dt$
an injective resolution of quasi-coherent sheaves.
We get a morphism of double complexes (the first one with only one column)
\[ \gO_Z \te p^* \gF^\dt \vpil \gK_\dt \te p^* \gF^\dt, \]
and then morphisms
\[ \begin{CD} \gO_Z \te p^* \gF^\dt @<<<  \tot (\gK_\dt \te p^* \gF^\dt) \\
    @. @VVV \\
  @. \tot(\gK_\dt \te p^* \gI^\dt). 
\end{CD} \]
The horizontal map is a quasi-isomorphism by Lemma \ref{BevisLemEksakt} and 
the Acyclic Assembly Lemma
2.7.3 part 3. of \cite{Weibel}. The vertical map is a quasi-isomorphism
by applying the same lemma part 4. to the cone $\gK_\dt \te cone(\phi)$.
(Note that direct sums and products are the same here since a finite
number of sheaves are involved.)
Since the lower total complex consist of $q_*$-acyclic objects, it
can be used to calculate the derived complex $\RR q_*$ of the upper
left complex. 
Hence 
\[ \RR q_* (\gO_Z \te p^*\gF) = \tot \circ q_* (\gK_\dt \te p^* \gI^\dt) \]
and so
\[ \Gamma(X, \RR q_* (\gO_Z \te p^* \gF)) = \tot \circ
\Gamma(\VV, \gK_\dt \te p^* \gI^\dt). \]
By Corollary \ref{BevisCorTot}
\begin{align*}
\tot \circ \Gamma (\VV, \gK_\dt \te p^* \gI^\dt) 
& = \tot \,  (\funTo{V}_W (\bR \circ \Gamma_*(\gI^\dt)) \\
& = \funTo{V}_W (\tot \circ \bR \circ \Gamma_*(\gI^\dt)).
\end{align*}

Now $\funTo{V}_W$ is a functor between additive categories 
that takes cones to  cones.
It is a general fact that such functors take homotopy equivalences
to homotopy equivalences. 
Then by the Fact \ref{ProofFactInj} 
above the latter complex is homotopy equivalent
to $\funTo{V}_W(\TT(\gF^\dt))$ and this together with Lemma \ref{RegLemHyper}
concludes the proof of Theorem \ref{MainTheMain}.
\end{proof}

\bibliographystyle{plain}
\bibliography{Bibliography}

\end{document}